\numberwithin{equation}{section}
\DeclareMathOperator{\adiag}{anti-diag}
\DeclareMathOperator{\adm}{Adm}
\DeclareMathOperator{\Aut}{Aut}
\DeclareMathOperator{\codim}{codim}
\DeclareMathOperator{\coker}{Coker}
\DeclareMathOperator{\diag}{diag}
\DeclareMathOperator{\GL}{GL}
\DeclareMathOperator{\GSp}{GSp}
\DeclareMathOperator{\id}{Id}
\DeclareMathOperator{\INV}{INV}
\DeclareMathOperator{\LGSp}{LGSp}
\DeclareMathOperator{\pLGSp}{L^{+}GSp}
\DeclareMathOperator{\MT}{MT}
\DeclareMathOperator{\ord}{ord}
\DeclareMathOperator{\sloc}{loc}
\DeclareMathOperator{\dR}{DR}
\DeclareMathOperator{\DR}{H^1_{DR}}
\DeclareMathOperator{\F}{F}
\DeclareMathOperator{\R}{R}
\DeclareMathOperator{\sign}{sign}
\DeclareMathOperator{\Spec}{Spec}
\DeclareMathOperator{\Z}{Z}
\newcommand{\bo}{\mathbf{1}}
\newcommand{\bM}{\mathbf{M}}
\newcommand{\bx}{\mathbf{x}}
\newcommand{\cA}{\mathcal{A}}
\newcommand{\cB}{\mathcal{B}}
\newcommand{\cF}{\mathcal{F}}
\newcommand{\cl}{\ell}
\newcommand{\co}{\mathcal{O}}
\newcommand{\DD}{\mathbb{D}}
\newcommand{\FF}{\mathbb{F}}
\newcommand{\RR}{\mathbb{R}}
\newcommand{\ZZ}{\mathbb{Z}}
\newcommand{\Adm}{\adm (\mu )}
\newcommand{\Fl}{\mathcal{F} \hspace{-0,7mm} \ell}
\newcommand{\pot}[1]{ [\hspace{-0,5mm}[ {#1} ]\hspace{-0,5mm}] }
\newcommand{\rpot}[1]{ (\hspace{-0,7mm}( {#1} )\hspace{-0,7mm}) }
\newcommand{\loc}{\bM_{I,\FF}^{\sloc}}
\newcommand{\Lra}{\Leftrightarrow}
\newcommand{\teil}[1]{(\hspace{-0,3mm}{#1}\hspace{-0,3mm})}
\newcommand{\mycomment}[1]{}
\newenvironment{myeqnarray}{
 
 \begin{displaymath}
 \begin{array}{rcll}}{
 \end{array}
 \end{displaymath}}
 \newenvironment{equivarray}{
 
 \begin{displaymath}
 \begin{array}{lc}}{
 \end{array}
 \end{displaymath}}
\newenvironment{twoeqnarray}{
 
 \begin{displaymath}
 \begin{array}{rclrcl}}{
 \end{array}
 \end{displaymath}}
\newtheorem{theorem}{Theorem}[section]
\newtheorem{lemma}[theorem]{Lemma}
\newtheorem{corollary}[theorem]{Corollary}
\newtheorem{proposition}[theorem]{Proposition}
\theoremstyle{definition}
\newtheorem{definition}[theorem]{Definition}
\theoremstyle{remark}
\newtheorem*{remark}{Remark}
\title[The $p$-rank stratification on $\cA_I$]{The $p$-rank stratification on the Siegel moduli space with Iwahori level structure}
\author[P. Hamacher]{by Paul Hamacher}
\begin{document}

 \address{Paul Hamacher \\ Mathematisches Institut der Universit\"at Bonn\\ Endenicher Allee 60 \\ 53115 Bonn \\ Germany}
 \email{hamacher@math.uni-bonn.de}

 \setcounter{section}{-1}

 \begin{abstract}
  Our concern in this paper is to describe the $p$-rank statification on the Siegel moduli space with Iwahori level structure over fields of positive characteristic. We calculate the dimension of the strata and describe the closure of a given stratum in terms of $p$-rank strata. We also examine the relationship between the $p$-rank stratification and the Kottwitz-Rapoport stratification.
 \end{abstract}
 
 \maketitle
 
 \section{Introduction}

 Fix a prime $p$, a positive integer $g$ and an algebraic closure $\FF$ of $\FF_p$. We denote by $\cA_g$ the moduli space of principally polarized abelian varieties of dimension $g$ over $\FF$ and by $\cA_I$ the moduli space of principally polarized abelian varieties of dimension $g$ over $\FF$ with Iwahori level structure at $p$. Let $\pi: \cA_I \longrightarrow \cA_g$ be the canonical projection (see subsection \ref{modsp} for details).
 
 We denote by $\cA_g^{(d)}$ the subset of $\cA_g$ of points that correspond to an abelian variety of $p$-rank $d$. Koblitz showed in his paper \cite{Ko} that these sets form a stratification on $\cA_g$, more precisely:
 
 \begin{theorem} \label{koblitz}
  Let $d \leq g$ be a nonnegative integer. \\ \smallskip
 \emph{(1)} $\cA_g^{(d)}$ is locally closed \\ \smallskip
 \emph{(2)} $\overline{\cA_g^{(d)}} = \bigcup_{d' \leq d} \cA_g^{(d')}$ \\ \smallskip
 \emph{(3)} $\cA_g^{(d)}$ is equidimensional of codimension $g-d$
 \end{theorem}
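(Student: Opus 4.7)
The plan is to realize the $p$-rank as the stable rank of a semilinear Frobenius operator, so that (1) and (2) reduce to standard rank-stratification arguments while (3) is treated via Serre--Tate deformation theory. Let $\pi : \mathcal{X} \to \cA_g$ be the universal abelian scheme and set $\cF := R^1\pi_* \co_{\mathcal{X}}$, a locally free sheaf of rank $g$ equipped with the $p$-linear Frobenius endomorphism $F$ induced from the absolute Frobenius of $\mathcal{X}$. At a geometric point $s$, the $p$-rank of $A_s$ equals the stable rank of $F$ on $\cF_s$, and $g$ iterations suffice since $\dim \cF_s = g$. Over any open subset trivializing $\cF$, let $H$ be the matrix of $F$ in a chosen basis; then the $g$-fold composite $M := H \cdot H^{(p)} \cdot H^{(p^2)} \cdots H^{(p^{g-1})}$ is a $g \times g$ matrix of regular functions whose pointwise rank equals the $p$-rank.

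The closed sublocus $\cA_g^{(\leq d)} := \bigcup_{d' \leq d} \cA_g^{(d')}$ is then cut out on each chart by the vanishing of the $(d+1) \times (d+1)$ minors of $M$, so $\cA_g^{(d)} = \cA_g^{(\leq d)} \setminus \cA_g^{(\leq d-1)}$ is locally closed, proving (1). Part (2) follows from (3): granting equidimensionality of codimension $g-d$, the stratum $\cA_g^{(d)}$ is open and dense in the closed set $\cA_g^{(\leq d)}$, so the two have the same closure. For (3), non-emptiness of $\cA_g^{(d)}$ is witnessed by $E_{\mathrm{ord}}^d \times E_{\mathrm{ss}}^{g-d}$, the product of $d$ ordinary and $g-d$ supersingular elliptic curves equipped with the product polarization. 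For the codimension I would work formally-locally at a geometric point $x$ of $p$-rank $d$: the Serre--Tate theorem identifies $\mathrm{Spf}\,\widehat{\co}_{\cA_g, x}$ with the formal deformation space of the principally polarized $p$-divisible group $H_x := A_x[p^\infty]$, which is formally smooth of dimension $g(g+1)/2$. Over $\bar\FF_p$ this $H_x$ decomposes non-canonically as $(\QQ_p/\ZZ_p)^d \oplus \mu_{p^\infty}^d \oplus G_0$ with $G_0$ local--local, principally polarized, of height $2(g-d)$. The stratum $\cA_g^{(d)}$ corresponds inside this deformation to those lifts that preserve the rank-$d$ étale quotient; via Grothendieck--Messing this cuts out a formally smooth closed subscheme of codimension $g-d$, realized as the vanishing locus of the obstruction class for lifting the extension between the multiplicative and local--local summands.

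The main obstacle is this last codimension calculation. A naive determinantal bound on $\{\mathrm{rk}\,M \leq d\}$ would predict codimension $(g-d)^2$, whereas the correct answer is $g - d$. The discrepancy reflects the $p$-semilinearity of $F$: in suitable local coordinates, $H$ can be normalized so that only $g-d$ entries genuinely control the drop in the stable rank, and these entries correspond to independent deformation directions arising from the Ext between the multiplicative and local--local summands of $H_x$. Equidimensionality of $\cA_g^{(d)}$ then propagates globally from the formal-local picture, since the isomorphism class of $H_x$ depends only on the $p$-rank of $x$ (together with the local--local isogeny type) and the sub-deformation described above is uniformly formally smooth in $x$.
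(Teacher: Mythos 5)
This statement is quoted in the paper as background from Koblitz's article \cite{Ko}; the paper gives no proof of it, so there is nothing internal to compare against and your argument has to stand on its own. Your overall strategy (Hasse--Witt matrix for (1), formal deformation theory for (3)) is indeed the classical route, but as written it has two genuine gaps. The first is the deduction of (2) from (3). You assert that equidimensionality makes $\cA_g^{(d)}$ ``open and dense in the closed set $\cA_g^{(\leq d)}$,'' but density here \emph{is} statement (2): equidimensionality of the individual strata does not rule out an irreducible component of $\cA_g^{(\leq d)}$ lying entirely inside $\cA_g^{(\leq d-1)}$, i.e.\ a piece of low $p$-rank that is not a specialization of anything of $p$-rank $d$. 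To exclude this you need information at points of $p$-rank $d'<d$ --- either a Krull-type bound showing every component of $\cA_g^{(\leq d)}$ has codimension $\leq g-d$ (which then contradicts containment in the codimension-$(g-d+1)$ locus $\cA_g^{(\leq d-1)}$), or a deformation argument raising the $p$-rank from $d'$ to exactly $d$. Your formal-local analysis is carried out only at points of $p$-rank exactly $d$, so it says nothing about whether such points accumulate at points of lower $p$-rank.

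The second gap is that the crucial refinement from $(g-d)^2$ to $g-d$ equations is asserted rather than proved. The sentence ``$H$ can be normalized so that only $g-d$ entries genuinely control the drop in the stable rank'' is precisely the technical heart of Koblitz's proof (a careful normalization of the Hasse--Witt matrix exploiting $p$-semilinearity, or equivalently the Cartier/display-theoretic computation of Norman--Oort), and nothing in your write-up substantiates it. Likewise, the identification of $\cA_g^{(\leq d)}$ inside the Serre--Tate deformation space with ``lifts preserving the rank-$d$ \'etale quotient,'' and the claim that Grothendieck--Messing exhibits this as formally smooth of codimension $g-d$, are the statements one actually has to prove; note also that the $p$-rank is a pointwise invariant, so saying what ``$p$-rank $\leq d$'' means for the universal deformation over an Artinian base already requires care. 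Supplying the explicit local equations (as Koblitz does) would simultaneously close both gaps, since the same computation at a point of $p$-rank $d'<d$ yields both the codimension-$\leq g-d$ bound needed for (2) and the formal smoothness needed for (3).
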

 
 Now consider the preimages $\cA_I^{(d)} := \pi^{-1}(\cA_g^{(d)})$. These sets form a decomposition of $\cA_I$ into locally closed subsets but the closure of $\cA_I^{(d)}$ can in general not be written as union of sets $\cA_I^{(d')}$. We will nevertheless call them $p$-rank strata and refer to this decomposition as the $p$-rank stratification of $\cA_I$. In this paper we deduce statements on the $p$-rank stratification on $\cA_I$ similar to those of Theorem \ref{koblitz}.
 
 The dimension of the stratum corresponding to $p$-rank zero was already calculated by G\"ortz and Yu in section 8 of \cite{GY2}. Our proof of the dimension formula for arbitrary $p$-rank (see below) is a generalization of theirs. 
 
 Our approach  is via the study of the Kottwitz-Rapoport stratification (KR stratification) on $\cA_I$, which is given by the relative position of the chain of de Rham cohomology groups and the chain of Hodge filtrations associated to a point of $\cA_I$. We use the result of Ng\^o and Genestier which states that the $p$-rank on a KR stratum is constant and thus the KR stratification is a refinement of the $p$-rank stratification. The KR strata are in canonical one-to-one correspondence with a subset $\Adm$ of the extended affine Weyl group of $\GSp_{2g}$ (defined in section \ref{GSp}). Most importantly the dimension, relative position and the $p$-rank on KR strata can be expressed in combinatorial or numerical terms on $\Adm$ (for details, see section \ref{sectKR}).
 
 Denote the integer part of a number $x$ by $\lfloor x \rfloor$. The following theorem summarizes the main results of this paper.
 
 \begin{theorem} \label{results}
 Let $d\leq g$ be a nonnegative integer. Denote by $M^{(d)}$ the subset of elements $x \in \Adm$ which correspond to a KR stratum $\cA_x$ which is top-dimensional inside $\cA_I^{(d)}$. \\ \smallskip
 \emph{(1)} $ \codim \cA_I^{(d)} = \lfloor \frac{g-d}{2} + \frac{1}{2} \rfloor$ \\ \smallskip
 \emph{(2a)} If $g-d$ is even,
  \[
   \overline{\cA_I^{(d)}} = \bigcup_{d'\leq d} \cA_I^{(d')}.
  \]
 \emph{(2b)} If $g-d \not= 1$ is odd,
  \[
   \overline{\cA_I^{(d)}} = \bigcup_{d'\leq d} \cA_I^{(d')} \setminus \bigcup_{x \in M^{(d'')} \atop d''<d,\, 2 | g-d''} \cA_x.
  \]  
  \emph{(2c)} Using the standard embedding $W\subset S_{2g}$ (cf.\ section \ref{GSp}), we have 
  \[
   \overline{\cA_I^{(g-1)}} = \bigcup_{x=t^{x_0}w\in \Adm \atop w(\{1,\ldots,g\}) = \{1,\ldots,g\}} \cA_x.
  \]
 \end{theorem}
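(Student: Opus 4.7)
The plan is to translate the theorem into combinatorics on the admissible set $\Adm$, using two inputs from section \ref{sectKR}: the theorem of Ng\^o and Genestier provides a well-defined $p$-rank function $r : \Adm \to \{0,\ldots,g\}$ with $\cA_I^{(d)} = \bigsqcup_{r(x) = d} \cA_x$, and the KR strata satisfy the closure relation $\overline{\cA_x} = \bigsqcup_{y \leq x} \cA_y$ for the Bruhat order on $\Adm$, with $\dim \cA_x = \ell(x)$. Setting $N_d := r^{-1}(d) \subset \Adm$, one has
\[
 \overline{\cA_I^{(d)}} = \bigsqcup_{y \,\leq\, x \text{ for some } x \in N_d} \cA_y,
\]
so the entire statement reduces to identifying the Bruhat-downset of $N_d$ in $\Adm$ and computing $\max_{x \in N_d} \ell(x)$.

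For part (1), I would explicitly construct $x_{\max} \in N_d$ of length $\dim \cA_I - \lfloor (g-d)/2 + 1/2 \rfloor$, built from a suitable translation together with an arrangement of simple reflections chosen to realize the required $p$-rank while leaving exactly $\lfloor (g-d)/2 + 1/2 \rfloor$ simple reflections ``unused''. A matching upper bound $\ell(x) \leq \ell(x_{\max})$ on all of $N_d$ would then be a short parity argument based on the numerical formulae for $\ell$ and $r$ from section \ref{sectKR}. The case $d = 0$ should recover the result of G\"ortz and Yu.

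For parts (2a) and (2b) the essential claim is that every $y \in \Adm$ with $r(y) \leq d$ lies below some $x \in N_d$ in Bruhat order, except for $y \in M^{(d'')}$ with $d'' < d$ and $g - d''$ even when $g - d$ is odd. The existence of such an $x$ I would prove by explicit Bruhat-raising: starting from $y$, successively multiply on the right by simple reflections inside $\Adm$, each raising $\ell$ by one and either fixing $r$ or increasing it by one, until $r = d$ is reached. The combinatorial description of admissible moves governs both the availability of the next reflection and the parity with which $r$ jumps; this parity is precisely what isolates the exceptional $y$ in the odd case. The exclusion of these exceptional top-dimensional $\cA_y$ from $\overline{\cA_I^{(d)}}$ splits into two subcases: for $d'' = d - 1$ with $g - d''$ even, part (1) gives $\ell(y) = \max_{x \in N_d} \ell(x)$ so strict Bruhat inequality $y < x$ is impossible; for $d'' \leq d - 3$ the obstruction is purely combinatorial, namely that the specific admissible elements in $M^{(d'')}$ cannot be Bruhat-raised to $N_d$ without leaving $\Adm$.

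Part (2c), the boundary case $d = g - 1$, I would handle by direct computation since $N_{g-1}$ admits a very explicit combinatorial description. Inspecting how simple reflections act on $w(\{1,\ldots,g\})$, one checks that the condition $w(\{1,\ldots,g\}) = \{1,\ldots,g\}$ is preserved under Bruhat descent from $N_{g-1}$, and conversely that any $y = t^{y_0} w'$ with $w'(\{1,\ldots,g\}) = \{1,\ldots,g\}$ can be raised into $N_{g-1}$ by a single well-chosen simple reflection. The main obstacle throughout is the combinatorial heart of (2b): the simultaneous control of Bruhat order, admissibility and the $p$-rank jump, particularly at the deeper levels $d'' \leq d - 3$ where dimension alone does not force the exclusion. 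Once that is settled, the remaining parts reduce to direct verification using the length and $p$-rank formulae established for part (1).
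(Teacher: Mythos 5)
Your overall framework---reducing everything to the Bruhat order, the length function and the $p$-rank function on $\Adm$ via the closure relations of KR strata---is exactly the paper's, but several of the mechanisms you propose for the hard steps are not the ones that work, and at least two of them are false as stated. First, the upper bound in (1) is not ``a short parity argument'': it is the technical core of the paper. One must compute $\cl(t^{x_0}w)$ for the candidate maximal elements via the Iwahori--Matsumoto formula (a lengthy case analysis over the three root types of $\GSp_{2g}$), reduce to the inequality $\cl(\sigma) - 2(A_\sigma + A_{\sigma^{-1}} + B_\sigma) \geq \tfrac{1}{2}\#\{i:\sigma(i)\neq i\}$ for $\sigma\in S_g$, and prove that inequality using a nontrivial identity of Clarke--Steingr\'{i}msson--Zeng. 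Nothing in your sketch supplies this. Second, your raising procedure claims each step ``either fixes $r$ or increases it by one''. This is wrong, and the error destroys the explanation of (2b): when the $S_g$-component contains a $2$-cycle, killing it necessarily creates \emph{two} new fixed points, so $r$ jumps by $2$ (Lemma \ref{going up}(2)); for an involution with $C_\sigma=0$ in possibly maximal position there is no move raising $r$ by exactly $1$. If $r$ could always be raised by $1$ there would be no exceptional strata in (2b) at all---the jump-by-two is the entire source of the parity obstruction.

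Third, your exclusion argument at depth $d''\leq d-3$ (``cannot be Bruhat-raised to $N_d$ without leaving $\Adm$'') is not what happens: elements of $M^{(d'')}$ \emph{can} be raised inside $\Adm$ to $p$-rank $d''+2, d''+4,\ldots$. The actual argument (Lemma \ref{prank4}) is a uniform dimension count valid at every depth: each covering reflection creates at most two new fixed points, so climbing from $p$-rank $d''$ to $d''+u$ with $u$ odd costs at least $(u+1)/2$ in length, and starting from a top-dimensional element this overshoots $\dim\cA_I^{(d''+u)}$ computed in (1). Conversely, the positive inclusion in (2b) needs the explicit classification of $M^{(d)}$ (Proposition \ref{topdim}) together with a three-way case analysis producing concrete dominating reflections; your sketch does not engage with why exactly the non-top-dimensional strata can be raised. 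Finally, for (2c) note that elements of $\Adm^{(g-1)}$ themselves have $u\neq 0$, i.e.\ $w(\{1,\ldots,g\})\neq\{1,\ldots,g\}$, so the condition in the displayed formula characterizes the complement of what you describe; and your claim that any such $y$ ``can be raised into $N_{g-1}$ by a single well-chosen simple reflection'' is false, since its $p$-rank may be far below $g-1$---the paper needs an induction on the cycle length together with a counting lemma on the occurrences of $s_0$ and $s_g$ in reduced decompositions of $t^{w(\mu)}$.
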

 
 In contrast to the the $p$-rank strata on $\cA_g$, the $p$-rank strata on $\cA_I$ are in general not equidimensional. 
 
 This paper is subdivided into three parts. In the first part we give some background. Here we give the definition of the extended affine Weyl group and a characterization of $\Adm$ in section \ref{GSp} and give the construction and required properties of the KR stratification in \ref{sectKR}. The second part is the calculation of the dimension of the $p$-rank strata. Finally, we compare the KR stratification and the $p$-rank stratification in the third part, giving an explicit description of $M^{(d)}$ in section \ref{secttopdim} and proving part (2) of Theorem \ref{results} in \ref{sectrelpos}.
 
 \textbf{Acknowledgements:} I would like to express my sincere gratitude to my advisors M.~Rapoport and E.~Viehmann for their steady encouragement and interest in my work. I am thankful that they introduced me to this subject and helped me in understanding it. Moreover I warmly thank U.~G\"ortz for his helpful remarks and for supplying me with his calculation of $\mu$-admissible elements in the case $g=3$ which ultimately led me to the description of the closure of $\cA_I^{(g-1)}$ as above. I am grateful to P.~Hartwig for his explanations about affine flag varieties. 
 
 \section{Background}
 We keep the notation of the introduction.  We also fix the following notation. For any set $R$ and $n$-tuple $v \in R^n$ we denote the $i$-th component of $v$ by $v(i)$. We abbreviate a tuple of the form $(v_1, \ldots , v_1, v_2, \ldots , v_2 , \ldots , v_m, \ldots, v_m)$ by $(v_1^{(k_1)}\cdots v_m^{(k_m)})$ where $k_i$ denotes the multiplicity of $v_i$. If $k_i = 1$ we will omit it.
  If $R$ is an ordered set, e.g. $R=\ZZ$, and $u,v \in R^n$ we write $u \leq v$ iff $u(i) \leq v(i)$ for every $i = 1,\ldots,n$.
 
  In order to simplify equations when using case analysis, we define for any statement $P$ the term
 \[
  \delta_P := \left\{ \begin{array}{rl}
                      0 & \textnormal{ if } P \textnormal{ is false} \\
                      1 & \textnormal{ if } P \textnormal{ is true}.
                     \end{array} \right.
 \]

 \subsection{Preliminaries on $\GSp_{2g}$} \label{GSp}
 
 \subsubsection{Group theoretic notation} \label{GSp1}
 
 Let $G$ be a reductive linear algebraic group over an algebraically closed field $k$, and let $A$ be a maximal torus of $G$. These data give rise to a root datum $(X^*(A),R_G,X_*(A),R_G^\vee)$ and its Weyl group $W_G = N_GA/A$. We denote by $Q_G^\vee$ its coroot lattice and by $W_{a\, G} = Q_G^\vee \rtimes W_G$ the affine Weyl group of $G$. For an element $x_0 \in Q_G^\vee$ we denote by $t^{x_0}$ the corresponding element in $W_{a\, G}$. The choice of a Borel subgroup containing $A$ determines a set of positive roots $R^+_G$ and a set of simple roots $\Delta_G$. Associated to $\Delta_G$ we have the sets of simple reflections $S_G = \{s_\alpha;\, \alpha \in \Delta_G\}$ and $S_{a\, G} = S_G \cup \{t^{-\widetilde{\alpha}^\vee}\cdot s_{\widetilde{\alpha}}\}$ of $W_G$ and $W_{a\, G}$ where $\widetilde{\alpha}$ denotes the (unique) highest root of $R^+_G$. Applying the standard identification of $W_{a\, G}$ with the set of alcoves on $X_* (A)_\RR$, our choice of $S_{a\, G}$ places the base alcove in the anti-dominant chamber. We denote by $\cl$ and by $\leq$ the length function and the Bruhat order on the Coxeter system $(W_{a\, G}, S_{a\, G})$. Whenever we deal with the case $G=\GSp_{2g}$ (as it will be in the majority of cases), we drop the subscript $G$.

 Let $\GL_{2g}$ denote the general linear group over $\FF$ and $D$ be its diagonal torus. We use the standard identification of the cocharacter group $X_*(D)$ with $\ZZ^{2g}$ and of $W_{\GL_{2g}}$ with the symmetric group $S_{2g}$.
 
 Denote by $\GSp_{2g}$ the group of similitudes corresponding to a $2g$-dimensional symplectic $\FF$-vector space $(V,\psi)$. We embed the $\GSp_{2g}$ into $\GL_{2g}$ by choosing a Darboux basis of $V$, i.e. a basis $(e_1, \ldots , e_{2g})$ such that
 \[
  \psi(e_i,e_{2g+1-i}) = - \psi (e_{2g+1-i}, e_i) = 1
 \]
 for $1\leq i \leq g$ and $\psi (e_i,e_j) = 0$ otherwise. Then the subgroup $T \subset \GSp_{2g}$ of diagonal matrices is a maximal torus and the upper triangular matrices form a Borel subgroup $B$ of $\GSp_{2g}$.
 
 $T$ is the group of all elements $t$ of the form $\diag(t_1, \ldots , t_{2g})$ such that there is a $c(t) \in k^\times$ with $t_i \cdot t_{2g+1-i} = c(t)$ for all $i = 1, \ldots , g$. Hence the embedding of $X_*(T)$ into $X_*(D) = \ZZ^{2g}$ yields the identification
 \[
  X_* (T) = \{v\in\ZZ^{2g};\, v(1)+v(2g) = v(2)+v(2g-1) = \cdots = v(g)+v(g+1)\}.
 \]
 We denote by $e_i^* \in X^* (T)$ the character which maps an element $t\in T$ to its $i$-th diagonal entry and $c \in X^* (T)$ the character which maps $t$ to its similitude factor. The positive roots in this setup are
 \begin{myeqnarray}
  \beta_{i,j}^1 &=& e_i^* - e_j^* & 1\leq i < j \leq g \\
  \beta_{i,j}^2 &=& e_i^* + e_j^* - c \quad & 1\leq i < j  \leq g \\
  \beta_i^3 &=& 2e_i^* - c & 1 \leq i \leq g.
 \end{myeqnarray}
 Then the simple roots are $\beta_{i, i+1}^1$ with $1\leq i \leq g-1$ and $\beta_g^3$.  
 
 The embedding of $\GSp_{2g}$ into $\GL_{2g}$ induces an embedding of the Weyl group $W$ of $\GSp_{2g}$ into $W_{\GL_{2g}} = S_{2g}$. Then $W$ is the centralizer of
 \[
  \theta = (1\enspace 2g)(2\enspace 2g-1) \cdots (g\enspace g+1)
 \]
 or equivalently the subset of all elements $w$ which satisfy $w(2g+1-i) = 2g+1-w(i)$ for all $i$. The simple (affine) reflections in this setup are
 \begin{myeqnarray}
  s_0 &=& t^{(-1 \, 0^{(2g-2)} \, 1)}\cdot (1\enspace 2g) & \\
  s_{i,i+1}  &=& (i\enspace i+1)(2g-i\enspace 2g-i+1)  \quad & 1 \leq i \leq g-1 \\
  s_g &=& (g\enspace g+1) &
 \end{myeqnarray}
 
 \subsubsection{The extended affine Weyl group}
 Using the same notation as above, we call $\widetilde{W}_G := X_* (A) \rtimes W_G \cong N_G A(k\rpot{t})/A(k\pot{t})$ the extended affine Weyl group of $G$. Analogous to the case of the affine Weylgroup, we denote by $t^{x_0}$ the element of $\widetilde{W}$ which corresponds to the cocharacter $x_0 \in X_* (T)$. Since every $W_G$-orbit of $X_* (T)$ is contained in a $Q_G^\vee$ coset, the affine Weyl group is a normal subgroup of $\widetilde{W}_G$ and we get a short exact sequence
 \[
  0 \longrightarrow W_{a\, G}\longrightarrow \widetilde{W}_G \longrightarrow X_*(A)/Q_G^\vee\longrightarrow 0.
 \]
 Identifying $t^{x_0} \cdot w$ with the map $x \mapsto w\cdot x + x_0$, we consider $\widetilde{W}_G$ as subgroup of the group of affine transformations on ${X_*(A)_\RR}$. Then the action of $\widetilde{W}_G$ on $X_* (A)_\RR$ stabilizes the union of all affine hyperplanes corresponding to an affine root. Thus we get a transitive action of $\widetilde{W}_G$ on the set of alcoves. So the short exact sequence right-splits; $\widetilde{W}_G$ is the semidirect product $W_{a\, G} \rtimes \Omega_G$ where $\Omega_G$ is the subgroup of all elements which fix the base alcove.
 
 We define the length function and the Bruhat order on $\widetilde{W}_G$ as follows: Let $x=c_1w_1$ and $y_1=c_2w_2$ be two elements of $\widetilde{W}_G$ and $c_i$ resp. $w_i$ their $\Omega_G$- and $W_{a\, G}$-component. We say that $x \leq y$  if $c_1 = c_2$ and $w_1 \leq w_2$ w.r.t. the Bruhat order on $W_{a\, G}$. The length $\cl(x)$ is defined to be $\cl(w_1)$. 
 
 Since we have $W \subset W_{\GL_{2g}}$ and $X_*(T) \subset X_*(D)$, the extended affine Weyl group $\widetilde{W}$ of $\GSp_{2g}$ is a subgroup of $\widetilde{W}_{\GL_{2g}}$.
 
 Now we recall the description of the extended affine Weyl group in terms of extended alcoves as in \cite{GY1}. For this purpose denote by $(e_i)_{i=1,\ldots,2g}$ the family of standard basis vectors in $\ZZ^{2g}$ and let $\bo = (1, 1, \ldots ,1) \in\ZZ^{2g}$.
 
 \begin{definition}
  An extended alcove is a tuple of vectors $\bx = (x_0, \ldots ,x_{2g-1})$ in $X_* (D)$ such that for every $i$ there is a $w(i) \in \{1,\ldots 2g\}$ with
  \[
   x_i = x_{i-1}-e_{w(i)}
  \]
  where $x_{2g} := x_0 - \bo$.
 \end{definition}
 
 Then $\widetilde{W}_{\GL_{2g}}$ acts simply transitively on the set of extended alcoves. Using $\omega = (\omega_0, \omega_1, \omega_2, \ldots) = (0,-e_1,-e_1-e_2,\ldots )$ as base point, we identify the extended affine Weyl group with this set. Using the same notation as in the definition, an extended alcove $\bx$ corresponds to $t^{x_0}\cdot w \in \widetilde{W}_{\GL_{2g}}$. We call an extended alcove corresponding to an element of $\widetilde{W}$ a $G$-alcove. Note that an extended alcove $\bx = (x_0,\ldots ,x_{2g-1})$ is a $G$-alcove if and only if there is a $c\in\ZZ$ with
 \[
  x_i + \theta(x_{2g-i}) = c\cdot\bo
 \]
 for all $i$.
 
 Observe that since $\omega$ lies in the closure of the base alcove, every extended alcove lies in the closure of the corresponding alcove of the $W_{a \, \GL_{2g}}$-component. This observation and the fact that the length function and the Bruhat order of the (affine, extended affine) Weyl group of $\GSp_{2g}$ is inherited from the (affine, extended affine) Weyl group of $\GL_{2g}$ (see \cite{KR}, \S 4.1) imply that we can conclude the following lemmata from their analogue concerning the affine Weyl group.
 
 \begin{lemma}\label{wall}
  Let $\alpha$ be an affine root of $\GSp_{2g}$ with corresponding wall $H_\alpha$ and reflection $s_\alpha$. If $x$ and $y = s_\alpha\cdot x$ are two elements of $\widetilde{W}$, we have $x \leq y$ if and only if $\bx$ lies on the same side of $H_\alpha$ as the base alcove.
 \end{lemma}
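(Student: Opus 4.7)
The plan is to deduce the lemma from the classical wall-crossing criterion for the affine Weyl group $W_a$ of $\GSp_{2g}$ (a Coxeter system), which is the ``analogue concerning the affine Weyl group'' referred to in the text, combined with the two structural observations just recalled: the definition of the Bruhat order on $\widetilde W = W_a \rtimes \Omega$ componentwise, and the fact that an extended alcove $\bx$ lies in the closure of the geometric alcove attached to its $W_a$-component.

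The classical fact I would invoke (standard Coxeter theory, e.g.\ Humphreys, \emph{Reflection Groups and Coxeter Groups}, Ch.~4) is: for $w \in W_a$ and a reflection $s_\beta \in W_a$ with wall $H_\beta$, one has $w \leq s_\beta w$ in the Bruhat order iff the alcove $w \cdot A_0$ lies on the same closed side of $H_\beta$ as the base alcove $A_0$. I would then lift this to $\widetilde W$ as follows. Write $x = c \cdot w_1$ with $c \in \Omega$ and $w_1 \in W_a$. Since $s_\alpha \in W_a$, we have $y = s_\alpha x = c \cdot (c^{-1} s_\alpha c) \cdot w_1$, so $x$ and $y$ share the $\Omega$-component $c$, and $c^{-1} s_\alpha c$ is a reflection in $W_a$ whose wall is $c^{-1}(H_\alpha)$. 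By the definition of the Bruhat order on $\widetilde W$, $x \leq y$ is equivalent to $w_1 \leq (c^{-1} s_\alpha c) w_1$ in $W_a$, hence by the classical criterion to $w_1 \cdot A_0$ lying on the same side of $c^{-1}(H_\alpha)$ as $A_0$, and finally, applying $c$ (which fixes $A_0$ and permutes the hyperplane arrangement), to $x \cdot A_0$ lying on the same side of $H_\alpha$ as $A_0$.

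To finish, I would translate from the geometric alcove $x \cdot A_0$ to the extended alcove $\bx$ via the closure observation: $\bx$ lies in $\overline{x \cdot A_0}$. Since $y \neq x$, the wall $H_\alpha$ does not meet the interior of $x \cdot A_0$, so the entire closure $\overline{x \cdot A_0}$ --- and in particular the tuple $\bx$ --- lies in the closed half-space on the same side of $H_\alpha$ as $x \cdot A_0$ itself. This yields the claimed equivalence.

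The main piece of care needed is the bookkeeping in the conjugation $c^{-1} s_\alpha c$, using that $\Omega$ acts on $X_*(T)_\RR$ by affine transformations preserving both the hyperplane arrangement and the base alcove; beyond this, the lemma is a formal consequence of the Coxeter-theoretic fact and the closure observation.
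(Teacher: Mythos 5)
Your proof is correct and takes essentially the same route as the paper, which does not write out an argument but derives the lemma from its affine Weyl group analogue (\cite{KR}, Cor.\ 1.5) using exactly the two ingredients you invoke: the componentwise definition of the Bruhat order on $\widetilde{W} = W_a \rtimes \Omega$ and the observation that every extended alcove lies in the closure of the geometric alcove of its $W_a$-component. Your $\Omega$-conjugation bookkeeping and the closed-half-space argument at the end correctly fill in the details the paper leaves implicit.
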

 
 \begin{lemma}[Iwahori-Matsumoto formula] \label{IMF}
  The length of an element of $\widetilde{W}$ equals the number of walls that separate the corresponding $G$-alcove from the base alcove, i.e.
  \[
   \cl(t^{x_0}\cdot w) = \sum_{\beta\in R^+ \atop w^{-1}\beta \in R^+} |\langle\beta,x_0\rangle| + \sum_{\beta\in R^+ \atop w^{-1}\beta \not\in R^+} |\langle\beta,x_0\rangle+1|.
  \]
 \end{lemma}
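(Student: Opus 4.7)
The plan is to follow the classical geometric approach of interpreting the length as a count of affine root hyperplanes separating two alcoves. First, I would establish that $\cl(x)$ equals the number of affine root hyperplanes $H_{\beta,k}$ of $\GSp_{2g}$ (with $\beta\in R^+$ and $k\in\ZZ$) that separate the $G$-alcove $\bx$ from the base alcove $\mathbf{a}_0$. This is the standard geometric interpretation of length in a Coxeter system, and it extends from $W_a$ to $\widetilde{W} = W_a\rtimes\Omega$ because elements of $\Omega$ have length zero and fix the base alcove. Concretely, one reduces an element of $\widetilde W$ by multiplication with simple affine reflections, using Lemma \ref{wall} to identify each such length-decreasing step with the crossing of a single wall.

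Next, I would count these separating walls one positive root at a time. The key geometric input is that for every $y\in\mathbf{a}_0$ and every $\beta\in R^+$ one has $\langle\beta,y\rangle\in(-1,0)$. The upper bound is the anti-dominance of $\mathbf{a}_0$. For the lower bound, write $\widetilde\alpha - \beta$ as a non-negative integer combination of simple roots, so that $\langle\widetilde\alpha - \beta,y\rangle \le 0$; then $\langle\beta,y\rangle \ge \langle\widetilde\alpha,y\rangle > -1$, the last inequality being exactly the wall condition imposed by $s_0$ on the base alcove.

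Now $\bx = w\cdot\mathbf{a}_0 + x_0$, so any $y'\in\bx$ may be written as $y' = wy + x_0$ with $y\in\mathbf{a}_0$, and therefore
\[
 \langle\beta,y'\rangle = \langle w^{-1}\beta, y\rangle + \langle\beta, x_0\rangle.
\]
Applying the bound $\langle\gamma,y\rangle\in(-1,0)$ to $\gamma = w^{-1}\beta$ when $w^{-1}\beta\in R^+$, resp.\ to $\gamma = -w^{-1}\beta$ otherwise, places $\langle\beta,y'\rangle$ in the open interval $(\langle\beta,x_0\rangle - 1,\, \langle\beta,x_0\rangle)$ in the first case and $(\langle\beta,x_0\rangle,\, \langle\beta,x_0\rangle + 1)$ in the second. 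Counting the integers $k$ whose hyperplane $H_{\beta,k}$ lies strictly between this interval and the base-alcove interval $(-1,0)$ gives exactly $|\langle\beta,x_0\rangle|$ in the first case and $|\langle\beta,x_0\rangle + 1|$ in the second. Summing over $\beta\in R^+$ (hyperplanes for different positive roots are distinct) yields the claimed formula.

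The main obstacle is the uniform bound $\langle\beta,y\rangle\in(-1,0)$ for $\beta\in R^+$ and $y\in\mathbf{a}_0$; this hinges on $\mathbf{a}_0$ being the alcove adjacent to the origin in the anti-dominant chamber, together with the irreducibility of the root system of $\GSp_{2g}$ so that every positive root is dominated by $\widetilde\alpha$. Once this estimate is in place, both the wall count per positive root and the global summation are purely combinatorial.
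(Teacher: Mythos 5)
Your argument is correct, but it takes a different (and much more self-contained) route than the paper. The paper does not actually prove this lemma: it observes that the length function and Bruhat order on $\widetilde{W}$ are inherited from $\widetilde{W}_{\GL_{2g}}$ (citing \cite{KR}, \S 4.1), notes that every extended alcove lies in the closure of the alcove of its $W_a$-component, and then simply invokes the classical Iwahori--Matsumoto formula for the affine Weyl group, the only new content being that the formula survives the passage from $W_a$ to $\widetilde{W}=W_a\rtimes\Omega$ because $\Omega$ fixes the base alcove and has length zero. You instead reconstruct the classical proof from scratch: length equals the number of separating walls, and the walls crossed in the direction of a fixed $\beta\in R^+$ are counted via the estimate $\langle\beta,y\rangle\in(-1,0)$ on the anti-dominant base alcove. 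Your wall count is right in all sign cases (the alcove of $t^{x_0}w$ sits in the interval $(\langle\beta,x_0\rangle-1,\langle\beta,x_0\rangle)$ or $(\langle\beta,x_0\rangle,\langle\beta,x_0\rangle+1)$ according to whether $w^{-1}\beta$ is positive, giving $|\langle\beta,x_0\rangle|$ resp.\ $|\langle\beta,x_0\rangle+1|$ separating hyperplanes), your derivation of the lower bound $\langle\beta,y\rangle>-1$ from the highest root is the standard one and uses the irreducibility of the $C_g$ root system correctly, and your remark that the extension to $\widetilde W$ only needs $\Omega$ to fix the base alcove is exactly the point the paper relies on. What your approach buys is independence from the external citation and an argument that works verbatim for any extended affine Weyl group with the base alcove normalized in the anti-dominant chamber; what the paper's approach buys is brevity, at the cost of leaving the actual combinatorics to \cite{KR} and the literature. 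One small caveat: "lies strictly between this interval and the base-alcove interval" should be read as "separates the two open intervals" (a hyperplane through a common endpoint, e.g.\ $k=0$ when $\langle\beta,x_0\rangle=1$ in the first case, still counts), but since neither interval contains an integer your count is unaffected.
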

  
 The analogue of Lemma \ref{IMF} is the usual Iwahori-Matsumoto formula for the affine Weyl group, the analogue of Lemma \ref{wall} is stated and proven in \cite{KR}, Corollary 1.5.
 
 \subsubsection{Minuscule $G$-alcoves} \label{ss minuscule}
 As we will see in the next section, the KR strata are in one-to-one correspondence with the minuscule $G$-alcoves of size $g$. In this subsection we give two useful characterizations of an element of $\widetilde{W}$ which corresponds to a minuscule alcove of size $g$.
 
 \begin{definition}
  \emph{(1)} We call a $G$-alcove $\bx$ minuscule of size $g$ if
  \begin{equation} \label{minuscule}
   \omega_i \leq x_i \leq \omega_{i} + \bo \textnormal{ for all } i 
  \end{equation}   \begin{equation}\label{size}
   \{x_0(i),x_0(2g+1-i)\} = \{0,1\} \textnormal{ for all } i
  \end{equation}
  \emph{(2)} An element $x$ of $\widetilde{W}$ is called $\mu$-admissible for $\mu \in X_* (T)$ if there exists a $w\in W$ such that $x \leq w(\mu)$. We denote the set of $\mu$-admissible elements by $\Adm$. 
 \end{definition} 
 
 \begin{remark}
  (\ref{size}) excludes merely the alcoves $\omega$ and $(\omega_0 + \bo, \ldots, \omega_{2g-1} + \bo)$.
 \end{remark}
 
 \begin{remark}
  It is obvious from the definition that there exists a (unique) element $\tau\in\Omega$ such that $\Adm \subset W_a\tau$. It is given by
 \[
  \tau = t^{(0^{(g)}\, 1^{(g)})}\cdot \left( (1\enspace g+1)\cdots (g\enspace 2g) \right) .
 \]
 \end{remark}
 
 \begin{lemma} \label{mu-adm}
  If (\ref{size}) holds, we can replace (\ref{minuscule}) by
  \begin{equation} \label{newminuscule}
   x_{0}(i) = \left\{ \begin{array}{ll} 0 & w^{-1}(i) > i \\ 1 & w^{-1}(i) < i \end{array} \right.
  \end{equation}
  where $t^{x_0}\cdot w$ corresponds to $\bx$.
 \end{lemma}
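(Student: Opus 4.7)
The plan is to unwind the two conditions componentwise and turn (\ref{minuscule}) into explicit integer bounds on each $x_0(k)$, then match them against (\ref{newminuscule}) under the constraint (\ref{size}).

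First, I would use the recursion $x_i = x_{i-1} - e_{w(i)}$ (which is just the statement $t^{x_0}\cdot w$ corresponds to $\bx$ in the description from \cite{GY1}) to unwind
\[
 x_i = x_0 - \sum_{j=1}^{i} e_{w(j)},
\]
so that componentwise $x_i(k) = x_0(k) - \delta_{w^{-1}(k)\leq i}$. On the other hand $\omega_i(k) = -\delta_{k\leq i}$. Substituting into (\ref{minuscule}) and rearranging, one sees that (\ref{minuscule}) is equivalent to
\[
 \delta_{w^{-1}(k)\leq i}-\delta_{k\leq i} \;\leq\; x_0(k) \;\leq\; 1+\delta_{w^{-1}(k)\leq i}-\delta_{k\leq i}
\]
holding for every $k$ and every $i\in\{0,1,\ldots,2g-1\}$. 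Taking $i=0$ already gives $0\leq x_0(k)\leq 1$, so combined with (\ref{size}) we have $x_0(k)\in\{0,1\}$.

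Next, I would fix $k$ and locate the most restrictive choice of $i$. If $w^{-1}(k)>k$, then for any $i$ with $k\leq i<w^{-1}(k)$ the two Iverson brackets read $0$ and $1$, forcing $-1\leq x_0(k)\leq 0$; combined with $x_0(k)\in\{0,1\}$ this gives $x_0(k)=0$. Symmetrically, if $w^{-1}(k)<k$, taking $w^{-1}(k)\leq i<k$ forces $1\leq x_0(k)\leq 2$, hence $x_0(k)=1$. If $w^{-1}(k)=k$ the two brackets agree for all $i$, so (\ref{minuscule}) imposes nothing beyond (\ref{size}); this is why (\ref{newminuscule}) remains silent in that case. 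This proves (\ref{minuscule}) $\Rightarrow$ (\ref{newminuscule}) given (\ref{size}).

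For the converse, I would check directly that each prescribed value of $x_0(k)$ makes the above inequality hold for \emph{every} $i$. In the case $w^{-1}(k)>k$, $x_0(k)=0$, the inequality reduces to $\delta_{w^{-1}(k)\leq i}\leq \delta_{k\leq i}$, which is immediate since $w^{-1}(k)\leq i$ implies $k<w^{-1}(k)\leq i$. The case $w^{-1}(k)<k$, $x_0(k)=1$, dually reduces to $\delta_{k\leq i}\leq \delta_{w^{-1}(k)\leq i}$; and the fixed-point case $w^{-1}(k)=k$ is trivial. Since the whole argument is just Iverson-bracket bookkeeping, there is no real obstacle; the only subtlety worth flagging is the possibility $w^{-1}(k)=k$, which explains why (\ref{newminuscule}) is stated only for the two strict inequalities.
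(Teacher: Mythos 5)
Your proof is correct and takes essentially the same route as the paper's: both unwind the recursion to the identity $x_k(i)-\omega_k(i)=x_0(i)-\delta_{w^{-1}(i)\leq k}+\delta_{i\leq k}$ and then evaluate at the binding indices in each direction. The only cosmetic difference is that you state the componentwise inequality for all $i$ up front and isolate the most restrictive $i$, whereas the paper evaluates directly at $k=w^{-1}(i)-1$ resp.\ $k=w^{-1}(i)$; the substance is identical.
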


 \begin{proof}
  Assume (\ref{minuscule}) holds. Then
  \[
   w^{-1}(i) > i \Rightarrow x_0(i) = x_{w^{-1}(i)-1} (i) \leq \omega_{w^{-1}(i)-1} (i)+1 = 0
  \]
  \[
   w^{-1}(i) < i \Rightarrow x_0(i) = x_{w^{-1} (i)}+1 \geq \omega_{w^{-1}(i)} (i)+ 1 = 1
  \]
  On the other hand if (\ref{newminuscule}) holds, we get
  \[
   x_k(i) - \omega_k (i) = x_0 (i) - \delta_{w^{-1}(i) \leq k} + \delta_{i \leq k} = \left\{ \begin{array}{rcl}
                                                                                            1 &\textnormal{if } w^{-1}(i) > i, & i \leq k < w^{-1}(i) \\
                                                                                            0 &\textnormal{if } w^{-1}(i) > i, &  \textnormal{otherwise} \\
                                                                                            0 &\textnormal{if } w^{-1}(i) < i, & w^{-1} (i) \leq k < i \\
                                                                                            1 &\textnormal{if } w^{-1}(i) < i, &  \textnormal{otherwise} \\
                                                                                            x_0(i) &\textnormal{if } w^{-1}(i) = i. & \\
                                                                                           \end{array} \right. 
  \]                                                                                      
 \end{proof}

 A more vivid description is given by a result of Kottwitz and Rapoport. For this let us denote $\mu = (1^{(g)}\, 0^{(g)}) \in X_* (T)$.
 \begin{theorem}[\cite{KR}, Theorem 4.5.3] \label{KR453}
  A $G$-alcove $\bx$ is minuscule of size $g$ if and only if it corresponds to a $\mu$-admissible element.
 \end{theorem}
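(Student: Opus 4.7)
The plan is to prove the equality $\Adm = \{\bx : \bx \text{ is a minuscule } G\text{-alcove of size } g\}$ by showing each inclusion. For the direction $\Adm \subseteq \{\text{minuscule of size } g\}$, I would first verify by direct inspection that for every $w \in W$ the translation $t^{w(\mu)}$ corresponds to a minuscule $G$-alcove of size $g$: $w(\mu)$ is a permutation of $(1^{(g)}\, 0^{(g)})$ satisfying $w(\mu)(i)+w(\mu)(2g+1-i) = 1$, which gives (\ref{size}), and since the Weyl component is trivial one has $x_k = w(\mu) + \omega_k$, visibly satisfying (\ref{minuscule}). It then suffices to show that the property of being minuscule of size $g$ is preserved under passing to a smaller element in the Bruhat order. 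By induction on length this reduces to the following claim: given a minuscule $\bx$ of size $g$ and an affine reflection $s_\alpha$ with $\bx' := s_\alpha\bx < \bx$, the alcove $\bx'$ is still minuscule of size $g$. I would verify this by a case analysis over the affine roots associated to $\beta_{i,j}^1, \beta_{i,j}^2, \beta_i^3$ of section \ref{GSp1}: such a reflection alters only a small, root-specific subset of the coordinates of each $x_k$, and by Lemma \ref{wall} the relation $\bx' < \bx$ forces $\bx$ to lie on the side of $H_\alpha$ opposite the base alcove, which keeps the altered coordinates in the interval $[\omega_k(i), \omega_k(i)+1]$.

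For the reverse direction, I would argue by induction going \emph{up} in the Bruhat order: given a minuscule $\bx$ of size $g$ not of the form $t^{w(\mu)}$, I would exhibit an affine reflection $s_\alpha$ such that $s_\alpha \bx$ is still minuscule of size $g$ and $\cl(s_\alpha\bx) > \cl(\bx)$. Since the length is bounded on this finite set, iteration terminates at some $t^{w(\mu)}$, yielding $\bx \leq t^{w(\mu)}$. To locate a suitable $\alpha$, I would use the reformulation (\ref{newminuscule}) from Lemma \ref{mu-adm}: if $\bx$ corresponds to $t^{x_0}w$ with $w \neq \id$, each inversion of $w$ produces a candidate finite root, and using the Iwahori-Matsumoto formula (Lemma \ref{IMF}) I would argue that at least one corresponding affine reflection moves $\bx$ further from the base alcove while preserving (\ref{newminuscule}).

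The main obstacle will be the reverse direction. The bookkeeping required to guarantee that a length-increasing reflection preserves both (\ref{minuscule}) and (\ref{size}) is delicate, particularly when some $x_k(i)$ saturates an endpoint of $[\omega_k(i),\omega_k(i)+1]$ and hence lies on a wall. One must exploit the symplectic symmetry $x_i + \theta(x_{2g-i}) = c\cdot\bo$ of a $G$-alcove together with (\ref{size}) to rule out configurations in which every candidate root is blocked simultaneously. An alternative approach is to adapt the strategy of \cite{KR}: first prove the analogous statement for $\GL_{2g}$ using the lattice-theoretic description of the Bruhat order on $\widetilde{W}_{\GL_{2g}}$, where the argument is much cleaner, and then descend to $\GSp_{2g}$ via the compatibility of the length functions recalled in section \ref{GSp1}.
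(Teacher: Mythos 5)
First, a point of comparison that matters here: the paper does not prove this statement at all. Theorem \ref{KR453} is imported verbatim from Kottwitz--Rapoport (\cite{KR}, Theorem 4.5.3) and used as a black box, so there is no in-paper argument to measure your attempt against; it has to stand on its own.

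Your outline of the direction ``admissible $\Rightarrow$ minuscule of size $g$'' is essentially sound: $t^{w(\mu)}$ is visibly minuscule of size $g$, and one reduces by the subword/chain characterization of the Bruhat order to showing that a single downward reflection step preserves minusculeness, which can be checked via Lemma \ref{wall} (or, more cleanly, by the convexity argument of \cite{KR} showing that permissibility is inherited downward). The genuine gap is the converse, which you correctly flag as ``the main obstacle'' and then do not address. Asserting that one should ``exhibit an affine reflection $s_\alpha$ such that $s_\alpha\bx$ is still minuscule and longer'' and that ``the bookkeeping is delicate'' is a restatement of the theorem rather than a proof: the entire content of Theorem 4.5.3 is that such a reflection always exists whenever $\bx$ is not one of the translations $t^{w(\mu)}$, and in \cite{KR} establishing this occupies most of their Section 4. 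Your fallback of proving the statement for $\GL_{2g}$ and ``descending'' has a further unconfronted gap: the $\GL_{2g}$ argument only yields domination by some $t^{\nu}$ with $\nu$ an arbitrary permutation of $(1^{(g)}\,0^{(g)})$, whereas $\mu$-admissibility in $\widetilde{W}$ requires $\nu=w(\mu)$ with $w$ in the \emph{symplectic} Weyl group, i.e.\ $\nu(i)+\nu(2g+1-i)=1$ for all $i$; bridging that (together with the compatibility of the two Bruhat orders, itself one of the nontrivial results of \cite{KR} that the paper also cites rather than proves) is exactly where the symplectic-specific work lies. As it stands the proposal is a plan, not a proof; since the paper legitimately cites this result, the honest course is to do the same rather than claim an independent argument.
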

 
 \subsection{The KR stratification on $\cA_I$} \label{sectKR}
  In this section we recall the construction of the KR stratification and some geometric properties of KR strata resp.\ chains of abelian varieties corresponding to a point of a given KR stratum.
  
 \subsubsection{Moduli spaces} \label{modsp}

 Let $N \geq 3$ be an integer coprime to $p$. The ``classical'' Siegel moduli problem associates the set of isomorphism classes $(A,\lambda,\eta)$ to a locally Noetherian $\FF$-scheme $S$ where
 \begin{itemize}
  \item $A$ is an abelian variety of relative dimension $g$ over $S$,
  \item $\lambda:A\longrightarrow A^\vee$ is a principal polarization,
  \item $\eta$ is a symplectic level-$N$-structure on $A$.
 \end{itemize}
 The moduli problem is solved by an irreducible quasi-projective $\FF$-scheme $\cA_g$ of dimension $\frac{g(g+1)}{2}$.
 
 Now consider the functor associating the set of isomorphism classes of quadruples $(A_\bullet,\lambda_0,\lambda_g,\eta)$ to a locally Noetherian $\FF$-scheme $S$ where
 \begin{itemize}
  \item $A_\bullet = (A_0 \stackrel{\alpha}{\longrightarrow} A_1 \stackrel{\alpha}{\longrightarrow} \cdots \stackrel{\alpha}{\longrightarrow} A_g)$ is a sequence of abelian varieties over $S$ of relative dimension $g$ and the $\alpha$ are isogenies of degree $p$,
  \item $\lambda_0,\lambda_g$ are principal polarizations of $A_0$ and $A_g$ respectively,
  \item $\eta$ is a symplectic level-$N$-structure on $A_0$,
 \end{itemize}
 such that the composition of all arrows in the diagram
  \begin{equation} 
  \begin{tikzpicture} \label{diagIwahori}
   \matrix(a)[matrix of math nodes, row sep=3.0em, column sep=2.5em,text height=1.5ex, text depth=0.45ex]
   {A_0 & A_1 & \cdots & A_g \\ A_0^\vee & A_1^\vee & \cdots & A_g^\vee\\};
   \path[->] (a-1-1) edge node[above] {$\alpha$} (a-1-2);
   \path[->] (a-1-2) edge node[above] {$\alpha$} (a-1-3);
   \path[->] (a-1-3) edge node[above] {$\alpha$} (a-1-4);
   \path[->] (a-2-2) edge node[above] {$\alpha^\vee$} (a-2-1);
   \path[->] (a-2-3) edge node[above] {$\alpha^\vee$} (a-2-2);
   \path[->] (a-2-4) edge node[above] {$\alpha^\vee$} (a-2-3);
   
   \path[->] (a-2-1) edge node[left] {$\lambda_0^\vee$} (a-1-1);
   \path[->] (a-1-4) edge node[right] {$\lambda_g$} (a-2-4);
  \end{tikzpicture}
 \end{equation}
 equals multiplication by $p$. This functor is represented by a quasi-projective $\FF$-scheme $\cA_I$ of dimension $\frac{g(g+1)}{2}$ and the canonical projection $\pi: \cA_I \rightarrow \cA_g, (A_\bullet,\lambda_0,\lambda_g,\eta)\mapsto (A_0,\lambda_0,\eta)$ is a proper and surjective morphism.
 
 Let us abbreviate the notation of an $S$-point of $\cA_I$. We will usually denote it by $\underline{A_\bullet}$. In the following we write $A_\bullet = A_0 \rightarrow \ldots \rightarrow A_{2g}$ for the sequence of abelian varieties of $\underline{A_\bullet}$ \emph{supplemented by its dual}. This is, we identify $A_g \cong A_g^\vee$ via $\lambda_g$ and let $A_{2g-i} := A_{i}^\vee$ for $i=0,\ldots ,g$. The morphisms $A_{2g-i} \rightarrow A_{2g-i+1}$ are defined to be the dual of $A_{i-1} \rightarrow A_i$.
 
  Given an abelian variety $A$, we denote by $A[p]$ the kernel of the multiplication by $p$. Recall that if $A$ is defined over an algebraically closed field $k$ of characteristic $p$, the group of $A[p](k)$ is isomorphic to $(\ZZ/p\ZZ)^r$ where $0 \leq r \leq \dim A$ is called the $p$-rank of $A$.
 
 Since the preimage of a locally closed subset w.r.t.\ a continuous map is again locally closed, the $p$-rank strata on $\cA_I$ are locally closed. We endow them with the reduced subscheme structure. 

 \subsubsection{Standard lattice chains and the first de Rham cohomology group} \label{ssectlc}
 We begin with the definition of the standard lattice chains over $\FF\pot{t}$ and $\FF$. Let $e_1,\ldots,e_{2g}$ denote the canonical basis of $\FF\rpot{t}^{2g}$. The standard lattice chain over $\FF\pot{t}$
 \[
  \lambda_\bullet = \lambda_{-2g}\ \hookrightarrow \lambda_{-2g+1} \hookrightarrow \ldots \hookrightarrow \lambda_{0}
 \]
 is given by $\lambda_{-i} = \langle e_1, \ldots, e_{2g-i}, t\cdot e_{2g-i+1},\ldots ,t\cdot e_{2g}\rangle_{\FF\pot{t}} \subset \FF\rpot{t}^{2g}$ where $\{e_1,\ldots ,e_{2g}\}$ is the standard basis of $\FF\rpot{t}^{2g}$ and $\langle \ldots \rangle_{\FF\pot{t}}$ denotes the $\FF\pot{t}$-submodule which is generated by the elements inside the brackets. Denote by $(\,\, ,\, )$ the bilinear form represented by the matrix $J_{2g} := \adiag (-1^{(g)}\, 1^{(g)})$. Obviously $\lambda_{-g-i}$ is the dual of $\lambda_{-g+i}$ w.r.t.\ $t^{-1}\cdot(\,\,,\,)$. We denote by
 \[
  \Lambda_\bullet = \Lambda_{-2g} \rightarrow \Lambda_{-2g+1} \rightarrow \ldots \rightarrow \Lambda_0
 \]
 the lattice chain obtained by base changing $\lambda_\bullet$ to $\FF$ with respect to the evaluation at zero $ev_0: \FF\pot{t} \rightarrow \FF$. The images of the canonical bases of $\lambda_{-i}$ w.r.t.\ the base change morphism are bases $\{e_1^i,\ldots, e_{2g}^i\}$ of $\Lambda_{-i}$ such that the linear map $\Lambda_{-i} \rightarrow \Lambda_{-i+1}$ is represented by the matrix $\diag (1^{(2g-i)}\, 0 \, 1^{(i-1)})$. We identify $\Lambda_{i-2g} \cong \Lambda_{-i}^\vee$ for all $i=0,\ldots, g$ via the non-degenerate bilinear form $\Lambda_{i-2g} \times \Lambda_{-i} \rightarrow \FF$ given by the matrix $J_{2g}$. In terms of this identification the morphism $\Lambda_{i-1-2g} \rightarrow \Lambda_{i-2g}$ is the dual of $\Lambda_{-i} \rightarrow \Lambda_{-i+1}$. Thus we can also write 
 \begin{equation} \label{sequence}
  \Lambda_\bullet = \Lambda_0^\vee \rightarrow \cdots \rightarrow \Lambda_{-g}^\vee \stackrel{J_{2g}}{\cong} \Lambda_{-g} \rightarrow \cdots \rightarrow \Lambda_0 \stackrel{J_{2g}}{\cong} \Lambda_0^\vee
 \end{equation}
 For any $\FF$-algebra $R$ we denote the base change of $\Lambda_\bullet$ and $\lambda_\bullet$ to $R$ resp.\ $R\pot{t}$ by $\Lambda_{\bullet,R}$ resp.\ $\lambda_{\bullet,R}$.
 
 We denote by $H_{\dR}^i (A/S)$ or simply $H_{\dR}^i(A)$ the de Rham cohomology of an $S$-scheme $A \stackrel{a}{\rightarrow} S$. We are interested in the case where $i=1$ and $A$ is an abelian variety of relative dimension $g$. In this case $\DR(A/S)$ is a locally free $\co_S$-module of rank $2g$ and the Hodge - de Rham spectral sequence degenerates at $E_1$, yielding an inclusion $\omega_A := \R^0a_*(\Omega_{A/S}^1) \hookrightarrow \DR(A/S)$. This embedding makes $\omega_A$ Zariski-locally a direct summand of rank $g$ of $\DR (A)$ (\cite{BBM}, \S 2.5). Furthermore, we have a natural isomorphism $\DR(A^\vee/S) \cong \DR(A/S)^\vee$ (\cite{BBM} \S 5.1).
 
 If $S = \Spec k$ is the spectrum of a perfect field of characteristic $p$, the Hodge-filtration of $\DR(A/S)$ can also be given in terms of Dieudonn\'e theory. For this we denote the (contravariant) Dieudonn\'e module of a finite, commutative, $p$-torsion group scheme $K$ over $k$ by $(\DD(K),F,V)$.
  
 \begin{theorem}[\cite{Oda}, Cor.\ 5.11] \label{oda}
  Let $k$ be a perfect field and $A$ an abelian variety over $k$. There is a natural isomorphism $\DR(A/\Spec k ) \cong \DD(A[p])$ which identifies $\omega_A$ with $V\DD(A[p])$.
 \end{theorem}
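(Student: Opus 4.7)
The plan is to establish the isomorphism via crystalline Dieudonn\'e theory, which is the natural framework interpolating between $p$-divisible groups and de Rham cohomology. Let $W = W(k)$ denote the Witt vectors of $k$. By the main theorem of crystalline Dieudonn\'e theory (Mazur--Messing, Berthelot--Breen--Messing), the contravariant Dieudonn\'e functor applied to the $p$-divisible group $A[p^\infty]$ yields a canonical isomorphism
\[
 \DD(A[p^\infty]) \cong H^1_{\mathrm{cris}}(A/W)
\]
compatible with Frobenius and Verschiebung. The base change property of crystalline cohomology gives an identification
\[
 H^1_{\mathrm{cris}}(A/W) \otimes_W k \cong \DR(A/\Spec k),
\]
and the content of the Hodge-filtered crystalline theory is that under this identification the Hodge filtration $\omega_A$ on the right corresponds to the reduction mod $p$ of $V\cdot H^1_{\mathrm{cris}}(A/W)$ on the left.

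The second step is purely algebraic. I would apply the exact Dieudonn\'e functor to
\[
 0 \longrightarrow A[p] \longrightarrow A[p^\infty] \stackrel{\cdot p}{\longrightarrow} A[p^\infty] \longrightarrow 0
\]
to deduce the natural isomorphism $\DD(A[p]) \cong \DD(A[p^\infty])/p\DD(A[p^\infty])$, with the Verschiebung on $\DD(A[p])$ being the reduction of the Verschiebung on $\DD(A[p^\infty])$. Stringing the isomorphisms together gives $\DR(A/\Spec k) \cong \DD(A[p])$, and under this identification $\omega_A$ is the image of $V$, as required. Naturality in $A$ follows since every constituent is functorial.

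The main obstacle is the first step: the identification of the Hodge filtration inside $H^1_{\mathrm{cris}}(A/W)\otimes_W k$ with the reduction of $V\cdot H^1_{\mathrm{cris}}(A/W)$. This rests on the concrete construction of the Dieudonn\'e crystal via the universal vector extension $E(A)$ of $A$. One has the canonical exact sequence $0\to \omega_{A^\vee} \to \Lie E(A) \to \Lie A \to 0$, which dually realizes $\omega_A$ as a subspace of $\DR(A/\Spec k)$; on the other hand, once the crystalline comparison is unwound, this very subspace is what emerges as the image of Verschiebung. Checking this compatibility diagrammatically is the technical heart of Oda's proof, whereas the remaining pieces (Dieudonn\'e for $p$-divisible groups, base change of crystalline cohomology, exactness of $\DD$) are comparatively formal.
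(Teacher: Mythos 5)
The paper does not prove this statement at all: it is quoted verbatim as \cite{Oda}, Cor.~5.11 and used as a black box, so there is no internal proof to compare against. Judged on its own terms, your outline is a correct and standard \emph{modern} derivation of Oda's theorem, and it is genuinely different in route from Oda's original argument. Oda (1969) works directly with the first de Rham cohomology, the universal vector extension, and the classical contravariant Dieudonn\'e theory of finite group schemes killed by $p$; crystalline cohomology in its present form was not available to him. You instead pass through the $p$-divisible group: $\DD(A[p^\infty]) \cong H^1_{\mathrm{cris}}(A/W)$, base change $H^1_{\mathrm{cris}}(A/W)\otimes_W k \cong \DR(A/\Spec k)$, and the exactness of $\DD$ applied to $0\to A[p]\to A[p^\infty]\xrightarrow{\,p\,} A[p^\infty]\to 0$ to get $\DD(A[p])\cong \DD(A[p^\infty])/p$. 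All three identifications are correct and compatible with $F$ and $V$, and since $p\DD(A[p^\infty])=VF\DD(A[p^\infty])\subset V\DD(A[p^\infty])$, the image of $V$ in the quotient is exactly $V\DD(A[p^\infty]) \bmod p$, so the bookkeeping at the end is right. What this route buys is the ability to quote large theorems (Berthelot--Breen--Messing, Mazur--Messing) instead of doing the hands-on construction; what it costs is that the one step carrying real content --- that the Hodge filtration $\omega_A\subset \DR(A/\Spec k)$ equals $V H^1_{\mathrm{cris}}(A/W) \bmod p$ --- is essentially equivalent to the theorem itself, and you correctly flag it as the technical heart rather than proving it. For completeness: a cleaner way to discharge that step than the universal-extension diagram chase is to note that crystalline Frobenius is divisible by $p$ on the Hodge filtration, so $\omega_A\subseteq \ker(\overline{F})$, that $\ker(\overline{F})=VM/pM$ because $F$ is injective on the torsion-free module $M=H^1_{\mathrm{cris}}(A/W)$ and $FV=p$, and that both sides have dimension $g$. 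So: a valid proof strategy, honestly presented as a reduction to quoted inputs, but not the paper's route --- the paper simply cites Oda.
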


 Now let $S= \Spec R$ be affine and Noetherian. It is shown in de Jong's paper \cite{dJ} that if we apply $\DR$ to the diagram ($\ref{diagIwahori}$) we get a diagram of $\co_S$-modules
 \[
  \begin{tikzpicture}
   \matrix(a)[matrix of math nodes, row sep=3.0em, column sep=2.5em,text height=1.5ex, text depth=0.45ex]
   {\DR(A_0) & \DR(A_1) & \cdots & \DR(A_{g}) \\ \DR(A_0)^\vee & \DR(A_1)^\vee & \cdots & \DR(A_{g})^\vee \\};
   \path[->] (a-1-2) edge (a-1-1);
   \path[->] (a-1-3) edge (a-1-2);
   \path[->] (a-1-4) edge (a-1-3);
   \path[->] (a-2-1) edge (a-2-2);
   \path[->] (a-2-2) edge (a-2-3);
   \path[->] (a-2-3) edge (a-2-4);  
   \path[->] (a-1-1) edge node[left] {$\cong$} node[right] {$q_0^\vee$} (a-2-1);
   \path[->] (a-2-4) edge node[left] {$\cong$} node[right] {$q_g$} (a-1-4);
  \end{tikzpicture}.
 \]
 such that
 \begin{itemize}
  \item The horizontal sequences are dual to each other.
  \item $q_0$ and $q_g$ define non-degenerate alternating forms on $\DR(A_0)$ and $\DR(A_g)$ respectively.
  \item $\coker (\DR(A_i) \rightarrow \DR(A_{i-1}))$ is a locally free $R$-module of rank $1$ for $i=1,\ldots ,2g$.
  \item The composition of all morphisms in the diagram is zero.
 \end{itemize}
 Any diagram having these properties is locally isomorphic to $\Lambda_{\bullet,R}$. Furthermore $\omega_{A_0}$ and $\omega_{A_g}$ are totally isotropic w.r.t.\ the bilinear forms $q_0$ resp.\ $q_g$ (see \cite{dJ}).
 
 \subsubsection{The local model} \label{ssectlm}
 Following the book of Rapoport and Zink \cite{RZ}, we obtain a local model diagram for $\cA_I$,
 \[
  \begin{tikzpicture}
   \node (A) at (0,0em) {$\widetilde{\cA}_I$};
   \node (B) at (-5em,-5em) {$\cA_I$};
   \node (C) at (5em,-5em) {$\loc$};
   
   \path[->] (A) edge node[above] {$\varphi$} (B);
   \path[->] (A) edge node[above] {$\psi$} (C);
  \end{tikzpicture}
 \]
 i.e.\ a diagram of $\FF$-schemes where $\psi$ is smooth, $\varphi$ is smooth and surjective and $\cA_I \cong \loc$ \'etale locally. 
 
 For any Noetherian $\FF$-algebra $R$, the $R$-valued points of this diagram are given by
 \begin{eqnarray*}
  \widetilde{\cA_I} (R) &=& \{(\underline{A_\bullet}, \iota_\bullet);\, \underline{A_\bullet} \in \cA_I (R), \iota_\bullet: \DR(A_\bullet) \stackrel{\cong}{\rightarrow} \Lambda_{\bullet,R}\} \\
  \loc (R) &=& \left\{ (\cF_{-i})_{i=0,\ldots,2g};\,\begin{array}{l}\forall i: \cF_{-i} \hookrightarrow \Lambda_{-i,R} \textnormal{ is locally a direct summand of rank } g. \\ \forall i<2g: \Lambda_{-i-1} \rightarrow \Lambda_{-i} \textnormal{ maps } \cF_{-i-1} \textnormal{ to } \cF_{-i}. \\ \forall i: \cF_{-i}, \cF_{i-2g} \textnormal{ are in duality w.r.t.\ } \Lambda_{-i}^\vee \cong \Lambda_{i-2g}  \end{array} \right\} \\
  \varphi(\underline{A_\bullet},\iota_\bullet) &=& \underline{A_\bullet} \\
  \psi(\underline{A_\bullet},\iota_\bullet) &=& (\iota_{i} (\omega_{A_i}))_{i=0, \ldots ,2g}.
 \end{eqnarray*}
 
 Here we mean by an isomorphism $\iota_\bullet$ a tuple of $R$-linear isomorphisms $\iota_i: \DR(A_i) \stackrel{\cong}{\rightarrow} \Lambda_{-i}$ which commute with the canonical morphisms $\Lambda_{-i-1} \rightarrow \Lambda_{-i}$ resp. their dual for $0 \leq i < g$ and identify the bilinear forms $q_0$, $q_g$ with those of (\ref{sequence}) up to a constant. 
 Denote by $\underline{\Aut}(\Lambda_\bullet)$ the group scheme over $\FF$ whose $R$-points are the automorphisms of $\Lambda_{\bullet,R}$. Then $\varphi$ is an $\underline{\Aut}(\Lambda_\bullet)$-torsor and $\psi$ is $\underline{\Aut} (\Lambda_\bullet)$-equivariant with respect to the canonical left action on $\widetilde{\cA_I}$ and $\loc$.
 
 We have a bijection between $R$-points of $\loc$ and diagrams of $R\pot{t}$-modules
 \begin{equation} \label{diagFlag}
  \begin{tikzpicture}
   \matrix(a)[matrix of math nodes, row sep=3.0em, column sep=2.5em,text height=1.5ex, text depth=0.45ex]
   {\lambda_{-2g,R} & \lambda_{-2g+1,R} & \cdots & \lambda_{0,R} \\ L_{-2g} & L_{-2g+1} & \cdots & L_0 \\ t\cdot\lambda_{-2g} & t\cdot\lambda_{-2g+1,R} & \cdots & t\cdot\lambda_{0,R} \\};
   \path[right hook->] (a-1-1) edge (a-1-2);
   \path[right hook->] (a-1-2) edge (a-1-3);
   \path[right hook->] (a-1-3) edge (a-1-4);
   \path[right hook->] (a-2-1) edge (a-2-2);
   \path[right hook->] (a-2-2) edge (a-2-3);
   \path[right hook->] (a-2-3) edge (a-2-4);
   \path[right hook->] (a-3-1) edge (a-3-2);
   \path[right hook->] (a-3-2) edge (a-3-3);
   \path[right hook->] (a-3-3) edge (a-3-4);
 
   \path[right hook->] (a-2-1) edge (a-1-1);
   \path[right hook->] (a-3-1) edge (a-2-1);
   \path[right hook->] (a-2-2) edge (a-1-2);
   \path[right hook->] (a-3-2) edge (a-2-2);
   \path[right hook->] (a-2-4) edge (a-1-4);
   \path[right hook->] (a-3-4) edge (a-2-4);
  \end{tikzpicture}
 \end{equation}
 such that
 \begin{itemize}
  \item $\lambda_{-i}/L_{-i}$ are projective $R$-modules of rank $g$.
  \item $L_{-i}$ and $L_{i-2g}$ are dual to each other with respect to the bilinear form $t^{-1}\cdot (\,\, , \,)$.
 \end{itemize}
 Here a sequence $\cF_\bullet \in \loc (R)$ corresponds to a diagram (\ref{diagFlag}) with $L_{-i}$ being the preimage of $\cF_{-i}$ w.r.t.\ the canonical projection $\lambda_{-i,R} \rightarrow \lambda_{-i,R}/t\cdot\lambda_{-i,R} = \Lambda_{-i,R}$. Using that projectivity is a local property, it is easy to see that this indeed defines a bijection of $R$-valued points of $\loc$ and diagrams of this form. Obviously this bijection is functorial.  
  
 \subsubsection{Construction of the KR stratification} \label{ssectflag} Here we recall the construction of the KR stratification of Ng\^{o} and Genestier in \cite{GN}. We denote by $\LGSp_{2g}$ resp.\ $\pLGSp_{2g}$ the loop group resp.\ the positive loop group of $\GSp_{2g}$. Let $\cB$ be the standard Iwahori subgroup, i.e. the preimage of $B$ w.r.t.\ the reduction map $\pLGSp_{2g} \rightarrow \GSp_{2g}$ and $\Fl := \LGSp_{2g}/\cB$ the affine flag variety.
 
 \begin{definition} Let $R$ be a Noetherian $\FF$-algebra. \smallskip \\
  \emph{(1)} A lattice in $R\rpot{t}^{2g}$ is a sub-$R\pot{t}$-module $L$ such that $t^N R\pot{t}^{2g} \subset L \subset t^{-N} R\pot{t}^{2g}$ for some $N$ and such that $t^{-N}R\pot{t}/L$ is a projective $R$-module. \smallskip \\
  \emph{(2)} A complete periodic lattice chain is a sequence of lattices $\{L_i\}_{i\in\ZZ}$ with $L_{i-1} \subset L_{i}$ such that for every $i$ we have that $L_i/L_{i-1}$ is a locally free $R$-module of rank $1$ and $L_{i+2g} = t^{-1}\cdot L_i$. We call $\{ L_i \}_{i\in \ZZ}$ self-dual if Zariski-locally on $R$, there exits a unit $c\in R\rpot{t}$ such that $L_{-i} = c\cdot L_i^\vee$, where $L_i^\vee$ denotes the dual of $L_i$ w.r.t.\ the bilinear form $(\,\, , \,)$.
 \end{definition}
 
 We expand $\lambda_\bullet$ to a self-dual periodic lattice chain by setting $\lambda_{i+2g\cdot r} := t^r \cdot \lambda_i$. It is well-known that the map
 \begin{eqnarray*}
  \Fl(R) &\rightarrow& \{ \textnormal{self-dual complete periodic lattice chains in } R\rpot{t}\} \\
  g\cdot\cB &\mapsto& g\cdot\lambda_\bullet
 \end{eqnarray*}
 is bijective i.e.\ the functor associating the set of self-dual complete periodic lattice chains in $R\rpot{t}^{2g}$ with $R$ is represented by $\Fl$. By expanding the diagram (\ref{diagFlag}) we obtain an embedding $\loc \hookrightarrow \Fl$. We denote by $\Fl_x := \cB x \cB/\cB$ the Schubert cell associated to an element $x \in \widetilde{W}$. Then $\loc$ can be written as disjoint union of Schubert cells.
   
 \begin{proposition}[\cite{GN} Cor.~3.2] \label{cells}
  $\loc = \coprod_{x \in \Adm} \Fl_{x}$
 \end{proposition}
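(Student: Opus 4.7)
The plan is to exploit the embedding $\loc \hookrightarrow \Fl$ of subsections \ref{ssectlm}--\ref{ssectflag}, first establishing that $\loc$ is a union of Schubert cells and then identifying which ones occur. Since $\cB$ is the stabilizer of the chain $\lambda_\bullet$, left multiplication by $\cB$ preserves each of the defining conditions on a point $L_\bullet \in \loc \subset \Fl$, namely that $t\lambda_{-i,R} \subset L_{-i} \subset \lambda_{-i,R}$, that $\lambda_{-i,R}/L_{-i}$ is projective of rank $g$, and that $L_\bullet$ be self-dual under $(\,,\,)$. Therefore $\loc$ is left-$\cB$-stable in $\Fl$ and decomposes as $\loc = \coprod \Fl_x$ over some subset of $\widetilde{W}$ to be determined.

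Next I would work at the level of $\FF$-points and translate these conditions into the combinatorics of extended alcoves. Writing $\bx = (x_0,\ldots,x_{2g-1})$ for the extended alcove attached to $x\in\widetilde{W}$, the canonical representative $x\lambda_\bullet$ of $\Fl_x(\FF)$ is a lattice chain with $L_{-i}$ determined by the coordinate vector $x_i$ under the standard dictionary fixed in \ref{ss minuscule}. With this dictionary, the inclusion $t\lambda_{-i}\subset x\lambda_{-i} \subset \lambda_{-i}$ for all $i$ is exactly the minuscule condition (\ref{minuscule}). Moreover, because both $\lambda_\bullet$ and $x\lambda_\bullet$ have step-length one, the rank of $\lambda_{-i}/(x\lambda_{-i})$ is independent of $i$, so the requirement of rank $g$ reduces to a single condition at $i=0$; in combination with (\ref{minuscule}) and the $G$-alcove relation $x_0(j)+x_0(2g+1-j) = \mathrm{const.}$ this is equivalent to (\ref{size}). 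Self-duality is automatic from the $G$-alcove property.

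Combining the two steps, $\Fl_x \subset \loc$ if and only if $\bx$ is a minuscule $G$-alcove of size $g$, and by Theorem \ref{KR453} this is equivalent to $x \in \Adm$. The principal obstacle is not conceptual but rather bookkeeping: carefully tracking sign and indexing conventions so that the lattice-theoretic inclusions and rank conditions on $L_\bullet$ match the inequalities and parity conditions on the coordinates of $\bx$. Once the dictionary is laid out, the substantive input is absorbed into the Kottwitz--Rapoport identification of Theorem \ref{KR453}.
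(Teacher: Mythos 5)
The paper does not actually prove Proposition \ref{cells}: it is imported from Genestier--Ng\^o (\cite{GN}, Cor.~3.2), and the only in-paper material bearing on it is the remark immediately following, which records the computation $x\cdot\lambda_{-i} = \langle t^{x_{2g-i}(j)+1}\cdot e_j\rangle_{\FF\pot{t}}$. So there is no internal argument to compare yours against; what I can say is that your outline is the natural proof given the setup of subsections \ref{ssectlm}--\ref{ssectflag}, and that it is essentially sound. The $\cB$-stability step is correct: $\cB$ stabilizes every $\lambda_{-i}$ and scales $(\,\,,\,)$ by a unit of $R\pot{t}$, so all defining conditions of $\loc$, including exact duality with respect to $t^{-1}\cdot(\,\,,\,)$, are preserved; since the Schubert cells are precisely the $\cB$-orbits, it then suffices to test the representatives $x\lambda_\bullet$ on $\FF$-points. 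Your dictionary also checks out against the remark: $t\lambda_{-i}\subset x\lambda_{-i}\subset\lambda_{-i}$ for all $i$ is exactly (\ref{minuscule}), and, given (\ref{minuscule}) and the $G$-alcove relation, imposing rank $g$ at a single index forces $x_0(j)+x_0(2g+1-j)=1$ with both entries in $\{0,1\}$, which is (\ref{size}).

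The one place you are too quick is the sentence ``self-duality is automatic from the $G$-alcove property.'' Exact duality of $x\lambda_{-i}$ and $x\lambda_{i-2g}$ with respect to $t^{-1}\cdot(\,\,,\,)$ holds only up to the similitude factor of $x$, whose $t$-valuation is $\langle c,x_0\rangle = x_0(j)+x_0(2g+1-j)$; the $G$-alcove property only says this quantity is independent of $j$, and it is the size condition (\ref{size}) that normalizes it to the value making the duality exact. Since you impose (\ref{size}) anyway, this is a misattribution rather than a gap, but the dependency should be stated, as otherwise one would wrongly conclude that every minuscule $G$-alcove of the wrong size still satisfies the duality condition. With that repaired, Theorem \ref{KR453} converts ``minuscule of size $g$'' into membership in $\Adm$ and the decomposition $\loc=\coprod_{x\in\Adm}\Fl_x$ follows as you say.
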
   
   
 \begin{remark}
  \emph{(1)} Its left action identifies $\cB$ with the group of automorphisms of $\lambda_\bullet$ that fix $(\,\, ,\,)$ up to a constant. Thus the Schubert cells in $\loc$ coincide with the $\underline{\Aut}(\Lambda_\bullet)$-orbits. \smallskip \\
  \emph{(2)} Let $x\in \widetilde{W}$ and $\bx = (x_0,\ldots,x_{2g-1})$ the corresponding extended alcove. A short calculation shows that $x\cdot\lambda_{-i} = \langle  t^{x_{2g-i} (j) +1} \cdot e_j;\, j=1,\ldots,2g \rangle_{\FF\pot{t}}$ thus the corresponding subspace $\cF_{-i} \subset \Lambda_{-i}$ has basis $\{e_j^{i};\, (x_{2g-i} - \omega_{2g-i})(j) = 0\}$.
 \end{remark}
 
 \begin{definition}
  For any $x\in\Adm$, we define the KR stratum $\cA_x = \varphi (\psi^{-1} (\Fl_x))$.
 \end{definition}
 
 Now $\psi$ is $\underline{\Aut}(\Lambda_\bullet)$-equivariant, thus $\psi^{-1}(\Fl_x)$ is $\underline{\Aut}(\Lambda_\bullet)$-stable. Since $\varphi$ is a $\underline{\Aut}(\Lambda_\bullet)$-torsor, the property of being locally closed descends to $\cA_x$. We endow the KR strata with the reduced subscheme structure. The properties of KR strata can be deduced from the analogous properties of Schubert cells in a similar manner.
 
 \begin{proposition}[\cite{GN}, \S 4] \label{KR} 
  Let $x\in\Adm$. \smallskip \\
  \emph{(1)} $\overline{\cA_x}= \coprod_{y \leq x} \cA_y$. \smallskip \\
  \emph{(2)} $\cA_x$ is smooth of pure dimension $\cl (x)$.
 \end{proposition}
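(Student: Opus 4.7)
The plan is to transfer the standard properties of Schubert cells in the affine flag variety $\Fl$ through the local model diagram
\[
 \cA_I \stackrel{\varphi}{\longleftarrow} \widetilde{\cA_I} \stackrel{\psi}{\longrightarrow} \loc \hookrightarrow \Fl.
\]
Here $\varphi$ is an $\underline{\Aut}(\Lambda_\bullet)$-torsor, in particular smooth and faithfully flat, while $\psi$ is smooth and $\underline{\Aut}(\Lambda_\bullet)$-equivariant. The key observation is that since the fibers of $\varphi$ are exactly the $\underline{\Aut}(\Lambda_\bullet)$-orbits and $\psi^{-1}(\Fl_x)$ is stable under $\underline{\Aut}(\Lambda_\bullet)$ by equivariance, we have the equality $\varphi^{-1}(\cA_x) = \psi^{-1}(\Fl_x)$.

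The input from the affine flag variety side is the standard Bruhat-theoretic fact that $\Fl_x$ is locally closed in $\Fl$, smooth of pure dimension $\cl(x)$ (in fact an affine space of that dimension), and that $\overline{\Fl_x} = \coprod_{y\leq x}\Fl_y$. Intersecting with $\loc$ and using Proposition~\ref{cells}, the same closure relation holds inside $\loc$, with the union restricted to $y\in\Adm$ (which is automatic as $\Adm$ is closed under the Bruhat order below $x$).

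For (2), smoothness of $\psi$ implies that $\psi^{-1}(\Fl_x)$ is smooth over $\FF$, and then smoothness descends through the smooth surjective morphism $\varphi$ to yield smoothness of $\cA_x$. For the dimension, I use the dimension balance $\dim \cA_I = \dim \loc = \tfrac{g(g+1)}{2}$ together with the fact that both $\varphi$ and $\psi$ are smooth with equidimensional fibers; this forces their relative dimensions to agree, so that
\[
 \dim \cA_x = \dim\psi^{-1}(\Fl_x) - \dim(\varphi\text{-fiber}) = \cl(x) + \dim(\psi\text{-fiber}) - \dim(\varphi\text{-fiber}) = \cl(x).
\]
For (1), smoothness (hence flatness) of $\varphi$ and $\psi$ ensures that preimage commutes with closure, so the chain of equalities
\[
 \varphi^{-1}(\overline{\cA_x}) = \overline{\varphi^{-1}(\cA_x)} = \overline{\psi^{-1}(\Fl_x)} = \psi^{-1}(\overline{\Fl_x}) = \coprod_{y\leq x}\psi^{-1}(\Fl_y) = \coprod_{y\leq x}\varphi^{-1}(\cA_y)
\]
together with surjectivity of $\varphi$ yields $\overline{\cA_x} = \coprod_{y\leq x}\cA_y$.

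The only genuine work is in the flag-variety input; the transfer through the local model is essentially formal, and the main point to verify carefully is the commutation of preimage with closure (which requires flatness of both $\varphi$ and $\psi$) and the dimension bookkeeping. This proposition is essentially Ng\^o--Genestier \cite{GN}, \S 4, and I would primarily reference their argument, supplying the short dimension balance above.
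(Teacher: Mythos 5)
Your argument is correct and is precisely the route the paper takes: the paper attributes the result to Ng\^o--Genestier and merely remarks that the properties of Schubert cells descend through the local model diagram via the identity $\varphi^{-1}(\cA_x)=\psi^{-1}(\Fl_x)$ (equivariance of $\psi$ plus the torsor property of $\varphi$), which is exactly the transfer you spell out. Your added details (openness of smooth maps to commute preimage with closure, fppf descent of smoothness, and the dimension bookkeeping using $\dim\cA_I=\dim\loc=\tfrac{g(g+1)}{2}$) are the standard way to make that sketch precise.
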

 
 In particular, the $\cA_x$ form a stratification of $\cA_I$
 
 \subsubsection{The $p$-rank on a KR stratum} From now on let $k$ be an algebraically closed field. The calculation of the $p$-rank on a given KR stratum of Ng\^{o} and Genestier in \cite{GN}, Thm.~4.1 also proves that $x$ determines the kernels of the isogenies of the chains $A_\bullet$ corresponding to an $k$-point of $\cA_x$ up to isomorphism. We give a proof quite similar to theirs using Dieudonn\'e theory.
 
Recall that up to isomorphism there are only three finite group schemes of order $p$ over $k$:
 \begin{itemize}
  \item $\ZZ/p\ZZ$, the constant scheme.
  \item $\alpha_p$, characterized by $\alpha_p(R) = \{u\in R; u^p=0\}$.
  \item $\mu_p$, characterized by $\mu_p(R) = \{u\in R; u^p = 1\}$.
 \end{itemize}
 The corresponding Dieudonn\'e modules are characterized by
 \begin{itemize}
  \item $\DD(\ZZ/p\ZZ) \cong \FF$, $F$ bijective, $V = 0$.
  \item $\DD(\alpha_p) \cong \FF$, $F = V = 0$.
  \item $\DD(\mu_p) \cong \FF$, $F = 0$, $V$ bijective.
 \end{itemize}
 
 \begin{proposition} \label{genestier-ngo}
  Let $\underline{A_\bullet}\in \cA_x(k)$. Denote the kernel of the isogeny $A_{i-1}\rightarrow A_i$ by $K_i$. \smallskip \\
  \emph{(1)} $K_i \cong \mu_p$ iff $w(i) = i, x_0(i) = 1$. \smallskip \\
  \emph{(2)} $K_i \cong \ZZ/p\ZZ$ iff $w(i) = i, x_0(i) = 0$. \smallskip \\
  \emph{(3)} $K_i \cong \alpha_p$ iff $w(i) \not= i$.
 \end{proposition}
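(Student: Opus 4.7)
The plan is to use Oda's theorem to identify the Dieudonn\'e module of $K_i$ with the one-dimensional cokernel $C_i := \coker(\DR(A_i) \to \DR(A_{i-1}))$, to read off the $F$- and $V$-action on $C_i$ from the local-model data parametrized by $x = t^{x_0}w$, and then to conclude by Cartier duality.

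First, applying contravariant Dieudonn\'e theory to the four-term exact sequence
\[
 0 \to K_i \to A_{i-1}[p] \to A_i[p] \to Q_i \to 0,
\]
where $Q_i$ is the cokernel of the induced map on $p$-torsion, and using exactness of $\DD$, I identify $\DD(K_i)$ with $\coker(\DD(A_i[p]) \to \DD(A_{i-1}[p]))$, which by Theorem \ref{oda} coincides with $C_i$. Since $K_i$ has order $p$, the space $C_i$ is one-dimensional over $k$. On a one-dimensional Dieudonn\'e module over algebraically closed $k$, both $F$ and $V$ are semi-linear endomorphisms, hence each is either zero or bijective, and $FV=VF=0$ on $p$-torsion forces at most one of them to be nonzero. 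Comparing with the characterization of $\DD(\mu_p)$, $\DD(\ZZ/p\ZZ)$, $\DD(\alpha_p)$ recalled in the excerpt, $K_i \cong \mu_p$ iff $V|_{C_i}$ is bijective, $K_i \cong \ZZ/p\ZZ$ iff $F|_{C_i}$ is bijective, and $K_i \cong \alpha_p$ iff both vanish.

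Next, I determine when $V|_{C_i}$ is bijective using the local model. Because $VM_{i-1}=\omega_{A_{i-1}}$, this occurs iff $\omega_{A_{i-1}}$ is not contained in the image of $M_i\to M_{i-1}$. Under the identifications $M_j\leftrightarrow\Lambda_{-j}$ and $\omega_{A_j}\leftrightarrow\cF_{-j}$ the chain map $\Lambda_{-i}\to\Lambda_{-i+1}$ is represented by $\diag(1^{(2g-i)},0,1^{(i-1)})$ in the bases $\{e_j^i\}$ and $\{e_j^{i-1}\}$, so its cokernel is spanned by the class of $e_{2g-i+1}^{i-1}$; thus $V|_{C_i}$ is bijective iff $e_{2g-i+1}^{i-1}\in\cF_{-i+1}$. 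By the remark following Proposition \ref{cells}, this is equivalent to $(x_{2g-i+1}-\omega_{2g-i+1})(2g-i+1)=0$. Using the formulas $x_k(j)=x_0(j)-\delta_{w^{-1}(j)\leq k}$ and $\omega_k(j)=-\delta_{j\leq k}$, together with the $\GSp_{2g}$-symmetries $x_0(j)+x_0(2g+1-j)=1$ and $w(2g+1-j)=2g+1-w(j)$, and invoking Lemma \ref{mu-adm} to rule out $w^{-1}(i)>i$ when $x_0(i)=1$, the condition collapses to $w(i)=i$ and $x_0(i)=1$, establishing assertion (1).

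Finally, assertion (2) follows from Cartier duality: the dual of the isogeny $A_{i-1}\to A_i$ is the isogeny $A_{2g-i}\to A_{2g-i+1}$ in the augmented chain, so $K_{2g-i+1}$ is the Cartier dual of $K_i$; combined with the fact that $\mu_p$ is Cartier dual to $\ZZ/p\ZZ$, applying (1) at index $2g-i+1$ and translating back via the $\theta$-symmetry yields (2). Assertion (3) is then the residual case $w(i)\neq i$. I expect the main technical obstacle to be the basis computation in the local model together with careful bookkeeping of the indexings related by $i\leftrightarrow 2g-i+1$ via $\theta$, both needed to reduce the numerical conditions produced by the cokernel computation to the clean statement of the proposition.
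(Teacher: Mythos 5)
Your proposal is correct and follows essentially the same route as the paper: Oda's theorem identifies $\DD(K_i)$ with the rank-one cokernel of $\DR(A_i)\to\DR(A_{i-1})$, the case $K_i\cong\mu_p$ is detected by $V\neq 0$ there, i.e.\ by $\omega_{A_{i-1}}\not\subset\operatorname{im}(\DR(A_i))$, the local-model bases convert this into the numerical condition on $x=t^{x_0}w$ via Lemma \ref{mu-adm}, and (2), (3) follow by duality and exclusion. The one step you leave implicit is the normalization to the standard representative of the Schubert cell before quoting the basis description of $\cF_{-i+1}$ (justified, as in the paper, by the $\cB(k)$-invariance of the condition, since the hyperplane $\operatorname{im}(\DR(A_i))=\langle e_j^{i-1};\, j\neq 2g+1-i\rangle$ is $\cB(k)$-stable); as literally written, ``$V|_{C_i}$ bijective iff $e_{2g+1-i}^{i-1}\in\cF_{-i+1}$'' holds only for that representative.
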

 \begin{proof}
  It suffices to prove (1). Then (2) follows by duality and (3) by exclusion. Now the exact sequence of commutative, finite, $p$-torsion group schemes
  \[
   0 \longrightarrow K_i \longrightarrow A_{i-1}[p] \longrightarrow A_i[p]
  \]
  gives rise to an exact sequence of Dieudonn\'e modules
  \[
    \DD(A_i[p]) \stackrel{\alpha}{\longrightarrow} \DD(A_{i-1}[p]) \stackrel{\beta}{\longrightarrow} \DD(K_i) \longrightarrow 0
  \]
  Now $\beta$ restricts to a surjection $V\DD(A_{i-1}[p]) \twoheadrightarrow V\DD(K_i)$, thus we have $V\DD(A_{i-1}[p]) \not\subset \alpha(\DD(A_i[p]))$ if and only if $V\DD(K_i) \not= 0$, i.e.\ $K_i \cong \mu_p$. By Theorem \ref{oda} this translates to the equivalence
  \[
   \omega_{A_{i-1}} \not\subset \alpha(\DR(A_i)) \Lra K_i \cong \mu_p.
  \]
  Choose an isomorphism $\DR(A_\bullet) \cong \Lambda_\bullet$ and let $\{e_1^j,\ldots,e_{2g}^j\}$ be the bases we described in \ref{ssectlc}.
  
  $\cB(k)$ acts on $\DR(A_{i-1})$ by multiplication on the left of block matrices of the form

  \[
   \left( \begin{array}{c|c}
    U_{2g+1-i} & 0 \\ \hline
    M & U_{i-1}
   \end{array} \right)
  \]
  where the $U_r$ are upper triangular $r\times r$ matrices over $k$. Thus $\alpha(\DR(A_i)) = \langle e_j^{i-1}; j \not= 2g+1-i \rangle_\FF$ is $\cB(k)$-stable, so the condition $\omega_{A_{i-1}} \not\subset \alpha(\DR(A_i))$ is invariant under the $\cB (k)$-action. Let $\bx = (x_0,\ldots x_{2g-1})$ denote the extended alcove corresponding to $x$. Then we may assume that $\omega_{A_i} = \langle e_j^{i}; (x_{2g-i} - \omega_{2g-i})(j) = 0 \rangle_k$ (see remark in subsection \ref{ssectflag}). We get
  \begin{equivarray}
   & \omega_{A_{i-1}} \not\subset \alpha(\DR(A_i)) \\
   \Lra & e_{2g+1-i}^{i-1} \in \omega_{A_{i-1}} \\
   \Lra & (x_{2g+1-i} - \omega_{2g+1-i})(2g+1-i) = 0 \\
   \Lra & x_{2g+1-i} (2g+1-i) = -1 \\
   \Lra & x_0 (2g+1-i) = 0 \textnormal{ and } w^{-1}(2g+1-i) \leq 2g+1-i \\
   \Lra & x_0 (i) = 1 \textnormal{ and } w^{-1}(i) \geq i.
  \end{equivarray}
  Now Lemma \ref{mu-adm} implies that the last line is equivalent to $x_0(i) = 1, w(i)= i$.
 \end{proof} 
 
 \begin{corollary}[\cite{GN}, Thm.~4.1] \label{relationship}
  The KR stratification is a refinement of the stratification by $p$-rank. The $p$-rank on the stratum $\cA_x, x \in \Adm$ is given by $\# \{i\in\{ 1, \ldots ,g\}; \, w(i) = i \}$ where $x=t^{x_0} w, w\in W$ and where $w$ is considered as an element of $S_{2g} \supset W$.
 \end{corollary}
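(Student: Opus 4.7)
The plan is to compute the $p$-rank of $A_0$ from the isomorphism classes of the kernels $K_i$ produced by Proposition \ref{genestier-ngo}, and then rewrite the result using the $G$-alcove symmetry relations already at hand.

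\emph{Filtering $A_0[p]$.} Since the composition of all arrows in the extended chain $A_0 \to A_1 \to \cdots \to A_{2g}$ equals multiplication by $p$ on $A_0$, the subgroup $A_0[p]$ coincides with $\ker(A_0 \to A_{2g})$. The increasing subgroups $H_i := \ker(A_0 \to A_i)$, $0 \leq i \leq 2g$, form a filtration of $A_0[p]$; a short diagram chase using surjectivity of each $A_0 \to A_{i-1}$ identifies the graded piece $H_i/H_{i-1}$ canonically with $K_i$.

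\emph{Counting rational points.} Over the algebraically closed field $k$, the functor of $k$-points is exact on finite commutative $p$-torsion group schemes: any torsor under such a group is a nonempty finite $k$-scheme and hence admits a $k$-section. Applied inductively to the filtration, this yields $|A_0[p](k)| = \prod_{i=1}^{2g} |K_i(k)|$. Among the three possibilities of Proposition \ref{genestier-ngo}, only $K_i \cong \ZZ/p\ZZ$ contributes a factor $p$, whereas $\mu_p$ and $\alpha_p$ contribute $1$. Hence the $p$-rank of $A_0$ equals
\[
r = \#\{i \in \{1,\ldots,2g\} : w(i) = i \text{ and } x_0(i) = 0\}.
\]

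\emph{Exploiting the $G$-alcove symmetry.} The inclusion $W \subset S_{2g}$ is characterized by $w(2g+1-i) = 2g+1-w(i)$, so the fixed-point set of $w$ is stable under the involution $i \mapsto 2g+1-i$. On the other hand, condition (\ref{size}) forces $\{x_0(i), x_0(2g+1-i)\} = \{0,1\}$, so within each pair $\{i, 2g+1-i\}$ exactly one index satisfies $x_0 = 0$. Combining these facts, every fixed pair of $w$ contributes exactly one to $r$, while non-fixed pairs contribute nothing, giving $r = \#\{i \in \{1,\ldots,g\} : w(i) = i\}$.

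Since this formula depends only on the combinatorial datum $x$, the $p$-rank is constant on each KR stratum $\cA_x$, which is the refinement assertion. I do not expect a genuine obstacle here: given Proposition \ref{genestier-ngo}, the only real care needed is to verify that the graded pieces of the filtration on $A_0[p]$ really are the $K_i$ and to manage the bookkeeping of the symmetry constraints in the final count.
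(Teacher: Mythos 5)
Your proposal is correct and follows essentially the same route as the paper: filter $A_0[p]$ by the kernels $\ker(A_0\to A_i)$ so that $\#A_0[p](k)=\prod_i \#K_i(k)$, read off which $K_i$ are \'etale from Proposition \ref{genestier-ngo}, and halve the count of fixed points in $\{1,\dots,2g\}$ using the symmetry $w(2g+1-i)=2g+1-w(i)$ together with condition (\ref{size}). The paper compresses all of this into two lines; your version merely makes explicit the exactness of $k$-points on finite commutative group schemes over an algebraically closed field and the pairing-off of fixed points, both of which the paper leaves implicit.
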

 \begin{proof}
  Let $\underline{A_\bullet} \in \cA_x(k)$. Since the multiplication by $p$ is the composition of the maps $A_i \rightarrow A_{i+1}$, its kernel is an extension of the $K_i$. Thus 
  \[
   \log_p (\# A_0[p] (k)) = \sum_{i=0}^{2g-1} \log_p(\# K_i(k)) =\log_p(\frac{1}{2} \#\{i\in\{1,\ldots,2g\}; w(i) = i\})
  \]
 which gives the result.
 \end{proof} 
 
 The following result about KR strata allows us to calculate the dimension of $\cA_I^{(d)}$.
 
 \begin{corollary} \label{lengthdim}
  Denote by $\Adm^{(d)}$ the set of $\mu$-admissible elements, which give rise to a KR-stratum on which the $p$-rank is $d$. Then
  \[
   \dim \cA_I^{(d)} = \max_{x \in \Adm^{(d)}} \cl (x)
  \]
 \end{corollary}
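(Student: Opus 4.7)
The plan is to observe that this corollary packages Proposition \ref{KR} together with Corollary \ref{relationship}, plus the elementary fact that the dimension of a finite disjoint union of locally closed subschemes is the maximum of the pieces' dimensions.

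First I would invoke Corollary \ref{relationship}: the $p$-rank is constant on each KR stratum $\cA_x$, depending only on the $W$-component of $x$, so each $\cA_x$ is contained in exactly one $\cA_I^{(d)}$. Combined with the fact from Proposition \ref{KR} that the $\cA_x$ for $x\in\Adm$ stratify $\cA_I$, this gives the set-theoretic decomposition
\[
 \cA_I^{(d)} = \coprod_{x\in\Adm^{(d)}} \cA_x.
\]
Since both sides carry the reduced induced structure from $\cA_I$, this is also an equality of reduced locally closed subschemes.

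Next I would apply Proposition \ref{KR}(2), which says that each $\cA_x$ is smooth of pure dimension $\cl(x)$. Because $\Adm$ is finite, so is $\Adm^{(d)}$, and therefore $\cA_I^{(d)}$ is a finite disjoint union of smooth locally closed subschemes whose dimensions are known. The dimension of such a union equals the maximum of the component dimensions, so
\[
 \dim \cA_I^{(d)} = \max_{x\in\Adm^{(d)}} \dim \cA_x = \max_{x\in\Adm^{(d)}} \cl(x).
\]

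There is no real obstacle here; the statement is essentially a bookkeeping consequence of the refinement result. The only minor point to note is that $\Adm^{(d)}$ is nonempty for every $0\leq d\leq g$, which follows because $\cA_g^{(d)}$ is nonempty by Theorem \ref{koblitz} and $\pi$ is surjective, so $\cA_I^{(d)} = \pi^{-1}(\cA_g^{(d)})$ is nonempty and must intersect some KR stratum.
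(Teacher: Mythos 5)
Your argument is correct and is essentially the paper's own proof: the paper likewise decomposes $\cA_I^{(d)}$ as the finite union of the locally closed strata $\cA_x$ for $x\in\Adm^{(d)}$ (using the refinement result) and concludes $\dim \cA_I^{(d)} = \max_{x\in\Adm^{(d)}}\dim\cA_x = \max_{x\in\Adm^{(d)}}\cl(x)$ via Proposition \ref{KR}. Your write-up just makes explicit the bookkeeping the paper leaves implicit.
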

 \begin{proof}
  Since $\cA_I^{(d)}$ is the finite union of locally closed $\cA_x$, we get \enlargethispage{1ex}
  \[
   \dim \cA_I^{(d)} = \max_{x\in\Adm^{(d)}} \dim \cA_x = \max_{x \in\Adm^{(d)}} \cl (x)
  \]
 \end{proof}
 
 We call the a KR-stratum \emph{top-dimensional} if it has the same dimension as the $p$-rank stratum which contains it. Since the KR-strata are equidimensional this is equivalent to saying that all its irreducible components have maximal dimension in the $p$-rank stratum. We call the corresponding $\mu$-admissible elements \emph{of maximal length}. Note that by Corollary \ref{lengthdim} an element $x \in\Adm^{(d)}$ is of maximal length if and only if $\cl(x) = \max_{x'\in\Adm^{(d)}} \cl(x')$.
 
   
 \section{The Dimension of the $p$-rank strata} \label{sect2}

 \subsection{Combinatorics of the symmetric group} In order to estimate the length of $\mu$-admissible elements we need some results from the combinatorics of the symmetric group. The following definition will help us to express the length of certain $\mu$-admissible elements.

 \begin{definition}
  Let $\sigma \in S_g$. We define
  \begin{eqnarray*}
   A_\sigma &=& \# \{(i,j)\in\{1,\ldots , g\}^2 ; \, i<j<\sigma(j)<\sigma(i) \}\\
   B_\sigma &=& \# \{(i,j)\in\{1,\ldots , g\}^2 ; \, i<j=\sigma(j)<\sigma(i) \}\\
   C_\sigma &=& \# \{(i,j)\in\{1,\ldots , g\}^2 ; \, i<j<\sigma(i)<\sigma(j) \}.
  \end{eqnarray*}
 \end{definition}

 The following proposition is a reformulation of a result of Clarke, Steingr\'imsson and Zeng;  using their notation it states that $\INV = \INV_{\MT}$ (\cite{CSZ}, Prop.9).

 \begin{proposition} \label{INV=INVMT}
  Let $\sigma \in S_g$. Then
  \[
   \cl (\sigma) = 2(A_\sigma + A_{\sigma^{-1}}+B_\sigma) + C_\sigma + C_{\sigma^{-1}} + \#\{i;\, i < \sigma (i)\} + \#\{ i ; \, \sigma(\sigma (i)) < \sigma(i) < i \}
  \]
 \end{proposition}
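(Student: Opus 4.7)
The identity is purely combinatorial and the author notes that it is a reformulation of \cite{CSZ}, Proposition~9 (the equality $\INV=\INV_{\MT}$). The most efficient route is therefore to translate that result into the present notation, but for a self-contained argument I would classify the inversions of $\sigma$ directly, according to the order type of the quadruple $(i,j,\sigma(i),\sigma(j))$ inside $\{1,\ldots,g\}$.

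Concretely, expand $\cl(\sigma)=\#\{(i,j):i<j,\ \sigma(i)>\sigma(j)\}$ and partition this set according to how the four indices interleave, allowing for fixed points and coincidences such as $\sigma(i)=j$. For fully distinct quadruples there are six interleavings compatible with $i<j$ and $\sigma(j)<\sigma(i)$; the extreme case $i<j<\sigma(j)<\sigma(i)$ is counted by $A_\sigma$, and via the bijection $(i,j)\mapsto(\sigma^{-1}(j),\sigma^{-1}(i))$ exchanging $\sigma$ and $\sigma^{-1}$ the mirror case $\sigma(j)<\sigma(i)<i<j$ corresponds to $A_{\sigma^{-1}}$. The boundary case $j=\sigma(j)$ is picked up by $B_\sigma$.

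The remaining terms on the right-hand side account for the other order types. Specifically, $C_\sigma$ and $C_{\sigma^{-1}}$ handle the non-inversion configurations $i<j<\sigma(i)<\sigma(j)$ and (after the analogous bijection) $i<j$ with $\sigma(j)<i$, while $\#\{i:i<\sigma(i)\}$ and $\#\{i:\sigma(\sigma(i))<\sigma(i)<i\}$ pick out contributions indexed by single points, namely excedances and descending $3$-cycle patterns. Once every order type has been assigned to a specific term, verifying the identity reduces to checking that the coefficients balance case by case on a finite list of types; small examples such as the longest element, transpositions, and $3$-cycles already exhibit the general pattern and are good sanity checks.

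The main obstacle will be the bookkeeping across the boundary cases where $\{i,j\}\cap\{\sigma(i),\sigma(j)\}\neq\emptyset$, and in particular pinning down the role of $\#\{i:\sigma(\sigma(i))<\sigma(i)<i\}$, which does not arise from a generic interleaving of four distinct entries. If one instead cites \cite{CSZ} directly, the only remaining work is to match their $\INV_{\MT}$ statistic term by term with the right-hand side of the proposition.
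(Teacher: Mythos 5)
There is a genuine gap in the self-contained version of your argument. You propose to prove the identity by partitioning the inversion set $\{(i,j):i<j,\ \sigma(i)>\sigma(j)\}$ according to order types and then matching coefficients. But the right-hand side is not a weighted count of inversion configurations: $C_\sigma=\#\{(i,j):i<j<\sigma(i)<\sigma(j)\}$ and $C_{\sigma^{-1}}$ count \emph{non-inversions}, and $\#\{i:i<\sigma(i)\}$, $\#\{i:\sigma^2(i)<\sigma(i)<i\}$ are single-index statistics. No partition of the inversion set can produce these terms directly, so "checking that the coefficients balance case by case" cannot close the proof. What is needed is a genuine equinumerosity result converting one family of configurations into a different-looking one; this is exactly Lemma~8 of \cite{CSZ} (whose first identity is Clarke's Lemma~3, itself a short proof of a theorem of Foata and Zeilberger), which asserts
\[
\# \{(i,j):\, i\leq j<a_i,\ a_j>j\} \;=\; \# \{(i,j):\, a_i<a_j\leq i,\ a_j > j \},
\]
with $a_i=\sigma(i)$, together with its companion identity. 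This lemma is a nontrivial bijective fact, not bookkeeping of boundary cases, and your sketch neither states it nor supplies a substitute.

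The paper's actual proof follows the route you only gesture at: it first identifies the inversions of types $A_\sigma$, $A_{\sigma^{-1}}$, $B_{\sigma^{-1}}$, reduces to counting $\#\{(i,j):i<j,\ a_i>a_j,\ a_i>i,\ a_j\leq j\}$, splits this according to whether $j\geq a_i$ or $j<a_i$, and then applies the displayed lemma to trade one of the two resulting sets for a set that decomposes as $A_{\sigma^{-1}}+B_\sigma+C_{\sigma^{-1}}+\#\{i:\sigma^2(i)<\sigma(i)<i\}$; the $C$-terms and the excedance counts enter only through this exchange. Your fallback of citing \cite{CSZ}, Proposition~9 directly and matching $\INV_{\MT}$ term by term is legitimate and is essentially what the proposition's statement records, but as a proof strategy it must be accompanied by that translation; the "direct classification of inversions" you describe as the self-contained alternative would fail as written.
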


 \begin{proof}
  Since the notation in \cite{CSZ} is entirely different from ours, we give an elementary proof of the proposition. We will reduce the claim to the following lemma.
  \begin{lemma}[\cite{CSZ}, Lemma 8]
   Let $\sigma \in S_g$ be a permutation. Write $a_i = \sigma (i)$. Then
   \begin{eqnarray*}
    \# \{(i,j);\, i\leq j<a_i,a_j>j\} &=& \# \{(i,j);\, a_i<a_j\leq i, a_j > j \} \\
    \# \{(i,j);\, i\leq j<a_i,a_j\leq j \} &=& \# \{(i,j);\, a_i<a_j\leq i, a_j \leq j \}
   \end{eqnarray*}
  \end{lemma}
  The first equation of this lemma is proven in \cite{Clarke}, Lemma 3. Note that the second identity is a consequence of the first.
   
  Now let $\sigma \in S_g$. To shorten the notation we write $a_i = \sigma (i)$. Writing $\cl (\sigma)$ as number of inversions, i.e $\cl (\sigma) = \# \{(i,j);\, i<j, \sigma (i) > \sigma (j)\}$, we get
  \begin{eqnarray*}
   \cl (\sigma) - A_\sigma - A_{\sigma^{-1}} - B_{\sigma^{-1}} &=& \#\{(i,j);\, i<j, a_i >a_j, a_i > i, a_j \leq j\} \\
   &=& \#\{(i,j);\, i<j,a_i>a_j,a_i>i,a_j\leq j,j\geq a_i\}\\ & & + \#\{(i,j);\, i<j, a_i>a_j, a_i>i,a_j\leq j, j < a_i\} \\
   &=& \#\{(i,j);\, a_j<a_i\leq j, a_i > i\} \\ & & + \#\{(i,j);\, i<j<a_i, a_j \leq j\} \\
   &=& \#\{(i,j);\, i\leq j < a_i, a_j > j\} \\ & & + \#\{(i,j);\, a_i<a_j\leq i, a_j \leq j \} \\
   &=& (A_\sigma + C_\sigma + \#\{i;\, i<a_i\}) \\ 
   & & + (A_{\sigma^{-1}} + B_\sigma + C_{\sigma^{-1}}+\#\{i;\, a_{\sigma (i)} < a_i < i\}).
  \end{eqnarray*}
  Since $B_\sigma = B_{\sigma^{-1}}$, the assertion follows.
  \end{proof}

 \begin{corollary} \label{symgroupcombi}
  For every $\sigma \in S_g$ we have
  \begin{equation} \label{sgc}
   \cl (\sigma ) - 2(A_\sigma + A_{\sigma^{-1}} + B_\sigma) \geq \frac{1}{2}\cdot\#\{i;\, \sigma(i) \not=i\}.
  \end{equation}
 \end{corollary}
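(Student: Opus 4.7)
My plan is to feed the identity from Proposition \ref{INV=INVMT} into the inequality, drop the two nonnegative terms $C_\sigma$ and $C_{\sigma^{-1}}$, and then prove the remaining combinatorial inequality
\[
\#\{i;\, i<\sigma(i)\} + \#\{i;\, \sigma(\sigma(i))<\sigma(i)<i\} \;\geq\; \tfrac{1}{2}\#\{i;\,\sigma(i)\neq i\}
\]
by a direct counting argument on the non-fixed points of $\sigma$.

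To set this up, I would partition the set $\{i : \sigma(i) \neq i\}$ into three disjoint pieces according to the behavior of $i$, $\sigma(i)$, $\sigma(\sigma(i))$: namely
\[
S_1 := \{i : i<\sigma(i)\},\quad S_2 := \{i : \sigma(\sigma(i))<\sigma(i)<i\},\quad S_3 := \{i : \sigma(i)<i,\ \sigma(i)<\sigma(\sigma(i))\}.
\]
These are pairwise disjoint and exhaust the non-fixed points (note that $\sigma(\sigma(i))=\sigma(i)$ forces $i$ to be fixed, so no $i$ is lost). The first two terms on the left-hand side of the inequality are exactly $|S_1|$ and $|S_2|$, and $\#\{i:\sigma(i)\neq i\} = |S_1|+|S_2|+|S_3|$, so the inequality reduces to $|S_1|+|S_2|\geq |S_3|$.

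The key observation, which I expect to be the one nontrivial step, is that $\sigma$ restricts to an injection $S_3\hookrightarrow S_1$: if $i\in S_3$ and $j:=\sigma(i)$, then by definition $j<\sigma(j)$, so $j\in S_1$, and injectivity is automatic because $\sigma$ is a permutation. This gives $|S_3|\leq |S_1|\leq |S_1|+|S_2|$, which closes the argument. Combining everything,
\[
\cl(\sigma) - 2(A_\sigma+A_{\sigma^{-1}}+B_\sigma) \;=\; C_\sigma+C_{\sigma^{-1}}+|S_1|+|S_2| \;\geq\; |S_1|+|S_2| \;\geq\; \tfrac{1}{2}\bigl(|S_1|+|S_2|+|S_3|\bigr),
\]
which is the claim. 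The only place I need to be slightly careful is the disjointness and coverage of the partition, and the verification that the two sets singled out by Proposition \ref{INV=INVMT} really are $S_1$ and $S_2$; everything else is bookkeeping.
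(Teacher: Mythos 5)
Your proof is correct, but it takes a different route from the paper's. The paper also starts from Proposition \ref{INV=INVMT} and discards the nonnegative terms, but it keeps only the single term $\#\{i;\, i<\sigma(i)\}$ and then exploits the symmetry of the left-hand side under $\sigma\mapsto\sigma^{-1}$ (using $\cl(\sigma)=\cl(\sigma^{-1})$ and $B_\sigma=B_{\sigma^{-1}}$): the same bound applied to $\sigma^{-1}$ gives $\geq\#\{i;\,\sigma(i)<i\}$, and averaging the two bounds yields $\frac12\#\{i;\,\sigma(i)\neq i\}$. You instead keep both residual terms $|S_1|$ and $|S_2|$, partition the non-fixed points into $S_1\sqcup S_2\sqcup S_3$ (your observation that $\sigma(\sigma(i))=\sigma(i)$ forces $\sigma(i)=i$ correctly justifies the trichotomy), and close the gap with the explicit injection $S_3\hookrightarrow S_1$, $i\mapsto\sigma(i)$, which is valid since $i\in S_3$ gives $\sigma(i)<\sigma(\sigma(i))$, i.e.\ $\sigma(i)\in S_1$. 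Both arguments are equally short; the paper's buys its conclusion from an abstract symmetry and needs the identity $B_\sigma=B_{\sigma^{-1}}$ recorded earlier, while yours is self-contained within a single $\sigma$ and makes the combinatorial mechanism (where the factor $\frac12$ comes from) more transparent. Either is acceptable.
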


 \begin{proof}
  As a consequence of Lemma \ref{INV=INVMT} we get
  \[
   \cl (\sigma) - 2(A_\sigma + A_{\sigma^{-1}} + B_\sigma) \geq \#\{i;\, i<\sigma(i)\}.
  \]
  Now the left hand side does not change if we replace $\sigma$ by $\sigma^{-1}$. Hence it is also greater or equal $\#\{i;\, \sigma (i) <i\}$. Thus,
  \[
   \cl (\sigma) - 2(A_\sigma + A_{\sigma^{-1}} + B_\sigma) \geq \frac{1}{2}\cdot  (\#\{i;\, i<\sigma (i)\} + \#\{i;\, \sigma (i)< i\}) = \frac{1}{2}\cdot\#\{i;\, \sigma (i) \not=i\}
  \]
  which gives the desired result.
 \end{proof}

 \subsection{Calculation of $\dim \cA_I^{(d)}$} \label{sdimension}
 As in the preceding sections, we consider $W$ as subgroup of $S_{2g}$. Note that every element of $W$ is uniquely defined by its images of $1,\ldots ,g$.  In particular, so are its fixed points. So whenever we speak of fixed points of a Weyl group element, we only mean those which are smaller or equal to $g$ (unless stated otherwise).

 For any subset $F \subset \{ 1,2, \ldots , g \}$ let $W^F \subset W$ be the subgroup of elements whose fixed points are exactly the elements of $F$. Denote by $\Adm^F$ the preimage of $W^F$ under the canonical projection $pr: \Adm \to W,\, t^{x_0}\cdot w \mapsto w$.  Then by Proposition \ref{relationship}
 \[
  \Adm^{(d)} = \bigcup_{F \subset \{1,\ldots,g\}, \atop \# F = d} \Adm^F.
 \]
 
 We fix a non-negative integer $d \leq g$ and a set $F = \{f_1 < f_2 < \ldots < f_d\} \subset \{ 1,2,\ldots ,g\}$. Using Lemma \ref{mu-adm} we may index the preimage of any element in $W^F$ by $\{ 0,1 \}^d$. For every $w\in W^F$ we have $pr^{-1} (w) = \{t^{x_{w,v,0}}\cdot w;\, v \in \{0,1\}^d\}$ where
 \[
  x_{w,v,0}(i) = \left\{
  \begin{array}{ll}
   0 & \textnormal{if } w^{-1}(i) > i \\
   1 & \textnormal{if } w^{-1}(i) < i \\
   v(j) & \textnormal{if } i = f_j \\
   1-v(j) & \textnormal{if } i = 2g+1-f_j
  \end{array}
  \right.
 \]
 For $v\in\{0,1\}^d$ denote by $\Adm^{F,v}$ the subset of $\Adm^F$ of elements that are indexed by $v$. Then
 \[
  \Adm^F = \bigcup_{v \in \{0,1\}^d} \Adm^{F,v}
 \]
 and we obtain a one-to-one correspondence
 \begin{eqnarray*}
  W^F &\rightarrow& \Adm^{F,v} \\
  w &\mapsto& t^{x_{w,v,0}} \cdot w,
 \end{eqnarray*}
 i.e.\ an element of $\Adm^{F,v}$ is uniquely determined by its Weyl group - component.
 
 We fix a vector $v \in \{0,1\}^d$. To simplify the notation we establish the following notation: For integers $1 \leq i,j \leq g$ let
 \begin{eqnarray*}
  i \prec j &:\Lra& i<j \textnormal{ or } (i=j=f_k \textnormal{ and } v(k) = 1) \\
  i \succ j &:\Lra& i>j \textnormal{ or } (i=j=f_k \textnormal{ and } v(k) = 0) \\
  i \approx j &:\Lra& i = j \textnormal{ and } i \not\in F \\
  i \preccurlyeq j &:\Lra& i \prec j \textnormal{ or } i \approx j \\
  i \succcurlyeq j &:\Lra& i \succ j \textnormal{ or } i \approx j
 \end{eqnarray*}

 \begin{remark}
  There is also a geometrical interpretation of fixing $F$ and $v$. According to Proposition \ref{genestier-ngo} this is the same as fixing the $K_i$ up to isomorphism.
 \end{remark}

 We now identify $W$ with $\FF_2^g \rtimes S_g$, an element $\sigma \in S_g$ corresponds to a permutation $w \in W$ with $w(i) = \sigma(i),\, i=1,\ldots g$ and a vector $u \in \FF_2^g$ corresponds to an element $w$ with $w(i)=i$ if $u(i)=0$ and $w(i)=2g+1-i$ if $u(i) =1,\, i = 1,\ldots, g$. (Recall that the elements of $W$ are uniquely determined by their restriction to $\{ 1,\ldots , g\}$.)
 Denote by $S_g^F$ the subset of all $\sigma\in S_g$ for which there exists a $u\in\FF_2^g$ such that $u\sigma \in W^F$. Obviously it is the set of all permutations which fix the elements of $F$ . Let $\sigma \in S_g^F$. A vector $u \in \FF_2^g$ is called $\sigma$-admissible if $u\sigma \in W^F$, which is equivalent to
 \begin{myeqnarray}
  u(i) &=& 0 & \forall i \in F \\
  u(i) &=& 1 & \forall i : \sigma(i) \approx i.
 \end{myeqnarray}
 So if $i$ is a fixed point of $\sigma$ then $u(i)$ has a preset value, but we can choose the other components freely. However, this changes when we consider the maximal elements among them.

 \begin{lemma} \label{GY82}
  Let $\sigma \in S_g^F$, $u,u' \in \FF_2^g$ $\sigma$-admissible with $u' \leq u$ (Imposing the canonical order on $\FF_2$ = \{0,1\}). Then $t^{x_{(u'\sigma),v,0}}(u'\sigma)$ dominates $t^{x_{(u\sigma),v,0}}(u\sigma)$ w.r.t.\ the Bruhat order.
 \end{lemma}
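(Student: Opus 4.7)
The plan is to reduce, by an induction on the Hamming distance between $u$ and $u'$, to the case where they differ in a single coordinate, and then to apply Lemma \ref{wall} via an explicit affine reflection. The $\sigma$-admissibility conditions force any coordinate $i$ where $u$ and $u'$ differ to satisfy $i\notin F$, $\sigma(i)\neq i$ and $u(i)=1,\ u'(i)=0$; and $\sigma$-admissibility is preserved under flipping any single such bit from $1$ to $0$. Choosing a chain of single flips from $u$ down to $u'$ therefore reduces the claim to the case of a single flip at a coordinate $i_0$ of this type.

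In the single-flip case, write $w:=u\sigma$, $w':=u'\sigma$, and $i_1:=\sigma^{-1}(i_0)\neq i_0$. A direct check in $W=\FF_2^g\rtimes S_g$ gives $w=s\cdot w'$ with $s:=(i_0\ 2g+1-i_0)\in W$. Evaluating the defining formula for $x_{\cdot,v,0}$ at $i_0$ and $2g+1-i_0$, using that $w^{-1}(i_0)=2g+1-i_1$ while $(w')^{-1}(i_0)=i_1$, yields
\[
 x_{w,v,0}(i_0)=0,\quad x_{w,v,0}(2g+1-i_0)=1,\qquad x_{w',v,0}(i_0)=\delta_{i_1<i_0},\quad x_{w',v,0}(2g+1-i_0)=1-\delta_{i_1<i_0},
\]
while the two translation vectors coincide at every other coordinate. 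Setting $y:=t^{x_{w,v,0}}\cdot w$ and $x:=t^{x_{w',v,0}}\cdot w'$, a short computation of $y\cdot x^{-1}=t^{x_{w,v,0}-s(x_{w',v,0})}\cdot s$ identifies this element with the affine reflection $s_\alpha$ in the wall
\[
 H_\alpha=\{z\in X_*(T)_\RR:\ z(i_0)-z(2g+1-i_0)=c\},
\]
where $c=0$ if $i_1<i_0$ and $c=-1$ if $i_1>i_0$.

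Given the identity $y=s_\alpha\cdot x$, Lemma \ref{wall} reduces the desired inequality $y\leq x$ to the claim that the extended alcove $\by$ of $y$ lies on the same side of $H_\alpha$ as the base alcove $\omega$. A direct evaluation of the functional $z\mapsto z(i_0)-z(2g+1-i_0)$ on the vertices $\omega_k$ gives values in $\{0,-1\}$, so the open alcove $\omega$ lies in the slab $-1<z(i_0)-z(2g+1-i_0)<0$. Using $x_k^{(y)}=x_{w,v,0}-\sum_{j\leq k}e_{w(j)}$ together with the observation that $w(j)\in\{i_0,2g+1-i_0\}$ occurs precisely at $j=2g+1-i_1$ and $j=i_1$ respectively, the same functional on the vertices of $\by$ evaluates to
\[
 -1-\delta_{k\geq 2g+1-i_1}+\delta_{k\geq i_1}\in\{-1,0\},
\]
so $\by$ lies in the same open slab as $\omega$. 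Since this slab is entirely on one side of $H_\alpha$ regardless of whether $c=0$ or $c=-1$, Lemma \ref{wall} gives $y\leq x$, as required.

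The only real obstacle is the case-by-case computation of the translation vectors $x_{w,v,0}$ and $x_{w',v,0}$, which has to be carried out separately for $i_1<i_0$ and $i_1>i_0$ and requires tracking the behaviour of $u$ on $\{1,\ldots,g\}$ versus $\{g+1,\ldots,2g\}$ carefully. Everything else is an elementary manipulation of the explicit formulas already at hand.
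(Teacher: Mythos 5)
Your proof is correct and follows essentially the same route as the paper: the paper likewise reduces by induction to the case where $u$ and $u'$ differ in a single coordinate (necessarily with $\sigma(i)\neq i$) and then defers to Lemma 8.2 of \cite{GY2}, whose content is exactly the reflection-and-wall argument you carry out explicitly. Your computation identifying $y\cdot x^{-1}$ with the affine reflection in $\langle\beta_{i_0}^3,\cdot\rangle = c$ for $c\in\{0,-1\}$ and the verification via Lemma \ref{wall} that both alcoves lie in the slab $-1<z(i_0)-z(2g+1-i_0)<0$ are accurate, so you have in effect supplied the details the paper outsources to the citation.
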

 \begin{proof}
  By induction, it suffices to prove the assertion in the case when $u$ and $u'$ differ in exactly one, say the $i$-th, component. By the observation we made above, this implies that $\sigma (i) \not= i$. The rest of the proof is the same as the proof of Lemma 8.2 in \cite{GY2}.
 \end{proof}

 Let us agree on the following convention. Whenever we write an element $x \in \widetilde{W}$ as a product like $x = t^{x_0}\cdot w$ or $x=t^{x_0} \cdot (u\sigma)$, we assume that $x_0 \in X_*(T), u \in \FF_2^{g},\sigma \in S_{g}$ and $w \in W$. We call $x_0$, $w$, $u$ resp.\ $\sigma$ its $X_* (T)$-, $W$-, $\FF_2^g$- resp.\ $S_g$-component.

 As a consequence of Lemma \ref{GY82} and the description of $\Adm^{F,v}$ at the beginning of this section we get the following assertion.
 
 \begin{corollary} \label{maxelt}
  Assume that $t^{x_0}\cdot (u\sigma)$ is a maximal element with respect to the Bruhat order in $\Adm^{F,v}$. Then for all $1\leq i \leq g$
  \begin{eqnarray*}
   u(i) &=& \left\{ 
   \begin{array}{ll} 
    1  & \textnormal{ if } \sigma (i) \approx i \\
    0  & \textnormal{ if } \sigma (i) \not\approx i
   \end{array} \right. \\ 
   x_0 (i) &=& \left\{
   \begin{array}{ll}
    1 & \textnormal{ if } \sigma^{-1} (i) \prec i \\
    0 & \textnormal{ if } \sigma^{-1} (i) \succcurlyeq i
   \end{array} \right.
  \end{eqnarray*}
  More precisely any element $t^{\widetilde{x}_0}(\widetilde{u}\sigma)$ is dominated by $t^{x_0} (u\sigma)$ where $x_0$ and $u$ are given by above description.
 \end{corollary}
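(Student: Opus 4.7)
The plan is to reduce the claim to Lemma \ref{GY82} (which controls the $\FF_2^g$-component) combined with the explicit formula for $x_{w,v,0}$ given right before that lemma (which determines the $X_*(T)$-component once $w$ is fixed).

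First I would fix $\sigma \in S_g^F$ and isolate the $u$-part. The condition that $u\sigma \in W^F$ means $u$ is $\sigma$-admissible in the sense defined above: $u(i)=0$ for $i\in F$, and $u(i)=1$ whenever $\sigma(i)\approx i$ (so that $u\sigma$ does not acquire a fixed point outside $F$). In all other indices, namely those where $\sigma(i)\neq i$, the component $u(i)$ is free. By Lemma \ref{GY82}, decreasing $u$ (in any admissible coordinate) increases the corresponding element of $\widetilde{W}$ in the Bruhat order. Therefore a maximal element must have $u(i)=0$ in every coordinate where this is compatible with $\sigma$-admissibility, giving exactly the formula claimed.

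Next I would compute $x_0 = x_{w,v,0}$ by plugging this $u$ into the defining formula of $x_{w,v,0}$ recalled before Lemma \ref{GY82}. This requires knowing $w^{-1}(i)$ for $1\leq i\leq g$ in terms of $\sigma^{-1}$. A short case analysis does it: if $\sigma(i)\neq i$ then $u(i)=0$ by the first part, so $w^{-1}(i)=\sigma^{-1}(i)\neq i$, and the rule $x_0(i)=0$ or $1$ according to $w^{-1}(i)>i$ or $<i$ translates verbatim into $\sigma^{-1}(i)\succcurlyeq i$ or $\sigma^{-1}(i)\prec i$ (there is no $\approx$ ambiguity here). If $\sigma(i)=i$ and $i\notin F$, then $u(i)=1$ forces $w(i)=2g+1-i$, hence $w^{-1}(i)=2g+1-i>i$, so $x_0(i)=0$; and on the other hand $\sigma^{-1}(i)=i\notin F$ means $\sigma^{-1}(i)\approx i$, so $\sigma^{-1}(i)\succcurlyeq i$, consistent with the claim. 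Finally if $i=f_j\in F$, the prescribed value of $x_{w,v,0}(i)$ is $v(j)$, and $\sigma^{-1}(i)=i=f_j$ gives $\sigma^{-1}(i)\prec i \Leftrightarrow v(j)=1$, again matching.

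The last sentence of the corollary is not an additional assertion but a direct consequence of Lemma \ref{GY82}: for arbitrary admissible $(\widetilde u, \widetilde x_0)$ with the same $\sigma$-component, the vector $u$ defined above is coordinatewise $\leq \widetilde u$, so the lemma yields $t^{\widetilde x_0}(\widetilde u\sigma) \leq t^{x_0}(u\sigma)$ in the Bruhat order. There is no serious obstacle here; the only thing to be careful about is the bookkeeping in the $\approx$/$\prec$/$\succcurlyeq$ conventions, especially at indices in $F$, where the rule defining $x_{w,v,0}$ switches from the $w^{-1}(i)$-comparison to the $v$-prescription.
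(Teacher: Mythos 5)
Your proposal is correct and follows exactly the route the paper intends: the paper states this corollary as an immediate consequence of Lemma \ref{GY82} together with the parametrization of $\Adm^{F,v}$ by $W^F$, and your write-up simply fills in the case analysis (minimality of the $\sigma$-admissible vector $u$, then reading off $x_{w,v,0}$ coordinate by coordinate, including the $F$-coordinates governed by $v$). The bookkeeping at fixed points and at indices in $F$ is handled correctly, so nothing further is needed.
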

 
 \begin{definition} \label{possiblymaximal}
  We call a $\mu$-admissible element possibly maximal if it satisfies the condition of Corollary \ref{maxelt}.
 \end{definition}

 The following proposition is the core of this paper, it enables us to calculate the dimension of the $p$-rank strata and to determine the top-dimensional KR-strata.

 \begin{proposition} \label{main}
  Let $x=t^{x_0}(u\sigma)$ be possibly maximal. \smallskip \\
  \emph{(1)} For all $\sigma\in S_g^F$ we have
  \begin{eqnarray*}
   \cl (w) &=& \frac{g(g+1)}{2} +d - \# \{i;\sigma(i) = i\} -\cl (\sigma) \\ & & + 2 (A_\sigma + A_{\sigma^{-1}} + \# \{(i,f) \in\{1,\ldots ,g\}\times F;\, i<f<\sigma(i)\}).
  \end{eqnarray*}
  \emph{(2)} In particular, for all $\sigma$
  \[
   \cl (x) \leq \left\lfloor \frac{g^2+d}{2} \right\rfloor .
  \]
  \emph{(3)} Let $F=\{g-d+1,g-d+2,\ldots g\}$ and $\sigma = (1\enspace 2)(3\enspace 4)\ldots (g-d-1\enspace g-d)$ if $g-d$ is even resp. $\sigma = (1\enspace 2)(3\enspace 4)\ldots (g-d-2\enspace g-d-1)$ if $g-d$ is odd. Then
  \[
   \cl (x) = \left\lfloor \frac{g^2+d}{2} \right\rfloor .
  \]
 \end{proposition}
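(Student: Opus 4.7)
The approach is to apply the Iwahori--Matsumoto formula (Lemma \ref{IMF}) to $x=t^{x_0}(u\sigma)$ for part (1), deduce (2) by feeding the result into Corollary \ref{symgroupcombi}, and verify (3) by direct substitution in the formula from (1).

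For part (1), I would first record the explicit description of $x_0$ and $u$ coming from Definition \ref{possiblymaximal}: under the identification $X_*(T)\subset\ZZ^{2g}$ one has $x_0(i)\in\{0,1\}$ with $x_0(i)+x_0(2g+1-i)=1$, and $x_0(i)$ is determined by the position of $\sigma^{-1}(i)$ relative to $i$ (using $\prec,\succ,\approx$), while $u(i)=1$ exactly when $\sigma(i)=i$ and $i\notin F$. For each of the three families of positive roots $\beta^1_{i,j},\beta^2_{i,j},\beta^3_i$ one computes $\langle\beta,x_0\rangle\in\{-1,0,1\}$ and the sign $\epsilon_\beta\in\{0,1\}$ encoding whether $(u\sigma)^{-1}\beta$ is positive; because $x_0$ is minuscule, each summand $|\langle\beta,x_0\rangle+\epsilon_\beta|$ is $0$ or $1$, so $\ell(x)$ is simply a count of positive roots satisfying explicit combinatorial conditions on $\sigma,u,F,v$. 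Organising the count by root type and splitting according to whether the indices involved are fixed by $\sigma$, lie in $F$, or are moved, one recognises $\ell(\sigma)$ (inversions of $\sigma$), the numbers $A_\sigma,A_{\sigma^{-1}}$, and $\#\{(i,f)\in\{1,\ldots,g\}\times F:i<f<\sigma(i)\}$ as the nontrivial corrections; the constant term $\frac{g(g+1)}{2}+d-\#\{i:\sigma(i)=i\}$ arises from the ``half'' of root pairings that default to $1$ under the symmetry $x_0(i)+x_0(2g+1-i)=1$. This bookkeeping is the main technical step, and the chief nuisance is tracking signs in the three cases $\sigma^{-1}(i)\prec,\succ,\approx i$ consistently with the conventions fixing $u$; it is analogous to the case $d=0$ treated by G\"ortz--Yu in \cite{GY2}.

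Part (2) follows by combining (1) with Corollary \ref{symgroupcombi}. Since $\sigma\in S_g^F$ fixes every element of $F$, the set $F$ is contained in the fixed-point set of $\sigma$, so
\[
 \#\{(i,f)\in\{1,\ldots,g\}\times F:i<f<\sigma(i)\}\leq \#\{(i,j):i<j=\sigma(j)<\sigma(i)\}=B_\sigma.
\]
Substituting into the formula of (1) and applying Corollary \ref{symgroupcombi} gives
\[
 \ell(x)\leq \tfrac{g(g+1)}{2}+d-f_\sigma-\tfrac{1}{2}\bigl(g-f_\sigma\bigr)=\tfrac{g^2}{2}+d-\tfrac{f_\sigma}{2},
\]
where $f_\sigma=\#\{i:\sigma(i)=i\}\geq d$. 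The right-hand side is maximal when $f_\sigma=d$, giving $\ell(x)\leq\frac{g^2+d}{2}$, and since $\ell(x)\in\ZZ$ we may take the floor.

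For part (3) the specified $\sigma$ consists of disjoint adjacent transpositions acting on the smallest $g-d$ or $g-d-1$ indices, leaving $F=\{g-d+1,\ldots,g\}$ pointwise fixed (plus the index $g-d$ in the odd case). A short check shows $A_\sigma=A_{\sigma^{-1}}=0$ (both $i,j$ small forces $\sigma(i)=i\pm 1$, excluding $\sigma(j)<\sigma(i)$), and $\#\{(i,f):i<f<\sigma(i)\}=0$ because $\sigma(i)-i\leq 1$ whenever $\sigma(i)>i$. With $\ell(\sigma)=\lfloor(g-d)/2\rfloor$ and $f_\sigma=d$ or $d+1$ according to the parity of $g-d$, plugging into (1) yields $\ell(x)=\frac{g^2+d}{2}$ when $g-d$ is even and $\ell(x)=\frac{g^2+d-1}{2}$ when $g-d$ is odd, which is exactly $\lfloor\frac{g^2+d}{2}\rfloor$ in both cases. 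This exhibits the bound of (2) as sharp.
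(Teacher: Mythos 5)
Your proposal follows the paper's own proof essentially step for step: part (1) by expanding the Iwahori--Matsumoto formula over the three root families $\beta^1_{i,j},\beta^2_{i,j},\beta^3_i$ and the values of $u$, part (2) by bounding the $F$-count by $B_\sigma$ and invoking Corollary \ref{symgroupcombi} together with $\#\{i:\sigma(i)=i\}\geq d$, and part (3) by direct substitution, with all the numerical checks coming out right. The only slight imprecision is the claim that every summand $|\langle\beta,x_0\rangle+\epsilon_\beta|$ is $0$ or $1$ so that $\cl(x)$ is ``simply a count'' of roots: this needs the observation that the potentially problematic cases are vacuous for possibly maximal $x$, and the coefficient $2$ in front of $A_\sigma+A_{\sigma^{-1}}+\#\{(i,f):i<f<\sigma(i)\}$ arises because certain pairs $(i,j)$ contribute through both $\beta^1_{i,j}$ and $\beta^2_{i,j}$ — exactly the bookkeeping the paper carries out.
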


 \begin{proof}
  \emph{(1)} We calculate $\cl (t^{x_0}(u\sigma))$ using Lemma \ref{IMF}. Recall that
  \[
   \cl (w) = \sum_{\beta \in R^+ \atop w^{-1}\beta \in R^+ } |\langle \beta,x_0 \rangle | + \sum_{\beta \in R^+ \atop w^{-1}\beta \not\in R^+} |\langle \beta, x_0 \rangle + 1|
  \]
  In order to calculate the sum on the right hand side, we divide it up into ten smaller sums. In the process we distinguish between the cases
 \begin{enumerate}
  \item[(a)] $\beta \in \{ \beta_{ij}^1; 1\leq i < j \leq g \}$
  \item[(b)] $\beta \in \{ \beta_{ij}^2; 1\leq i < j \leq g \}$
  \item[(c)] $\beta \in \{ \beta_{i}^3; 1 \leq i \leq g \}$
 \end{enumerate}
 and the values of $\{ u(i), u(j) \} \in \FF_2^2$ when we consider the cases (a) and (b) resp. the value of $u(i) \in \FF_2^2$ when we consider the case (c). We denote these sums by $\Sigma_{a, (u(i),u(j))}$, $\Sigma_{b, (u(i), u(j))}$ and $\Sigma_{c, u(i)}$.
  
  (a) \emph{Summands coming from $\beta_{i,j}^1$.} Let $1\leq i<j \leq g$. We need to check when $w^{-1}\beta_{ij}^1$ is positive and calculate $\langle \beta_{ij}^1, x_0 \rangle$. Using the description of positive roots given in section 1.1 we get
  \begin{equation} \label{2.9.a}
   w^{-1}\beta_{ij}^1 > 0 \Lra u(i) = 0 \textnormal{ and } (u(j) = 1 \textnormal{ or } \sigma^{-1} (i) < \sigma^{-1} (j))
  \end{equation}
  \[
   \langle \beta_{ij}^1, x_0 \rangle = x_0(i) - x_0(j) = \left\{ 
   \begin{array}{rl}
    1 & \textnormal{ if } \sigma^{-1}(i)\prec i, \sigma^{-1}(j)\succcurlyeq j \\
    0 & \textnormal{ if } \sigma^{-1}(i)\prec i, \sigma^{-1}(j)\prec j \textnormal{ or } \sigma^{-1}(i)\succcurlyeq i, \sigma^{-1}(j)\succcurlyeq j \\
    -1 & \textnormal{ if } \sigma^{-1}(i)\succcurlyeq i, \sigma^{-1}(j)\prec j
   \end{array}
   \right. 
  \]  
  We will give the calculation of $\Sigma_{a,(0,0)}$ in full detail - the other sums are calculated analogously.
  \begin{eqnarray*}
   \Sigma_{a,(0,0)} &=& \sum_{1\leq i < j \leq g \atop u(i) = u(j) = 0 } \left( \sum_{w^{-1} \beta_{ij}^1 \in R^+} |\langle \beta_{ij}^1,x_0\rangle | + \sum_{w^{-1} \beta_{ij}^1 \not\in R^+} |\langle \beta_{ij}^1,x_0\rangle + 1| \right) \\
   &=& \sum_{1\leq i < j \leq g \atop \sigma (i) \not\approx i, \sigma(j) \not\approx j} \left( \sum_{\sigma^{-1}(i) <\sigma^{-1}(j)} |\langle \beta_{ij}^1, x_0 \rangle | + \sum_{\sigma^{-1}(i) > \sigma^{-1}(j)} |\langle \beta_{ij}^1,x_0\rangle + 1|\right) \\ 
  \end{eqnarray*}
  Here the second equality holds because by definition of ``possibly maximal'', $u(i) = 0$ is equivalent to $\sigma(i) \not\approx i$ and by (\ref{2.9.a}) we have equivalence between $w^{-1}\beta_{ij}^1 \in R^+$ and $ \sigma^{-1} (i) < \sigma^{-1} (j)$ in the case $u(i) = u(j) = 0$. Our calculation of $\langle \beta_{ij}^1, x_0\rangle$ implies that the first sum equals (we always sum over $i,j \in \{1,\ldots ,g\}$ unless stated otherwise)
  \begin{eqnarray}
   & & \# \{ (i,j);\, i<j, \sigma^{-1} (i) \prec i, \sigma^{-1} (j) \succ j, \sigma^{-1} (i) < \sigma^{-1} (j) \} \label{1} \\ 
   & & + \# \{ (i,j);\, i<j, \sigma^{-1} (i) \succ i, \sigma^{-1} (j) \prec j, \sigma^{-1} (i) < \sigma^{-1} (j) \} \label{2}
  \end{eqnarray}
  and that second sum equals
  \begin{eqnarray}
   & & \# \{ (i,j);\, i<j, \sigma^{-1} (i) \prec i, \sigma^{-1} (j) \prec j, \sigma^{-1} (i) > \sigma^{-1} (j) \} \label{3} \\
   & & + \# \{ (i,j);\, i<j, \sigma^{-1} (i) \succ i, \sigma^{-1} (j) \succ j, \sigma^{-1} (i) > \sigma^{-1} (j) \}. \label{4}
  \end{eqnarray}
  Using the same arguments we get 
  \begin{eqnarray}
   \Sigma_{a,(0,1)} &=& \# \{ (i, j);\, i<j, \sigma^{-1} (i) \prec i, \sigma^{-1} (j) \approx j \} \label{5} \\
   \Sigma_{a,(1,0)} &=& \# \{ (i, j);\, i<j, \sigma^{-1} (i) \approx i, \sigma^{-1} (j) \succ j \} \label{6} \\
   \Sigma_{a,(1,1)} &=& \# \{ (i, j);\, i<j, \sigma^{-1} (i) \approx i, \sigma^{-1} (j) \approx j \}. \label{7}
  \end{eqnarray}
  Altogether, $\sum_{i,j\in\FF_2} \Sigma_{a, (i,j)}$ equals 
  \begin{eqnarray}
   & & \# \{ (i, j);\, i<j, \sigma^{-1} (i) \preccurlyeq i, \sigma^{-1} (j) \succcurlyeq j, \sigma^{-1} (i) < \sigma^{-1} (j) \} \label{01}\\
   & & + \# \{ (i,j);\, i<j, \sigma^{-1} (i) \succ i, \sigma^{-1} (j) \prec j, \sigma^{-1} (i) < \sigma^{-1} (j) \} \label{02}\\
   & & + \# \{ (i,j);\, \sigma^{-1} (i) > \sigma^{-1} (j) \succ j > i \} \label{03} \\
   & & + \# \{ (i,j);\, \sigma^{-1} (j) < \sigma^{-1} (i) \prec i < j \}. \label{04}
  \end{eqnarray}
  Let us impose that until the end of this proof whenever we refer to a equation we only mean the right hand side. Then (\ref{01}) is the sum of  (\ref{1}), (\ref{5}), (\ref{6}) and (\ref{7}) while (\ref{02}), (\ref{03}) and (\ref{04}) are just reformulations of (\ref{2}), (\ref{3}) and (\ref{4}), respectively.
  
  (b) \emph{Summands coming from $\beta_{ij}^2$.} We proceed as in the first case.
  \begin{eqnarray*}
   w^{-1} \beta_{ij}^2 > 0 &\Lra& u(i)=u(j)=0 \textnormal{ or } (u(i)=0 \textnormal{ and } \sigma^{-1} (i) < \sigma^{-1} (j)) \\ & & \textnormal{ or } (u(j)=0 \textnormal{ and } \sigma^{-1} (i) > \sigma^{-1} (j))
  \end{eqnarray*}
  \[
   \langle \beta_{ij}^2 , x_0 \rangle = x_0 (i) + x_0(j) - 1 = \left\{
   \begin{array}{rl}
    1 & \textnormal{ if } \sigma^{-1}(i) \prec i, \sigma^{-1}(j) \prec j \\
    0 & \textnormal{ if } \sigma^{-1}(i) \prec i, \sigma^{-1}(j) \succcurlyeq j \textnormal{ or } \sigma^{-1}(i) \succcurlyeq i, \sigma^{-1}(j) \prec j\\
    -1 & \textnormal{ if } \sigma^{-1}(i) \succcurlyeq i, \sigma^{-1}(j) \succcurlyeq j
   \end{array} \right.
  \] 
  We conclude
  \begin{eqnarray}
   \Sigma_{b, (0,0)} &=& \# \{ (i,j);\, i<j, \sigma^{-1} (i) \prec i, \sigma^{-1} (j) \prec j, \sigma^{-1} (i) < \sigma^{-1} (j) \} \label{8}\\
   & & + \# \{ (i,j);\, i<j, \sigma^{-1} (i) \prec i, \sigma^{-1} (j) \prec j, \sigma^{-1} (i) > \sigma^{-1} (j) \} \label{9}\\
   & & + \# \{ (i,j);\, i<j, \sigma^{-1} (i) \succ i, \sigma^{-1} (j) \succ j, \sigma^{-1} (i) < \sigma^{-1} (j) \} \label{10}\\
   & & + \# \{ (i,j);\, i<j, \sigma^{-1} (i) \succ i, \sigma^{-1} (j) \succ j, \sigma^{-1} (i) > \sigma^{-1} (j) \} \label{11}\\
   \Sigma_{b,(0,1)} &=& \# \{ (i.j);\, i<j, \sigma^{-1} (i) \succ i, \sigma^{-1} (j) \approx j, \sigma^{-1} (i) < \sigma^{-1} (j) \} \label{12} \\
   \Sigma_{b,(1,0)} &=& \# \{ (i,j);\, i<j, \sigma^{-1} (i) \approx i, \sigma^{-1} (j) \prec j, \sigma^{-1} (i) < \sigma^{-1} (j) \} \label{13} \\
   \Sigma_{b,(1,1)} &=& 0. \nonumber
  \end{eqnarray}
  Altogether, $\sum_{i,j\in\FF_2} (\Sigma_{a, (i,j)} + \Sigma_{b,(i,j)})$ equals
  \begin{eqnarray}
   & & \# \{ (i,j);\, i<j,  \sigma^{-1} (i) < \sigma^{-1} (j) \} \label{05} \\
   & & +2\cdot \# \{ (i,j);\, \sigma^{-1} (i) > \sigma^{-1} (j) \succ j > i \} \label{06} \\
   & & +2\cdot \# \{ (i,j);\, \sigma^{-1} (j) < \sigma^{-1} (i) \prec i < j \}. \label{07}
  \end{eqnarray}
  Here (\ref{05}) is the sum of (\ref{01}), (\ref{02}), (\ref{8}), (\ref{10}), (\ref{12}) and (\ref{13}); (\ref{06}) is the sum of (\ref{03}) and (\ref{11}); and (\ref{07}) is the sum of (\ref{04}) and (\ref{9}).
  
  (c) \emph{Summands coming from $\beta_i^3$}
  \[
   w^{-1}\beta_i^3 > 0 \Lra u(i) = 0
  \]
  \[
   \langle \beta_i^3 , x_0 \rangle = 2x_0(i)-1 = \left\{ 
   \begin{array}{lr}
    1 & \textnormal{if } \sigma^{-1}(i) \prec i \\
    -1 & \textnormal{if } \sigma^{-1}(i) \succcurlyeq i
   \end{array}
   \right.
  \]
  
  Thus we get
  \begin{eqnarray*}
   \Sigma_{c,0} &=& \# \{ i;\, \sigma^{-1} (i) \prec i \} +\#\{ i;\, \sigma^{-1} (i) \succ i \} \\
   &=& \# \{i; \sigma^{-1} (i) \not\approx i\} \\
   \Sigma_{c,1} &=& 0.
  \end{eqnarray*}
    
  So the final sum is 
  \begin{eqnarray*}
   \cl (t^{x_0} w) &=& \# \{ i;\, \sigma(i) \not\approx i \} \\ & & + \# \{(i,j);\,i<j, \sigma^{-1} (i) < \sigma^{-1} (j) \} \\ & & +2\cdot \#\{(i,j);\, \sigma^{-1} (i) > \sigma^{-1} (j) \succ j > i \} \\ & & + 2\cdot \# \{(i,j);\, \sigma^{-1} (j) < \sigma^{-1} (i) \prec i < j\} \\
    &=& g + d -\# \{i;\, \sigma(i) = i\}  \\ & & +\frac{g(g-1)}{2} - \cl (\sigma) \\ & &  +2\cdot(A_\sigma + \{(j,k);\, \sigma^{-1} (j) < f_k < j, v(k)=1\}) \\ & & + 2\cdot (A_{\sigma^{-1}} + \{(i,k);\, i < f_k < \sigma^{-1} (i) , v(k) = 0\}) \\
    &=& \frac{g(g+1)}{2} +d - \# \{i;\sigma(i) = i\} -\cl (\sigma) \\ & & + 2 (A_\sigma + A_{\sigma^{-1}} + \# \{(i,k);\, i<f_k<\sigma(i)\})
  \end{eqnarray*}
  where the sum is taken over $i,j \in \{1,\ldots ,g\}$ and $k \in \{1, \ldots ,d\}$.
   
  \emph{(2)} The claim is a consequence of part (1) of the proposition and Corollary \ref{symgroupcombi}. Since $\# \{(i,k);\, i<f_k<\sigma (i) \} \leq B_\sigma$ we get 
  \begin{eqnarray*}
   \cl (t^{x_0} w) &\leq& \frac{g(g+1)}{2} + d - \#\{i;\, \sigma(i) = i \} - \frac{1}{2}\cdot(g - \# \{ i;\, \sigma (i) = i \}) \\
   &=& \frac{g^2}{2} + d - \frac{1}{2} \cdot \# \{i; \sigma (i) = i\} \\
   &\leq& \frac{g^2}{2} + d - \frac{d}{2} \\
   &=& \frac{g^2+d}{2}.
  \end{eqnarray*}
  Now $\cl (t^{x_0} w)$ is an integer, hence the claim follows.
   
  \emph{(3)} We apply the formula of part (1). Now
  \begin{eqnarray*}
   \cl (\sigma) &=& \left\lfloor \frac{g-d}{2} \right\rfloor \\
   A_\sigma = A_{\sigma^{-1}} &=& 0 \\
   \# \{(i,k);\, i < f_k < \sigma (i) \} &=&0 \\
   \# \{i;\, \sigma (i) = 0\} &=& d + \delta_{2 \nmid g-d}
  \end{eqnarray*}
  Hence
  \begin{eqnarray*}
   \cl (t^{x_0}\sigma) &=& \frac{g(g+1)}{2} + d - (d + \delta_{2 \nmid g-d}) - \frac{g-d-\delta_{2 \nmid g-d}}{2} \\
   &=& \frac{g^2+d - \delta_{2 \nmid g-d}}{2} \\
   &=& \left\lfloor \frac{g^2+d}{2} \right\rfloor
  \end{eqnarray*}
 \end{proof}

 \begin{remark}
 Let $x=t^{x_0}\cdot w$ be possibly maximal. Denote by $x'$ the preimage of $x$ w.r.t.\ the homomorphism $\widetilde{W}_{\GSp_{2g-2d}} \hookrightarrow \widetilde{W}$ induced by the embedding $\GSp_{2g-2d} \hookrightarrow \GSp_{2g}$ which adds the $k$-th rows and columns $(0^{(k-1)}\,1\,0^{(2g-k)})$ for all $k \in F$ and $2g+1-k \in F$. This amounts to removing the $f$-th and $2g+1-f$-th coordinates of the $X_* (T)$-component and skipping the fixed points of $w$ (here we also mean those which are greater than $g$). Proposition \ref{main}\teil{1} implies that
 \[
  \cl (x) - \cl(x') = {g+1 \choose 2} - {g-d+1 \choose 2}.
 \]
 In particular the difference is independent of $x$. So the length of $x$ does not depend on $v$ and to some extent it also does not depend on $F$.
 \end{remark}
 
 As a consequence of Corollary \ref{lengthdim} and part 2 and 3 of Proposition \ref{main} we obtain the first main result of this paper, which is part (1) of Theorem \ref{results} in the introduction.
 \begin{theorem} \label{dim}
  The dimension of $\cA_I^{(d)}$ is $\left\lfloor \frac{g^2+d}{2} \right\rfloor$.
 \end{theorem}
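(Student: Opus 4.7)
The plan is to assemble Theorem \ref{dim} directly from the combinatorial machinery already in place, with essentially no new content: Proposition \ref{main} does all the real work, and it remains only to translate its conclusion into a statement about $\dim \cA_I^{(d)}$.

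First, by Corollary \ref{lengthdim}, the dimension in question equals
\[
 \max_{x \in \Adm^{(d)}} \cl(x).
\]
So the theorem reduces to showing that this maximum is exactly $\lfloor \frac{g^2+d}{2}\rfloor$. I would split this into an upper bound and the construction of an element attaining it.

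For the upper bound, I would argue that it suffices to consider \emph{possibly maximal} $x \in \Adm^{(d)}$, since every element of $\Adm^{(d)}$ lies in some $\Adm^{F,v}$ (with $|F|=d$) and, by Corollary \ref{maxelt}, is dominated in the Bruhat order by the unique possibly maximal element with the same $S_g$-component; dominance in the Bruhat order only increases the length. Once restricted to possibly maximal elements, Proposition \ref{main}\teil{2} furnishes the desired bound $\cl(x) \leq \lfloor \frac{g^2+d}{2}\rfloor$ uniformly in $F$, $v$, and $\sigma$.

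For the matching lower bound I would simply invoke Proposition \ref{main}\teil{3}: taking $F = \{g-d+1,\dots,g\}$, any $v \in \{0,1\}^d$, and $\sigma$ the product of transpositions $(1\ 2)(3\ 4)\cdots$ running through the non-fixed part as specified there, the corresponding possibly maximal element lies in $\Adm^{(d)}$ and has length exactly $\lfloor \frac{g^2+d}{2}\rfloor$. Combining the two bounds and invoking Corollary \ref{lengthdim} once more finishes the proof. There is no real obstacle at this stage; all the genuine difficulty has already been absorbed into Proposition \ref{main}, whose proof combined the Iwahori--Matsumoto length formula with the delicate inequality $\cl(\sigma) - 2(A_\sigma + A_{\sigma^{-1}} + B_\sigma) \geq \tfrac12 \#\{i : \sigma(i)\neq i\}$ from Corollary \ref{symgroupcombi}.
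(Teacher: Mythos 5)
Your proposal is correct and follows exactly the route the paper takes: the paper derives Theorem \ref{dim} as an immediate consequence of Corollary \ref{lengthdim} together with parts (2) and (3) of Proposition \ref{main}, with the reduction to possibly maximal elements via Corollary \ref{maxelt} left implicit. You have simply made that implicit reduction explicit, which is a faithful reconstruction of the intended argument.
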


 \section{Comparison between the KR stratification and $p$-rank stratification} \label{sect3}
 We are going to prove the statement of Theorem \ref{results}\teil{2} next. Note that
 \[
  \overline{\cA_I^{(d)}} = \bigcup_{y\in\Adm^{(d)}} \overline{\cA_y} = \bigcup_{y\in\Adm^{(d)}}\bigcup_{x\leq y} \cA_x
 \]
 where the last equality holds because of Proposition \ref{KR}\teil{1}. So our strategy will be examining for every $\mu$-admissible $x$ the set of $\mu$-admissible $y$ which dominate $x$ w.r.t. the Bruhat order, respectively the $p$-rank on the KR strata associated to these $y$.  
 
 \subsection{Going up in $p$-rank} \label{sectgoingup}
 In this section we prove some first lemmas concerning elements dominating a given $\mu$-admissible element $x$. The main part of this section is the construction of a set of examples, which we will use most of the time when we want to construct dominating elements of given $p$-rank.
 
 Since the $p$-rank is no longer fixed, we also cannot assume $F$ and $v$ to be fixed any longer. Thus we have to introduce some more notation. Denote the set of fixed points of the $W$-component of an element $x\in\Adm$ by $\F (x)$. We also have to reformulate the notion of possibly maximal (cf.\ Definition \ref{possiblymaximal}). We call $x = t^{x_0} (u\sigma) \in \Adm$ possibly maximal if for every $i = 1,\ldots,g$
 \[
  u(i) = 1 \Rightarrow \sigma(i) = i.
 \]
 It is easy to see that this definition is equivalent the definition we gave earlier. Thus we can still use Corollary \ref{maxelt} implying that every $x\in\Adm$ is dominated by a possibly maximal element $y$ which has the same $S_g$-component such that $\F(x) = \F(y)$. In particular if $x\in\Adm^{(d)}$ then we also have $y\in\Adm^{(d)}$.
 
 \begin{lemma} \label{prank1}
   $\overline{\cA_I^{(d)}} \subseteq \bigcup_{d'\leq d} \cA_I^{(d')}$  
 \end{lemma}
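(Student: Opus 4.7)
The plan is to reduce the statement directly to Koblitz's theorem on $\cA_g$ via the projection $\pi\colon\cA_I\to\cA_g$, bypassing the combinatorics of $\Adm$ entirely. By construction $\cA_I^{(d)}=\pi^{-1}(\cA_g^{(d)})$, and Theorem \ref{koblitz}\teil{2} yields $\overline{\cA_g^{(d)}}=\bigcup_{d'\leq d}\cA_g^{(d')}$. Since $\pi$ is proper, in particular continuous, the preimage $\pi^{-1}(\overline{\cA_g^{(d)}})$ is a closed subset of $\cA_I$ containing $\cA_I^{(d)}$, hence it also contains $\overline{\cA_I^{(d)}}$. Unpacking,
\[
 \overline{\cA_I^{(d)}}\;\subseteq\;\pi^{-1}\bigl(\overline{\cA_g^{(d)}}\bigr)\;=\;\bigcup_{d'\leq d}\pi^{-1}\bigl(\cA_g^{(d')}\bigr)\;=\;\bigcup_{d'\leq d}\cA_I^{(d')},
\]
which is precisely the desired inclusion. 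So in fact no step of this argument is an obstacle: the lemma is essentially the straightforward half of Koblitz's theorem pulled back along a continuous map.

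A purely combinatorial proof is available but considerably more painful. Using the identity $\overline{\cA_I^{(d)}}=\bigcup_{y\in\Adm^{(d)}}\bigcup_{x\leq y}\cA_x$ together with Corollary \ref{relationship}, the claim becomes the assertion that whenever $x=t^{x_0}w\leq y=t^{y_0}w'$ in the Bruhat order on $\widetilde{W}$, the number of fixed points of $w$ in $\{1,\ldots,g\}$ is at most the corresponding number for $w'$. This is not obvious from the $G$-alcove description and I would expect it to require a case analysis on the covering relations — which is unnecessary here, since the transcendental route through $\pi$ is immediate. The genuinely delicate content of Theorem \ref{results}\teil{2} lies in the converse direction, namely determining exactly which strata of lower $p$-rank do (or do not) occur in $\overline{\cA_I^{(d)}}$, and that is where the combinatorial machinery built up around possibly maximal elements will be used in the sections that follow.
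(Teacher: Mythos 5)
Your argument is exactly the paper's proof: the right-hand side equals $\pi^{-1}(\overline{\cA_g^{(d)}})$ by Koblitz's theorem, which is closed by continuity of $\pi$ and contains $\cA_I^{(d)}$, hence contains its closure. (You even correct a small slip in the paper, which cites Theorem \ref{koblitz}\teil{1} where part \teil{2} is the relevant one.)
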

 \begin{proof}
  By Theorem \ref{koblitz}\teil{1}, the right hand side equals $\pi^{-1} (\overline{\cA_g^{(d)}})$ which is a closed set containing $\cA_I^{(d)}$.
 \end{proof}

 \begin{corollary} \label{lastminute}
  Let $0 \leq d < d' \leq g$, $x\in\Adm^{(d)}$ and $y\in\Adm^{(d')}$. Then $x \leq y$ if and only if $x$ and $y$ are related with respect to the Bruhat order.
 \end{corollary}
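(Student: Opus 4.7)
The plan is to observe that one direction is trivial (if $x \leq y$ then of course $x$ and $y$ are Bruhat-related), so the content of the corollary lies entirely in ruling out the possibility $y \leq x$ under the hypothesis $d < d'$. The key tool is Lemma~\ref{prank1}, together with the closure relation for KR strata from Proposition~\ref{KR}\teil{1}.

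First I would suppose, for contradiction, that $y \leq x$. By Proposition~\ref{KR}\teil{1}, this gives the inclusion $\cA_y \subseteq \overline{\cA_x}$. Since $x \in \Adm^{(d)}$, the stratum $\cA_x$ is contained in $\cA_I^{(d)}$, whence $\overline{\cA_x} \subseteq \overline{\cA_I^{(d)}}$. Applying Lemma~\ref{prank1} then yields
\[
 \cA_y \subseteq \overline{\cA_I^{(d)}} \subseteq \bigcup_{d''\leq d} \cA_I^{(d'')}.
\]
On the other hand, by assumption $y \in \Adm^{(d')}$, so $\cA_y \subseteq \cA_I^{(d')}$. Since the various $p$-rank strata $\cA_I^{(d'')}$ are pairwise disjoint and $d' > d$, this is a contradiction. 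Hence $y \leq x$ is impossible, and the only remaining Bruhat relation between $x$ and $y$ is $x \leq y$.

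There is no real obstacle here; the statement is essentially a direct bookkeeping consequence of Lemma~\ref{prank1} combined with the fact that the closure of a KR stratum is a union of KR strata indexed by the Bruhat-smaller admissible elements. The non-obvious content of the corollary is simply the direction of the inequality forced by the $p$-rank constraint, and the proof will be a brief paragraph.
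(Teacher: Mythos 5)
Your proof is correct and follows exactly the route the paper intends: the paper's own proof is the one-line remark that the corollary ``is an easy consequence of Lemma \ref{prank1} and Proposition \ref{KR}\teil{1}'', and you have simply spelled out the intended contradiction (using, implicitly, the nonemptiness of the stratum $\cA_y$, a fact the paper relies on throughout, e.g.\ in Corollary \ref{lengthdim}). Nothing further is needed.
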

 \begin{proof}
  This is an easy consequence of Lemma \ref{prank1} and Proposition \ref{KR}\teil{1}.
 \end{proof}

 \begin{definition}  
  Let $w=u\sigma \in W$ and $\sigma = Z_1 \cdots Z_n$ the decomposition into disjoint cycles (in the usual sense of the cycle decomposition in the symmetric group, including the cycles of order 1). We say that $Z$ is a cycle of $w$ if $Z\in\{Z_1,\ldots,Z_n\}$ such that either $\ord Z \geq 2$ or if $\ord Z = 1$, i.e. $Z=(i)$ where $i$ is a fixed point of $\sigma$, we have that $i$ is not a fixed point of $w$. If $x\in\Adm$ has $W$-component $w$ we also call $Z$ a cycle of $x$. The set of cycles of $x$ is denoted by $\Z(x)$.
 \end{definition}
 
 By construction  $\Z(x)$ is uniquely determined by $\F(x)$ and the $S_g$-component $\sigma$ of $x$. The converse is also true: Since $\Z(x)$ determines the cyclic decomposition of $\sigma$, it also determines $\sigma$. Now an integer $1\leq i \leq g$ occurs in a (unique) cycle of $x$ if and only if $i \not\in \F(x)$, i.e.\ $\F(x)$ is the set of all integers $1\leq i \leq g$ which do not occur in a cycle of $\Z(x)$. 
 
 As a consequence we may reformulate our conclusion of Corollary \ref{maxelt} as follows. For every $\mu$-admissible $x$ there exists a possibly maximal element $y\in\Adm$ such that $x\leq y$ and $\Z(x) = \Z(y)$.
 
 Also note that our new description of $\F(x)$ yields that for any $x\in\Adm^{(d)}$ 
 \[
  \sum_{Z\in\Z(x)} \ord Z = g-d.
 \]

 We simplify the notation for the coming calculations. Denote by $s_i,s_{ij} \in W$ the reflections corresponding to the roots $\beta_i^3$ resp.\ $\beta_{ij}^1$. With respect to the identification $W=\FF_2^g \rtimes S_g$ we have
 \begin{eqnarray*}
  s_i &=& (0^{(i-1)}\, 1 \, 0^{(g-i)})\cdot \id \\
  s_{i,j} &=& (0^{(g)}) \cdot (i \enspace j)
 \end{eqnarray*} 
 
 \begin{lemma}\label{going up}
  Let $x = t^{x_0} w = t^{x_0} (u\sigma) \in \Adm$ be possibly maximal, $Z = (c_1 \,\, \cdots \,\, c_l) \in \Z(x)$. Then there exists a possibly maximal $y\in\Adm$ with $x\leq y$ satisfying the following additional assumption depending on $Z$. \\ \smallskip
   \emph{(1)} If $\ord Z = 1$ then 
   \begin{eqnarray*}
    \F(y) &=& \F(x) \cup \{c_1\} \\
    \Z(y) &=& \Z(x) \setminus \{Z\}. \quad\,\,
   \end{eqnarray*}
   \emph{(2)} If $\ord Z = 2$ then 
   \begin{eqnarray*}
   \F(y) &=& \F(x) \cup \{c_1, c_2\} \\
   \Z(y) &=& \Z(x) \setminus \{Z\}
   \end{eqnarray*}
   \emph{(3)} If $\ord Z \geq 3$ then there exists an integer $1\leq k \leq l$ such that 
   \begin{eqnarray*}
   \quad\,\,\, \F(y) &=& \F(x) \cup \{c_k\} \\
   \Z(y) &=& \Z(x) \setminus \{Z\} \cup \{Z'\}
   \end{eqnarray*}
    where $Z' = (c_1 \,\, \cdots\,\, c_{k-1}\enspace c_{k+1}\,\, \cdots\,\,  c_m)$.
 \end{lemma}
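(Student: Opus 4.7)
The plan is to construct the required $y$ in each case as an explicit element of $\widetilde{W}$, verify the combinatorial data $\F(y)$ and $\Z(y)$, and check $x \leq y$ using Lemma \ref{wall}. The Bruhat check always reduces to showing $\langle \alpha, x_0 \rangle < 0$ for a suitable positive root $\alpha$, which places $\bx$ on the same side of $H_\alpha$ as the base alcove (which sits in the anti-dominant chamber).

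\emph{Cases (1) and (2).} In case \emph{(1)}, possible maximality forces $u(c_1)=1$ and $x_0(c_1)=0$. Set $y:=s_{c_1}\cdot x$. A direct computation in $\FF_2^g\rtimes S_g$ shows the $W$-component of $y$ is $(u-e_{c_1})\sigma$, which fixes $c_1$; possible maximality is checked using $v_y(c_1)=1$, and Lemma \ref{wall} applies via $\langle \beta_{c_1}^3, x_0\rangle = 2x_0(c_1)-1=-1$. In case \emph{(2)}, WLOG $c_1<c_2$, so $u(c_1)=u(c_2)=0$, $x_0(c_1)=0$, $x_0(c_2)=1$. Set $y:=s_{c_1,c_2}\cdot x$. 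The $W$-component becomes $u\cdot (c_1\ c_2)\sigma$, fixing both $c_1$ and $c_2$; possible maximality uses $v_y(c_1)=1$, $v_y(c_2)=0$, and Lemma \ref{wall} applies via $\langle \beta_{c_1,c_2}^1, x_0\rangle = -1$.

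\emph{Case (3).} Choose $k$ so that $c_k$ is a local minimum of the cyclic sequence, i.e.\ $c_{k-1}>c_k<c_{k+1}$; such a $k$ exists, e.g.\ with $c_k=\min_j c_j$. Let $y$ be the possibly maximal element with $\sigma$-component $\sigma_y=\sigma\cdot(c_{k-1}\ c_k)$ (this removes $c_k$ from $Z$ and produces $Z'$), with $v_y(c_k)=0$ if $c_{k-1}<c_{k+1}$ or $v_y(c_k)=1$ if $c_{k-1}>c_{k+1}$, and with the other coordinates of $v_y$ equal to those of $v$. By construction $\F(y)=\F(x)\cup\{c_k\}$ and $\Z(y)=\Z(x)\setminus\{Z\}\cup\{Z'\}$. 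A direct calculation, splitting into the two subcases based on the ordering of $c_{k-1}$ and $c_{k+1}$, shows that $y\cdot x^{-1}$ equals $t^\lambda s_\alpha$ with $\alpha=\beta_{c_k,c_{k+1}}^1$ and $\lambda\in\ZZ\cdot\alpha^\vee$, hence is an affine reflection; since $\langle \alpha, x_0\rangle = x_0(c_k)-x_0(c_{k+1}) = -1$, Lemma \ref{wall} yields $x\leq y$.

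The main obstacle is case \emph{(3)}, where the choice of $k$ and $v_y(c_k)$ is delicate: one must pick both so that $y\cdot x^{-1}$ collapses to a single affine reflection. The case analysis on $c_{k-1}$ versus $c_{k+1}$ is essential, as is the verification that the resulting $y$ is $\mu$-admissible (which follows from possible maximality together with Theorem \ref{KR453}).
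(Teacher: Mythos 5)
Your construction of $y$ coincides with the paper's in every case: $y=s_{c_1}\cdot x$ in (1), $y=s_{c_1,c_2}\cdot x$ in (2), and in (3) the same choice of a local minimum $c_{k-1}>c_k<c_{k+1}$ with the same two subcases (your $\sigma\cdot(c_{k-1}\enspace c_k)$ is exactly the paper's $(c_k\enspace c_{k+1})\sigma$ resp.\ $\sigma\cdot(c_k\enspace c_{k-1})$, i.e.\ $y=s_{c_k,c_{k+1}}\cdot x$ resp.\ $y=x\cdot s_{c_k,c_{k-1}}$). The $\mu$-admissibility of $y$, which you defer to a one-line remark at the end, is what the paper actually verifies via the criterion of Lemma \ref{mu-adm} at the coordinates that change; your claim there is correct and the check is routine, but it is the main computational content of case (3) and should not be waved off.

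The one genuine gap is in your verification of $x\leq y$. The paper does not use Lemma \ref{wall} here at all: it invokes Corollary \ref{lastminute} (resting on Lemma \ref{prank1}), so that once $y=s\cdot x$ for an affine reflection $s$ and $y$ has strictly more fixed points, the inequality $x\leq y$ comes for free — relatedness plus higher $p$-rank forces the direction. Your direct wall-crossing check is fine in cases (1), (2) and in the subcase $c_{k-1}>c_{k+1}$ of (3), because there $yx^{-1}$ is the finite reflection $s_\alpha$, the relevant wall is $\{\langle\alpha,v\rangle=0\}$, and $\langle\alpha,x_0\rangle=-1$ forces the whole alcove of $x$ into the half-space $\{\langle\alpha,v\rangle<0\}$ containing the base alcove. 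But in the subcase $c_{k-1}<c_{k+1}$ you yourself compute $yx^{-1}=t^{\lambda}s_\alpha$ with $\lambda=-\alpha^\vee$, whose wall is $\{\langle\alpha,v\rangle=-1\}$; since $\langle\alpha,x_0\rangle=-1$, the vertex $x_0$ lies \emph{on} that wall, and the stated computation does not decide on which side of it the alcove of $x$ sits, so Lemma \ref{wall} cannot yet be applied. To close this you need one further observation, e.g.\ that $w^{-1}\alpha=e^*_{c_{k-1}}-e^*_{c_k}$ is a negative root (as $c_{k-1}>c_k$), whence $\langle\alpha,\cdot\rangle$ takes values in $(\langle\alpha,x_0\rangle,\langle\alpha,x_0\rangle+1)=(-1,0)$ on the alcove of $x$, the base-alcove side of the wall; alternatively, adopt the paper's route through Corollary \ref{lastminute} and avoid the wall computation entirely.
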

 \begin{proof}
 Surely the condition on $\F(y)$ follows from the condition on  $\Z(y)$. Now observe that due to Corollary \ref{lastminute} we can replace the demand that $y$ dominates $x$ with respect to the Bruhat order by the weaker demand that they are related. For this it is sufficient that $y = s\cdot x$ or equivalently $y = x\cdot s$ where $s$ is an affine reflection.
 
 In each of the three cases the fact that $y$ is possibly maximal will be an easy consequence of the fact that $x$ is possibly maximal; its verification will therefore be omitted. 
 
 \emph{(1)} Consider $y = s_{c_1}x$. Since $\sigma (c_1) = c_1, u(c_1) = 1$, we get that $c_1$ is a fixed point of $y$ implying the condition on $\Z(y)$. Now by Theorem \ref{KR453} an element of the extended affine Weyl group is $\mu$-admissible if and only if the corresponding $G$-alcove is minuscule. Therefore the criterion of Lemma \ref{mu-adm} can also be used to check whether an element of $\widetilde{W}$ is $\mu$-admissible. In our case the fact that $x$ meets this criterion obviously implies that $y$ satisfies it; thus $y\in\Adm$.
 
 \emph{(2)} Let $y = s_{c_1,c_2}\cdot x$. We have $\sigma (c_1) = c_2, \sigma (c_2) = c_1$ (and thus $u(c_1) = u(c_2) = 0$ since $x$ is possibly maximal). Therefore we get that  $\F (y) = \F(x) \cup \{c_1, c_2\}$ and $\Z(y) = \Z(x) \setminus \{Z\}$. Using the same argumentation as in (1), we see that the fact that $y$ is $\mu$-admissible follows from Lemma \ref{mu-adm}.
 
 \emph{(3)} Since $l \geq 3$, there exists a ``local minimum''   $c_{k-1} > c_k < c_{k+1}$ with $c_{k-1} \not= c_{k+1}$ where $c_0 := c_l$ and $ c_{l+1} := c_1$. If $c_{k-1} > c_{k+1}$ let $y = s_{c_k c_{k+1}} x$, otherwise let $y = x s_{c_k,c_{k-1}}$. One easily checks that the conditions on $\F(y)$ and $\Z(y)$ are met, so that we only have to prove that $y$ is $\mu$-admissible.
 
  In the case $c_{k-1} > c_{k+1}$ let $y=t^{y_0}w'$. Note that since $x$ is possibly maximal we have $w^{-1} (c_i) = \sigma^{-1} (c_i) = c_{i-1}$ for all $i$. To prove that $y$ is $\mu$-admissible it suffices to check the criterion of Lemma \ref{mu-adm} for the coordinates where the $X_*(T)$-component or the preimage of the $W$-component have changed, i.e. at $c_k$ and $c_{k+1}$. Since $c_k$ is a fixed point of $w'$ there is nothing to check here. Thus
  \begin{eqnarray*}
   w'^{-1} (c_{k+1}) &=& c_{k-1} > c_{k+1} \\
   y_0 (c_{k+1}) &=& x_0 (c_k) = 0
  \end{eqnarray*}
  implies that $y\in\Adm$.
  
  In the case $c_{k-1} < c_{k+1}$, we have $y=t^{x_0}w'$ with the same $w'$ as in the previous case. Thus it again suffices to check the criterion of Lemma \ref{mu-adm} at the coordinate $c_{k+1}$. Indeed,
  \begin{eqnarray*}
   w'^{-1}(c_{k+1}) &=& c_{k-1} < c_{k+1} \\
   y_0 (c_{k+1}) &=& x_0 (c_{k+1}) = 1
  \end{eqnarray*}
 \end{proof}

 \begin{corollary} \label{cor going up}
  Let $x = t^{x_0} w = t^{x_0} (u\sigma)$ be an element of $\Adm^{(d)}$ such that $\sigma$ is not a product of $\frac{g-d}{2}$ disjoint transpositions. Then there exists a $y \in \Adm^{(d+1)}$ such that $x \leq y$ with respect to the Bruhat order.
 \end{corollary}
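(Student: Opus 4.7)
The plan is to reduce immediately to the possibly maximal case and then invoke Lemma \ref{going up}. By Corollary \ref{maxelt} and the discussion at the start of subsection \ref{sectgoingup}, there is a possibly maximal $x' \in \Adm$ with $x \leq x'$ and $\Z(x') = \Z(x)$. In particular $x'$ has the same $S_g$-component $\sigma$ as $x$ and the same number of $w$-fixed points, so $x' \in \Adm^{(d)}$. Since the Bruhat order is transitive, it suffices to produce $y \in \Adm^{(d+1)}$ dominating $x'$. Hence we may assume from the start that $x$ itself is possibly maximal.

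Next I would translate the hypothesis on $\sigma$ into a condition on $\Z(x)$. Recall from the discussion preceding the corollary that
\[
 \sum_{Z \in \Z(x)} \ord Z = g - d.
\]
The permutation $\sigma$ is a product of $(g-d)/2$ disjoint transpositions precisely when every cycle in $\Z(x)$ has order $2$ (this forces $g-d$ even and rules out any order-$1$ cycle, which would correspond to a fixed point $i$ of $\sigma$ with $u(i)=1$). Since by hypothesis $\sigma$ is \emph{not} such a product, $\Z(x)$ must contain either a cycle of order $1$ or a cycle of order $\geq 3$.

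Finally, in the first case I would apply Lemma \ref{going up}\teil{1} to the exceptional order-$1$ cycle, producing a possibly maximal $y \in \Adm$ with $x \leq y$ and $\F(y) = \F(x) \cup \{c_1\}$. In the second case I would apply Lemma \ref{going up}\teil{3} to the cycle of order $\geq 3$, producing $y$ with $\F(y) = \F(x) \cup \{c_k\}$. In both cases $|\F(y)| = d+1$, so by Corollary \ref{relationship} we have $y \in \Adm^{(d+1)}$, which completes the argument.

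The only delicate point is the initial reduction to a possibly maximal representative, but this is already packaged for us by Corollary \ref{maxelt}; once that is in hand, the corollary is essentially a direct case distinction using Lemma \ref{going up}. The hypothesis on $\sigma$ is tight in the sense that if $\sigma$ \emph{were} a product of $(g-d)/2$ disjoint transpositions, the only option available from Lemma \ref{going up} would be part \teil{2}, which produces an element of $\Adm^{(d+2)}$ rather than $\Adm^{(d+1)}$; this is why the statement of the corollary excludes exactly that case.
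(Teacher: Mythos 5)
Your proposal is correct and follows essentially the same route as the paper: reduce to a possibly maximal representative with the same cycles via Corollary \ref{maxelt}, observe that the hypothesis on $\sigma$ is equivalent to $\Z(x)$ containing a cycle of order $1$ or of order $\geq 3$, and apply Lemma \ref{going up}\teil{1} or \teil{3}. The paper's proof is just a terser version of the same argument, so there is nothing to add.
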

 \begin{proof}
  Since $x$ is dominated by possible maximal element with the same cycles, we may assume without loss of generality that $x$ is possibly maximal. Since the assertion on $\sigma$ is equivalent to postulate that $x$ at least one cycle of order unequal to $2$, the assertion follows from Lemma \ref{going up}\teil{1} or \ref{going up}\teil{3}.
 \end{proof}
 
 \subsection{Top-dimensional KR strata} \label{secttopdim}
  In this section we give an explicit description of the elements of maximal length, which by definition correspond to the top-dimensional KR strata. As an immediate result of the previous section we get:
 \begin{lemma} \label{top-dim1}
  Let $x\in\Adm^{(d)}$ be of maximal length and $\sigma$ its $S_g$-component. Then $x$ is obviously possibly maximal. \smallskip \\
  \emph{(1)} If $g-d$ is even, $\sigma$ is the product of $\frac{g-d}{2}$ pairwise disjoint transpositions. \smallskip \\
  \emph{(2)} If $g-d$ is odd, $\sigma$ is either the product of $\lfloor\frac{g-d}{2}\rfloor$ disjoint transpositions or the product of $\lfloor \frac{g-d}{2}-1\rfloor$ transpositions and a cycle of order 3 which are pairwise disjoint.
 \end{lemma}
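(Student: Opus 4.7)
The plan is to exploit Lemma \ref{going up} together with the dimension formula $\dim\cA_I^{(d)} = \lfloor (g^2+d)/2 \rfloor$ from Theorem \ref{dim}. First, by Corollary \ref{maxelt} the possibly maximal element having the same $\F$ and same $S_g$-component as $x$ dominates $x$ and therefore has length at least $\cl(x)$; by maximality of $\cl(x)$ and the length-monotonicity of Bruhat dominance, $x$ must coincide with this element, which is the ``obvious'' part of the statement. A direct parity computation yields the key identity
\begin{equation*}
\dim\cA_I^{(d+1)} =
\begin{cases}
\dim\cA_I^{(d)} & \text{if } g-d \text{ is even}, \\
\dim\cA_I^{(d)}+1 = \dim\cA_I^{(d+2)} & \text{if } g-d \text{ is odd}.
\end{cases}
\end{equation*}
Combined with Corollary \ref{lengthdim} this says: when $g-d$ is even, no element of $\Adm^{(d+1)}$ may strictly dominate $x$; when $g-d$ is odd, elements of $\Adm^{(d+1)}$ may strictly dominate $x$, but no element of $\Adm^{(d+2)}$ can strictly dominate such an intermediate element.

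For case (1), suppose $\sigma$ admits a cycle of length $1$ or a cycle of length $\geq 3$. Then Lemma \ref{going up}(1), respectively (3), produces a possibly maximal $y>x$ in $\Adm^{(d+1)}$; the dominance is strict because $y$ and $x$ have distinct cycle data and possibly maximal elements are determined by these. Hence $\cl(y) \geq \cl(x)+1 > \dim\cA_I^{(d+1)}$, contradicting Corollary \ref{lengthdim}. Therefore every cycle of $\sigma$ has length exactly~$2$, and since $\F(x)$ has cardinality $d$ and exhausts the fixed points of $\sigma$ on $\{1,\ldots,g\}$, the cycles constitute $(g-d)/2$ disjoint transpositions.

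For case (2), the strategy is to show that any configuration of $\sigma$ other than the two listed admits a ``double jump'', i.e.\ a chain $x<y<z$ with $z\in\Adm^{(d+r)}$, $r\geq 2$, forcing $\cl(z)\geq\cl(x)+2 > \dim\cA_I^{(d+2)}$ and contradicting Corollary \ref{lengthdim}. Two independent applications of Lemma \ref{going up} are available if $\sigma$ has two length-$1$ cycles, two cycles of length $\geq 3$, or both a length-$1$ cycle and a length-$\geq 3$ cycle. A single cycle $Z$ of length $l\geq 4$ likewise permits two iterated applications of part~(3): the intermediate $y$ is again possibly maximal and still contains a residual cycle of length $l-1\geq 3$ to which part~(3) applies once more. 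After excluding these configurations, the only surviving possibilities are: (a)~all cycles of length $2$, which forces $g-d$ even and so is incompatible with case~(2); (b)~exactly one length-$1$ cycle and the rest transpositions, giving $\lfloor(g-d)/2\rfloor$ transpositions; or (c)~no length-$1$ cycle and exactly one $3$-cycle with the rest transpositions, giving $\lfloor(g-d)/2\rfloor-1$ transpositions and one $3$-cycle. The main technical point is the iteration of Lemma \ref{going up}(3) on a single long cycle; this is made possible only because Lemma \ref{going up} guarantees that its output is again possibly maximal, so the lemma may be reapplied to the intermediate element.
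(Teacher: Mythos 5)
Your proof is correct. Part (1) is essentially the paper's argument: negate the conclusion, invoke Corollary \ref{cor going up} (equivalently Lemma \ref{going up}(1) or (3)) to produce a strictly dominating element of $\Adm^{(d+1)}$, and contradict the dimension formula via Corollary \ref{lengthdim}; the identification of $x$ with its possibly maximal dominator is also the intended reading of the ``obvious'' clause. For part (2), however, you take a genuinely different route. The paper applies Lemma \ref{going up} \emph{once} to obtain a $y\in\Adm^{(d+1)}$ which, by the dimension count, is automatically of maximal length there; since $g-(d+1)$ is even, part (1) applies to $y$ and forces its $S_g$-component to be an involution with fixed-point set exactly $\F(y)$, and the two admissible shapes of $\sigma$ then drop out by reverse-engineering the single going-up step (a removed $1$-cycle leaves $\sigma$ unchanged; a shortened cycle must have had order $3$). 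You instead enumerate all ``bad'' cycle types and kill each by exhibiting a chain $x<y<z$ with $z\in\Adm^{(d+2)}$, using that $\dim\cA_I^{(d+2)}=\dim\cA_I^{(d)}+1$. Both arguments rest on the same three ingredients (Lemma \ref{going up}, Theorem \ref{dim}, Corollary \ref{lengthdim}), and your observation that Lemma \ref{going up} outputs a possibly maximal element with controlled $\Z(y)$ --- so that it can be iterated --- is exactly what makes the double jump legitimate. What the paper's version buys is brevity: part (1) applied to the intermediate element performs your case enumeration implicitly, whereas you must check by hand that the surviving cycle types are precisely $\{1,2,\dots,2\}$ and $\{3,2,\dots,2\}$. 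What your version buys is that part (2) does not logically depend on part (1). One small caution: when you say ``$\sigma$ admits a cycle of length $1$'', you must mean a cycle of $x$ in the sense of $\Z(x)$, i.e.\ a fixed point of $\sigma$ \emph{outside} $\F(x)$; fixed points lying in $\F(x)$ are not cycles of $x$ and Lemma \ref{going up}(1) does not apply to them. Your closing remark that $\F(x)$ exhausts the fixed points of $\sigma$ shows you are aware of this, so it is only a matter of phrasing.
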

 
 \begin{proof}
 \emph{(1)} If $\sigma$ was not the product of $\frac{g-d}{2}$ pairwise disjoint transpositions, Corollary \ref{cor going up} would give us a $y \in \Adm^{(d+1)}$ with $\cl (y) \geq \cl(x)$. Contradiction to Theorem \ref{dim}.
 
 \emph{(2)} Since $g-d$ is odd we can either use Lemma \ref{going up}\teil{1} or \ref{going up}\teil{3} on $x$. Using the dimension formula once again, we see that this gives us an element $y\in\Adm^{(d+1)}$ of maximal length. Thus the $S_g$-component of $y$ is the product of $\frac{g-d}{2}$ pairwise disjoint transpositions. In the case where we used \ref{going up}\teil{1} this implies that $\sigma$ is the product of $\lfloor \frac{g-d}{2} \rfloor$ pairwise disjoint transpositions, if we used \ref{going up}\teil{3} $\sigma$ is a product of $\lfloor \frac{g-d}{2} \rfloor -1$ transpositions and a cycle of order 3, which are pairwise disjoint.
 \end{proof}
 
 So we may reduce our considerations to $x \in \Adm$ respectively $\sigma \in S_g$ which are of the form described in Lemma \ref{top-dim1}. Next we check when the left hand side and the right hand side of (\ref{sgc}) are (almost) equal.

 \begin{lemma} \label{sgc_even}
  If $\sigma \in S_g$ is an involution, equality occurs in (\ref{sgc}) if and only if $C_\sigma = 0$.
 \end{lemma}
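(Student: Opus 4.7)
My plan is to substitute the involution hypothesis into Proposition \ref{INV=INVMT} and see exactly which terms survive. Since $\sigma = \sigma^{-1}$, we immediately get $A_\sigma = A_{\sigma^{-1}}$ and $C_\sigma = C_{\sigma^{-1}}$. The term $\#\{i;\, \sigma(\sigma(i)) < \sigma(i) < i\}$ vanishes, because $\sigma(\sigma(i)) = i$ forces the chain $i < \sigma(i) < i$, which is impossible. Hence for an involution the identity of Proposition \ref{INV=INVMT} collapses to
\[
 \cl(\sigma) = 2(A_\sigma + A_{\sigma^{-1}} + B_\sigma) + 2C_\sigma + \#\{i;\, i < \sigma(i)\}.
\]

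Next I would observe that for an involution, the map $i \mapsto \sigma(i)$ pairs up the non-fixed points into $2$-cycles, so exactly half of the non-fixed indices satisfy $i < \sigma(i)$. That gives $\#\{i;\, i<\sigma(i)\} = \tfrac{1}{2}\#\{i;\, \sigma(i)\neq i\}$. Substituting this into the display above yields
\[
 \cl(\sigma) - 2(A_\sigma + A_{\sigma^{-1}} + B_\sigma) = 2C_\sigma + \tfrac{1}{2}\#\{i;\, \sigma(i)\neq i\}.
\]
Comparing with (\ref{sgc}), we see that the slack in the inequality is exactly $2C_\sigma$, so equality holds if and only if $C_\sigma = 0$.

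There is no real obstacle here; the lemma is essentially a bookkeeping exercise once Proposition \ref{INV=INVMT} is available. The only points requiring care are the vanishing of the term $\#\{i;\, \sigma(\sigma(i)) < \sigma(i) < i\}$ (which is immediate from $\sigma^2 = \id$) and the pairing argument that rewrites $\#\{i;\, i<\sigma(i)\}$ in terms of $\#\{i;\, \sigma(i)\neq i\}$.
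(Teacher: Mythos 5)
Your proof is correct and follows essentially the same route as the paper: both arguments specialize Proposition \ref{INV=INVMT} to an involution, note that the term $\#\{i;\,\sigma^2(i)<\sigma(i)<i\}$ vanishes and $C_{\sigma^{-1}}=C_\sigma$, and use the pairing of non-fixed points to identify $\#\{i;\,i<\sigma(i)\}$ with $\tfrac{1}{2}\#\{i;\,\sigma(i)\neq i\}$, so that the slack in (\ref{sgc}) is exactly $2C_\sigma$. No issues.
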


 \begin{proof}
  The proof is straightforward:
  \begin{eqnarray*}
   & & \cl (\sigma ) - 2(A_\sigma + A_{\sigma^{-1}} + B_\sigma ) - \frac{1}{2}\cdot\#\{i;\, \sigma (i) \not=i\} \\ &=& \cl (\sigma ) - 2(A_\sigma + A_{\sigma^{-1}} + B_\sigma ) - \#\{i;\, i<\sigma (i)\} \\
   &=& C_\sigma + C_{\sigma^{-1}} + \#\{ i; \, \sigma^2 (i) < \sigma (i) <i\}
  \end{eqnarray*}
  Since $\sigma$ is an involution the last summand equals zero and $C_{\sigma^{-1}} = C_\sigma$. Hence the right hand side is $2 \cdot C_\sigma$, which proves the assertion.
 \end{proof}

 \begin{lemma} \label{sgc_odd}
  If $\sigma \in S_g$ is the disjoint product of some transpositions and a cycle of order 3, we get
  \[
   \cl (\sigma) - 2(A_\sigma + A_{\sigma^{-1}} + B_\sigma) = \frac{1}{2}\cdot\#\{i;\, \sigma (i) \not= i\} + \frac{1}{2}
  \]
  if and only if $C_\sigma = C_{\sigma^{-1}} = 0$.
 \end{lemma}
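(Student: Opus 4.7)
The strategy parallels the proof of Lemma \ref{sgc_even}: apply Proposition \ref{INV=INVMT} to isolate $C_\sigma + C_{\sigma^{-1}}$, which, being a sum of two nonnegative integers, vanishes iff both summands do. Concretely, rewriting Proposition \ref{INV=INVMT} as
\[
 \cl(\sigma) - 2(A_\sigma + A_{\sigma^{-1}} + B_\sigma) = C_\sigma + C_{\sigma^{-1}} + \#\{i;\, i<\sigma(i)\} + \#\{i;\, \sigma^2(i)<\sigma(i)<i\},
\]
the lemma will follow once I prove the identity
\[
 \#\{i;\, i<\sigma(i)\} + \#\{i;\, \sigma^2(i)<\sigma(i)<i\} = \tfrac{1}{2}\#\{i;\, \sigma(i)\neq i\} + \tfrac{1}{2}
\]
for every $\sigma$ of the prescribed form.

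To prove this identity I would decompose the contributions according to the cycle structure of $\sigma$. A transposition $(a\enspace b)$ with $a<b$ contributes exactly $1$ to $\#\{i;\, i<\sigma(i)\}$ (from $a$) and $0$ to $\#\{i;\, \sigma^2(i)<\sigma(i)<i\}$ (since $\sigma^2$ fixes $a$ and $b$, the required chain $i<\sigma(i)<i$ is impossible); fixed points contribute $0$ to both. Hence the transpositions contribute $t = \tfrac{1}{2}\cdot 2t$ to the left-hand side, matching the half-support coming from the $2t$ non-fixed points in the transpositions.

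It then remains to check that the unique $3$-cycle contributes exactly $2 = \tfrac{1}{2}\cdot 3 + \tfrac{1}{2}$ to the sum, independently of the positions of its entries. Writing the cycle as $\sigma(a)=b,\ \sigma(b)=c,\ \sigma(c)=a$, the sum in question equals
\[
 [a<b]+[b<c]+[c<a]+[c<b<a]+[a<c<b]+[b<a<c],
\]
which I would evaluate by running through the six possible total orderings of $\{a,b,c\}$; each ordering gives $2$. A more conceptual argument uses that the three cyclic ascent indicators $[a<b],[b<c],[c<a]$ must sum to either $1$ or $2$ (a strict cyclic chain is impossible), and that the three descent-chain indicators are mutually exclusive and contribute $1$ exactly when the ascent count is $1$ (because then the complement has a monotone descending run of length $3$) and $0$ when the ascent count is $2$. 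Either way the contributions total $2$.

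Combining both paragraphs, the displayed identity holds; substituting back into Proposition \ref{INV=INVMT} leaves precisely $C_\sigma + C_{\sigma^{-1}}$ as the discrepancy, and the lemma follows. The only step requiring real (but minor) work is the $3$-cycle contribution; once that is in hand the rest is bookkeeping identical in spirit to the involution case.
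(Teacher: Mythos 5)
Your proof is correct and follows essentially the same route as the paper: both arguments rest on Proposition \ref{INV=INVMT} and reduce to the observation that for a $3$-cycle exactly one of the monotone-chain counts $\#\{i;\, i<\sigma(i)<\sigma^2(i)\}$, $\#\{i;\, \sigma^2(i)<\sigma(i)<i\}$ equals one. The only (cosmetic) difference is that the paper symmetrizes by adding the identities for $\sigma$ and $\sigma^{-1}$, whereas you apply the identity once and verify the residual term $\#\{i;\, i<\sigma(i)\}+\#\{i;\,\sigma^2(i)<\sigma(i)<i\}=\tfrac12\#\{i;\,\sigma(i)\neq i\}+\tfrac12$ cycle by cycle, checking the six orderings of the $3$-cycle directly.
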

 \begin{proof}
  One can prove this lemma the same way as Lemma \ref{sgc_even}, with only a slight difference: Depending on whether the cycle of order $3$ is increasing or decreasing, one of the terms $\#\{i;\, i<\sigma (i)<\sigma^2 (i)\}, \#\{i;\, \sigma^2 (i) < \sigma (i) < i\}$ equals zero and the other equals one. Thus if we add the two equations
  \begin{eqnarray*}
   \cl (\sigma ) - 2(A_\sigma + A_{\sigma^{-1}} + B_\sigma ) - \frac{1}{2}\cdot\#\{i;\, i < \sigma (i)\} &=& C_\sigma + C_{\sigma^{-1}} + \#\{ i; \, \sigma^2 (i) < \sigma (i) <i\} \\
   \cl (\sigma^{-1} ) - 2(A_\sigma + A_{\sigma^{-1}} + B_\sigma ) - \frac{1}{2}\cdot\#\{i;\, \sigma (i) < i\} &=& C_\sigma + C_{\sigma^{-1}} + \#\{ i; \, i < \sigma (i) < \sigma^2 (i)\}
  \end{eqnarray*}
  we get
  \[
   2 \cl (\sigma) - 4(A_\sigma + A_{\sigma^{-1}} + B_\sigma ) - \#\{i;\, \sigma (i) \not=i\} =  2C_\sigma + 2C_{\sigma^{-1}} + 1
  \]
  and hence the desired result.
 \end{proof}

 Note that since the left hand side of (\ref{sgc}) is an integer and the fraction part of the right hand side is $\frac{1}{2}$, they always differ by at least $\frac{1}{2}$.
 
 \begin{remark}
  One can show that the converse of Lemma \ref{sgc_even} and \ref{sgc_odd} is also true. If the left hand side and right hand side of (\ref{sgc}) are equal resp.\ differ by $\frac{1}{2}$ then $C_\sigma = C_{\sigma^{-1}} = 0$ and $\sigma$ is an involution resp.\ the disjoint product of some transpositions and a cycle of order $3$. Indeed,  for any cycle $\sigma_0$ of order greater than two for which the sequence $(\sign (\sigma_0^{i+1}(1) - \sigma_0^i(1)))_i$ is alternating we get $C_{\sigma_0} \not= 0$ or $C_{\sigma_0^{-1}} \not= 0$ because otherwise the sequence $(|\sigma_0^{i+1}(1) - \sigma_0^i(1)|)_i$ would either increase or decrease, contradicting the fact that it is cyclic. This proves the converse of Lemma \ref{sgc_even}. The argument remains true under the weaker assumption that $(\sign (\sigma_0^{i+1}(1) - \sigma_0^i(1)))_{i=1}^{\ord\sigma_0}$ is alternating after removing some consecutive elements of the same value (Here both terms ``consecutive'' and ``alternating'' refer to the notion that the $\ord\sigma_0$-th element of the sequence has the first element as its successor), which implies the converse of Lemma \ref{sgc_odd}.
  \end{remark}
 
 \begin{definition}
  Let $\sigma \in S_{2g}$ be an involution and $e \in \{1,\ldots, 2g\}$. We say that \emph{$\sigma$ embraces $e$} if there exists an $i$ such that $i < e < \sigma(i)$.
 \end{definition}
 
 Combining our previous calculations from section \ref{sect2} with the combinatorial lemmas we get a classification of the top-dimensional KR-strata.

 \begin{proposition} \label{topdim}
  Let $0\leq d \leq g$, $x = t^{x_0} (u\sigma) \in \Adm^{(d)}$. \smallskip \\
  \emph{(1)} If $g-d$ is even, $x$ is of maximal length if and only if it is possibly maximal, $\sigma$ is a product of $\frac{g-d}{2}$ disjoint transpositions and $C_\sigma = 0$. \smallskip \\
  \emph{(2)} If $g-d$ is odd, $x$ is of maximal length if and only if it is possibly maximal and one of the following conditions on $\sigma$ holds true.
   \begin{enumerate}
    \item[(a)]  $\sigma$ is the product of $\lfloor \frac{g-d}{2} \rfloor$ disjoint transpositions such that $C_\sigma = 0$ and $\sigma$ does not embrace its (unique) fixed point $e$ which is not contained in $\F(x)$ (i.e.\ $u(e) = 1$).
    \item[(b)] $\sigma$ is the product of $\lfloor \frac{g-d}{2} \rfloor - 1$ transpositions and a cycle of order three which are pairwise disjoint such that $C_\sigma = C_{\sigma^{-1}} = 0$.
   \end{enumerate}
 \end{proposition}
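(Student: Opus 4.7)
The plan is to reduce the statement to a direct computation via the length formula of Proposition \ref{main}(1), combined with the exact (not merely inequality) versions of Lemmas \ref{sgc_even} and \ref{sgc_odd}. First I would dispose of the claim that a maximal length $x$ is automatically possibly maximal: by Corollary \ref{maxelt}, every $\mu$-admissible element is dominated by a possibly maximal element with the same $S_g$-component and hence the same $p$-rank; if $x$ were not possibly maximal, that dominating element would have strictly larger length in the same stratum, contradicting maximality. Combining this with Lemma \ref{top-dim1}, I may assume $\sigma$ already has one of the three prescribed shapes (involution, involution of rank $g-d-1$ plus an isolated fixed point of $w$, or involution plus a $3$-cycle), and only need to determine which such $x$ actually achieve the bound $\lfloor (g^2+d)/2\rfloor$.

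The next step is to compute $\#\{(i,f)\in\{1,\ldots,g\}\times F : i<f<\sigma(i)\}$ in terms of $B_\sigma$. In case (1) and case (2b), $F$ coincides with the fixed point set of $\sigma$, and so this count equals $B_\sigma$ exactly. In case (2a) the fixed point set of $\sigma$ is $F\cup\{e\}$, where $e$ is the unique fixed point of $\sigma$ with $u(e)=1$; hence
\[
 \#\{(i,f)\in\{1,\ldots,g\}\times F : i<f<\sigma(i)\} = B_\sigma - E_\sigma(e),
\]
where $E_\sigma(e):=\#\{i : i<e<\sigma(i)\}$ is precisely the number of transposition pairs of $\sigma$ that embrace $e$.

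Substituting into Proposition \ref{main}(1) and applying the exact identities established in the proofs of Lemmas \ref{sgc_even} and \ref{sgc_odd}, namely
\[
 \cl(\sigma)-2(A_\sigma+A_{\sigma^{-1}}+B_\sigma)=\tfrac{1}{2}\#\{i:\sigma(i)\neq i\}+2C_\sigma
\]
in the involution case and
\[
 \cl(\sigma)-2(A_\sigma+A_{\sigma^{-1}}+B_\sigma)=\tfrac{1}{2}\#\{i:\sigma(i)\neq i\}+\tfrac{1}{2}+(C_\sigma+C_{\sigma^{-1}})
\]
in the transposition-plus-$3$-cycle case, one arrives after a short simplification at
\[
 \cl(x)=\left\lfloor\tfrac{g^2+d}{2}\right\rfloor - \Delta,
\]
where $\Delta$ equals $2C_\sigma$ in case (1), $2(C_\sigma+E_\sigma(e))$ in case (2a), and $C_\sigma+C_{\sigma^{-1}}$ in case (2b). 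Since each summand appearing in $\Delta$ is a non-negative integer, the maximum $\lfloor(g^2+d)/2\rfloor$ is attained if and only if the corresponding summands all vanish, which is exactly the system of conditions in the statement.

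The only real obstacle is the bookkeeping in case (2a): separating the contribution of the extra fixed point $e$ from that of $F$ requires recognising that the difference $B_\sigma - \#\{(i,f) : i<f<\sigma(i)\}$ is exactly the embrace count $E_\sigma(e)$. Once this combinatorial identification is made, the rest is a mechanical combination of the already established length formula and the refined forms of the two symmetric-group combinatorics lemmas.
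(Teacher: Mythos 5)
Your proposal is correct and follows essentially the same route as the paper: reduce to the three shapes of Lemma \ref{top-dim1}, then evaluate the length via Proposition \ref{main}(1) together with the exact identities underlying Lemmas \ref{sgc_even} and \ref{sgc_odd}. Your explicit deficit $\Delta$ (including the identification of $B_\sigma - \#\{(i,f):\, i<f<\sigma(i)\}$ with the embrace count of $e$ in case (2a)) is just a cleaner packaging of the paper's argument that all inequalities used in the proof of Proposition \ref{main}(2) must be equalities, up to the single allowed slack of $\frac{1}{2}$ when $g-d$ is odd.
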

 \begin{proof}
  An element $x\in\Adm^{(d)}$ is of maximal length if and only if it is possibly maximal and all inequalities we used to prove Proposition \ref{main}\teil{2} are in fact equalities for $x$, with an exception in the case where $g-d$ is odd. Here we allow that exactly one of these equalities does not hold true, but its left hand side and right hand side differ only by $\frac{1}{2}$ (since we actually estimated $\cl(x)$ against $\frac{g^2+d}{2} = \dim \cA_I^{(d)} + \frac{1}{2}$). Now the mentioned equalities are
  \begin{eqnarray}
   \label{ineq1} \# \{ (i,f) \in \{1, \ldots , g\} \times \F(x);\, i < f < \sigma (i) \} &=& B_\sigma \\
   \label{ineq2} \frac{1}{2}\cdot \#\{i \in \{1, \ldots ,g\};\, \sigma(i) = i \} &=& \frac{d}{2} \\
   \label{ineq3} \cl(\sigma) - 2(A_\sigma + A_{\sigma^{-1}} + B_\sigma) &=& \frac{1}{2} \#\{i;\, \sigma (i) \not= i\}.
  \end{eqnarray}
  Recall that we limited the possible values of an $S_g$-component of an element of maximal length in Lemma \ref{top-dim1}. Thus it suffices to check the assertion in the following cases.
  
  \emph{(1)} Let $g-d$ be even and $\sigma$ the product of $\frac{g-d}{2}$ transpositions. Since the fixed points of $\sigma$ are exactly the elements of $\F(x)$ the first two equations are fulfilled. By Lemma \ref{sgc_even} the third equality is equivalent to $C_\sigma = 0$.
  
  \emph{(2)} Let $g-d$ be odd. We have to consider two cases:
  
  (a) \emph{$\sigma$ is the product of $\lfloor \frac{g-d}{2} \rfloor$ disjoint transpositions.} In this case, we get
  \[
   \frac{1}{2}\cdot \#\{i \in \{1, \ldots ,g\};\, \sigma(i) = i \} = \frac{d}{2} + \frac{1}{2}
  \]
  Hence (\ref{ineq1}) and (\ref{ineq3}) must hold true. Subtracting the left hand side from the right hand side, we see that (\ref{ineq1}) is equivalent to
  \[
   \{i;\, i<e<\sigma (i)\} = 0
  \]
  i.e. $\sigma$ does not embrace $e$. We have already shown that (\ref{ineq3}) is equivalent to $C_\sigma = 0$
  
  (b) \emph{$\sigma$ is the product of $\lfloor \frac{g-d}{2} \rfloor - 1$ transpositions and a cycle of length 3 which are pairwise disjoint:} Since the left hand side of (\ref{ineq3}) is an integer but the right hand side has fraction part $\frac{1}{2}$, they differ by at least $\frac{1}{2}$. Thus $x$ is of maximal length if and only if $C_\sigma = C_{\sigma^{-1}} = 0$ (use Lemma \ref{sgc_odd}) and and $x$ satisfies the equalities (\ref{ineq1}) and (\ref{ineq3}). But those equalities follow from the fact that the fixed points of $\sigma$ are exactly the elements of $\F(x)$.
 \end{proof}
 
 We denote by $M^{(d)} \subset \Adm^{(d)}$ the subset of elements of maximal length.
 
 As an application of this result we can deduce an explicit formula for the number of top-dimensional irreducible components of $\cA_I^{(d)}$ for $d > 0$. The remaining part of this section will not be needed in the sequel.

 We recall the following result of G\"ortz and Yu.
 
 \begin{proposition}[\cite{GY2}, Thm. 7.4]
  If  $\cA_x$ is not contained in the supersingular locus, then the stratum $\cA_x$ is irreducible.
 \end{proposition}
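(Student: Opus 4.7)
The plan is to combine the local model diagram of subsection \ref{ssectlm} with the known irreducibility of non-supersingular $p$-rank strata on $\cA_g$. Since $\varphi \colon \widetilde{\cA_I} \to \cA_I$ is a smooth surjective $\underline{\Aut}(\Lambda_\bullet)$-torsor, hence has geometrically connected fibers, and $\psi \colon \widetilde{\cA_I} \to \loc$ is smooth, irreducibility of $\cA_x$ is equivalent to irreducibility of $\psi^{-1}(\Fl_x)$. The Schubert cell $\Fl_x$ is itself irreducible (it is isomorphic to an affine space of dimension $\cl(x)$ by the Bruhat decomposition), so the task reduces to showing that the smooth map $\psi$ restricted to $\psi^{-1}(\Fl_x)$ has geometrically connected fibers over the generic point of $\Fl_x$.

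Next I would consider the projection $\pi \colon \cA_I \to \cA_g$. By Corollary \ref{relationship} the image $\pi(\cA_x)$ lies in a single $p$-rank stratum $\cA_g^{(d)}$, where $d$ is determined by the $W$-component of $x$. Under the hypothesis that $\cA_x$ is not supersingular, the open dense subset of $\cA_x$ obtained by removing the supersingular locus maps to the non-supersingular part of $\cA_g^{(d)}$, which is known to be irreducible by Theorem \ref{koblitz} together with classical results of Oort (for $d>0$ the full $p$-rank stratum $\cA_g^{(d)}$ is already non-supersingular and irreducible; for $d=0$ one removes the supersingular locus). It therefore suffices to show that the fibers of $\pi$ restricted to $\cA_x$, taken over a non-supersingular point of $\cA_g$, are irreducible.

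To handle these fibers I would translate the problem via Dieudonné theory into one about chains of lattices: over a non-supersingular point $(A_0,\lambda_0,\eta) \in \cA_g$, the fiber $\pi^{-1}(A_0,\lambda_0,\eta) \cap \cA_x$ parametrizes self-dual chains $M_0 \supset M_1 \supset \cdots \supset M_{2g}$ inside the Dieudonné module of $A_0[p]$ that are compatible with Frobenius and Verschiebung and have relative position prescribed by the extended alcove $\bx$. Because $A_0$ is not supersingular, the associated isocrystal splits into distinct slope pieces, so the Hodge--Newton style slope filtration rigidifies the set of admissible chains. I would then produce an explicit irreducible parameter space (say a quotient of a product of affine-Grassmannian-type cells corresponding to each slope piece) surjecting onto this fiber.

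The main obstacle is precisely this last step. On the supersingular locus, the isocrystal of $A_0$ has a single slope $1/2$ and admits a large group of quasi-automorphisms, which forces the fiber of $\pi$ over a supersingular point to decompose into several irreducible components of the same dimension; this is exactly why the hypothesis is necessary. The delicate combinatorial input is to verify that the $\mu$-admissibility condition on $x$, together with the existence of a nontrivial slope filtration on the isocrystal of $A_0$, cuts out a \emph{single} component in the fiber. Once this is established, irreducibility of $\cA_x$ follows by combining the irreducibility of $\Fl_x$, of the non-supersingular part of $\cA_g^{(d)}$, and of the generic fiber of $\pi$ restricted to $\cA_x$.
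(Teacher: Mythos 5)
First, a remark on the comparison itself: the paper does not prove this statement --- it is quoted verbatim from \cite{GY2}, Thm.\ 7.4, and used as a black box --- so there is no internal proof to measure your attempt against; I can only assess the proposal on its own terms. On those terms it has a genuine gap, in fact two. The first is the opening reduction. The local model diagram only identifies $\cA_I$ and $\loc$ \'etale locally, so it transfers \emph{local} properties (smoothness, normality, dimension of strata); it cannot decide a global property like irreducibility. Your reduction of the irreducibility of $\cA_x$ to that of $\psi^{-1}(\Fl_x)$ via the torsor $\varphi$ is fine (the group $\underline{\Aut}(\Lambda_\bullet)$ is connected), but the next step --- deducing irreducibility of $\psi^{-1}(\Fl_x)$ from that of the Schubert cell $\Fl_x$ --- requires exactly the geometric connectedness of the fibres of $\psi$ over $\Fl_x$ that you postpone; those fibres are not controlled by anything in the paper and are essentially as inaccessible as $\cA_x$ itself, so this step is circular. (Note also that a smooth surjection onto an irreducible base can perfectly well have disconnected total space.)

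The second gap is the fibration argument over $\cA_g$. Even granting that the non-supersingular part of $\cA_g^{(d)}$ is irreducible --- which does not follow from Theorem \ref{koblitz} as stated and would need an external input such as Chai--Oort --- irreducibility of the base together with irreducibility of the fibres of $\pi|_{\cA_x}$ does not yield irreducibility of $\cA_x$ unless one also knows, e.g., that $\pi|_{\cA_x}$ is open, or surjective with fibres of constant dimension; none of this is addressed. More importantly, the irreducibility of the fibre $\pi^{-1}(A_0)\cap\cA_x$ over a non-supersingular point, which you yourself flag as ``the main obstacle,'' is the entire content of the theorem: the slope-filtration heuristic (``the Hodge--Newton style slope filtration rigidifies the set of admissible chains'') is a plausible guiding picture but not an argument, and for a $p$-rank $d$ point with $0<d<g$ the local part of $A_0[p]$ can still be supersingular-like on a sub-isocrystal, so the fibre can a priori still break up. As written, the proposal is a plan whose two load-bearing steps are each unproven, and the intended conclusion does not follow from the parts that are established.
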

 
 Now let $d>0$. Then the proposition above says that $\cA_x$ is connected for each $x \in \Adm^{(d)}(\mu)$. Thus,
 \[
  \# M^{(d)} = \# \{ \textnormal{ top-dimensional irreducible components of } \cA_I^{(d)} \}.
 \]
 
 \begin{proposition}
  Let $0 \leq d \leq g$. We denote by $C_n := \frac{1}{n+1} \cdot { 2n \choose n }$ the n-th Catalan number. \smallskip \\
  \emph{(1)} If $g-d$ is even,
   \[
    \# M^{(d)} = 2^d \cdot {g \choose d} \cdot C_{\frac{g-d}{2}}.
   \]
  \emph{(2)} If $g-d$ is odd,
   \[
    \# M^{(d)} = 2^d \cdot {g \choose d} \cdot \frac{g-d+1}{2} \cdot C_{\frac{g-d+1}{2}}.
   \]
 \end{proposition}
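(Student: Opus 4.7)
The plan is to combine Proposition \ref{topdim} with a bijective enumeration. Every $x \in M^{(d)}$ is determined by a triple $(\F(x), v, \sigma)$, where $\F(x)$ is the set of fixed points of the $W$-component of $x$ (contributing $\binom{g}{d}$ choices), $v \in \{0,1\}^d$ is the parameter governing the $X_*(T)$-coordinates at positions in $\F(x)$ (contributing $2^d$ choices), and $\sigma \in S_g$ is the $S_g$-component, which must fix $\F(x)$ and satisfy the combinatorial conditions of Proposition \ref{topdim}. Since those conditions and $\mu$-admissibility only depend on the relative order of the elements of $\{1,\ldots,g\}\setminus \F(x)$, the number of admissible $\sigma$ depends only on $n := g-d$; writing $N(n)$ for this count, we obtain $\# M^{(d)} = \binom{g}{d}\cdot 2^d \cdot N(n)$ and it remains to compute $N(n)$.

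For $n=2m$ even, the proposition forces $\sigma$ to be an involution with $m$ disjoint transpositions and $C_\sigma = 0$. Inspecting the definition of $C_\sigma$ shows that for an involution $C_\sigma=0$ is equivalent to the matching being non-crossing, hence $N(2m)=C_m$ by the classical Catalan count. For $n=2m-1$ odd, decompose $N(2m-1)=N_a+N_b$ along the cases (2)(a) and (2)(b) of the proposition, with (b) counted with both orientations of the 3-cycle. I plan to establish $N(2m-1)=mC_m$ via a single bijection with pointed non-crossing matchings on $\{1,\ldots,2m\}$. Given such a pair $(M,P)$: if $2m\in P$, write $P=\{e,2m\}$ and remove $2m$ to obtain a non-crossing matching on $\{1,\ldots,2m-1\}$ with unmatched element $e$ (non-crossingness together with $2m$ being maximal forces the matching not to embrace $e$, placing us in case (a)); if $2m\notin P$, write $P=\{p,q\}$ with $p<q<2m$, let $\{r,2m\}$ be the unique pair containing $2m$, observe that non-crossingness of $M$ forces $r<p$ or $r>q$, and merge $P$ with $\{r,2m\}$ into the $3$-element block $\{p,q,r\}$ on $\{1,\ldots,2m-1\}$, with cyclic orientation on the $3$-cycle determined by the sign of $r-q$. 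The inverse maps are immediate from the constructions. Since each of the $C_m$ non-crossing matchings on $\{1,\ldots,2m\}$ contains exactly one pair with $2m$ and $m-1$ without, we obtain $N_a=C_m$ and $N_b=(m-1)C_m$, whence $N(2m-1)=mC_m=\frac{n+1}{2}\cdot C_{(n+1)/2}$ as required.

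The main obstacle is to verify that the bijection is well-defined, namely that the algebraic conditions of Proposition \ref{topdim}(2)(b) (i.e.\ $C_\sigma = C_{\sigma^{-1}} = 0$ for $\sigma$ with one $3$-cycle and $m-2$ transpositions) are equivalent to the cycle supports forming a non-crossing partition of $\{1,\ldots,2m-1\}$, and that in such non-crossing configurations both orientations of the $3$-cycle are simultaneously permissible. The ``only if'' direction is essentially the content of the remark following Lemma \ref{sgc_odd}. For the ``if'' direction, I plan a case analysis on the relative position of a transposition support $\{x,y\}$ and the $3$-cycle support $\{\alpha<\beta<\gamma\}$: non-crossingness means $\{x,y\}$ lies in one of the four open gaps of $\{\alpha,\beta,\gamma\}$, or else surrounds the triple with $x<\alpha$ and $y>\gamma$. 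A direct check in each of these configurations confirms that no contribution to either $C_\sigma$ or $C_{\sigma^{-1}}$ arises, independently of the orientation of the $3$-cycle, completing the verification.
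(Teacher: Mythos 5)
Your reduction to counting admissible $S_g$-components (yielding the factors $2^d$ and ${g \choose d}$), your treatment of the even case via non-crossing matchings, and your count $N_a=C_m$ for case (a) are all correct and essentially coincide with the paper's argument (the paper phrases the even case via Dyck words and case (a) via the bijection $K_n^1\to K_{n+1}$ matching the free point with a new maximal element, which is your ``distinguished pair contains $2m$'' case read backwards). The genuine gap is in your case-(b) bijection: merging the distinguished pair $P=\{p,q\}$ with the pair $\{r,2m\}$ does \emph{not} in general produce a non-crossing configuration on $\{1,\ldots,2m-1\}$, because the merged block $\{p,q,r\}$ spans an interval that neither $\{p,q\}$ nor $\{r,2m\}$ spanned, and can therefore cross a pair of $M$ that was nested around $\{p,q\}$ but disjoint from $\{r,2m\}$. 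Concretely, take $m=3$, $M=\bigl\{\{1,4\},\{2,3\},\{5,6\}\bigr\}$ (non-crossing) and $P=\{2,3\}$, so $r=5$; the output has $3$-block $\{2,3,5\}$ and transposition $(1\enspace 4)$, which cross. Correspondingly, for the orientation $2\to 3\to 5\to 2$ the pair $(i,j)=(1,3)$ satisfies $1<3<\sigma(1)=4<\sigma(3)=5$, and for the opposite orientation $(i,j)=(1,2)$ gives $1<2<4<5$; so $C_\sigma\geq 1$ for both orientations and neither lies in the target set. Dually, the valid element with $3$-block $\{2,3,4\}$ and transposition $(1\enspace 5)$ is not in the image. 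Hence your map is neither well-defined nor surjective, even though the total count $(m-1)C_m$ happens to be the correct answer.

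The equivalence you single out as ``the main obstacle'' (the conditions of Proposition \ref{topdim}(2)(b) being equivalent to the cycle supports forming a non-crossing partition, with both orientations of the $3$-cycle allowed) is actually fine; the missing piece is a correct enumeration of these oriented non-crossing partitions. The paper does this differently: for $\sigma\in K_n^3$ with $3$-cycle $\{c_1<c_2<c_3\}$, non-crossingness forces the open interval $(c_1,c_3)$ to be $\sigma$-stable, so $\sigma$ is determined by its restriction to the outside (an element of some $K_i$), its restriction to the inside (an element of $K_{n-1-i}^1$ with free point $c_2$), the position of $c_1$, and the orientation; summing $2(2i+1)C_iC_{n-i}$ and using the Catalan convolution gives $\#K_n^3=nC_{n+1}$. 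To repair your argument you would need a merging rule compatible with non-crossingness, which your current choice (cutting at the pair containing $2m$) is not.
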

 \begin{proof}
  The claim is certainly true if $g=d$ as $M^{(g)} = \Adm^{(g)} = \{t^{w(\mu)}; w \in W\}$, so we may assume that $g-d >0$. In order to calculate $\# M^{(d)}$, we have to count the number of $\sigma \in S_g$ of the form described in Proposition \ref{topdim}. As the factor ${g \choose d}$ counts the number of choices of the set of fixed points and $2^d$ is the number of choices for the $\FF_2$-component, we are reduced to the case $d=0$.

  We denote for any positive integer $m$
  \[
   K_m := \{ \sigma \in S_{2m};\, \sigma \textnormal{ is a fixed point free involution}, C_\sigma = 0\}.
  \]
  Mapping an element $\sigma \in K_m$ to the the string with length $2m$ with $i$-th letter $X$ if $\sigma(i) > i$ resp.\ $Y$ if $\sigma(i) < i$ induces a bijection between $K_m$ and the Dyck-words of length $2m$. Thus $\# K_m = C_m$, proving (1).
  
  Now let $g$ be odd. We denote by $K_m^1$ the set of all $\sigma \in S_{2m+1}$ such that $\sigma$ is the product of $m$ disjoint transpositions, does not embrace its fixpoint and satisfies $C_\sigma = 0$. Also, we denote by $K_m^3$ the set of all $\sigma \in S_{2m+1}$ such that $\sigma$ is the product of $m - 1$ transpositions and a cycle of order three which are pairwise disjoint such that $C_\sigma = C_{\sigma^{-1}} = 0$. We write $n := \lfloor \frac{g}{2} \rfloor$. Then by Proposition \ref{topdim},
  \[
   \# M^{(0)} = \# K_n^1 + \# K_n^3.
  \]
  Now we have a bijection
  \begin{eqnarray*}
   K_n^1 & \rightarrow & K_{n+1} \\
   \sigma &\mapsto& \sigma',
  \end{eqnarray*}
  where, denoting the fixed point of $\sigma$ by $e$, the permutation $\sigma'$ is given by
  \[
   \sigma'(i) = \left\{ \begin{array}{ll} \sigma(i) & \textnormal{if } i \not= e,g+1 \\
                                          g+1 & \textnormal{if } i = e \\
                                          e & \textnormal{if } i =g+1
                        \end{array}\right. .
  \]
  In particular, $\# K_n^1 = C_{n+1}$. Now we have a bijection
  \begin{eqnarray*}
   K_n^3 & \rightarrow & \bigcup_{i=0}^{n-1} \{-1, 1\} \times K_i \times K_{n-1-i}^1 \times \{1,\ldots, 2i+1\} \\
   \sigma &\mapsto& (s_\sigma, \sigma_1, \sigma_2, c_1),
  \end{eqnarray*}
  where $c_1 < c_2 < c_3$ are the elements of the cycle of order $3$ of $\sigma$ and
  \begin{eqnarray*}
   s_\sigma &=& \left\{ \begin{array}{ll} -1 & \textnormal{if } \sigma(c_1) = c_3 \\ 1 & \textnormal{if } \sigma(c_1) = c_2 \end{array} \right. \\
   \sigma_1 &=& \sigma_{| [1,c_1) \cup (c_3, g]} \\
   \sigma_2 &=& \sigma_{| (c_1,c_3)}.
  \end{eqnarray*}
  As $c_1$ tells us where to reinsert $\sigma_2$ into $\sigma_1$, this map is indeed bijetive. Thus
  \begin{eqnarray*}
   \# K_3^n &=& \sum_{i=0}^{n-1} 2 \cdot (2i+1) \cdot C_i\cdot C_{n-i} \\
   &=& \sum_{i=0}^{n} (2n+2)\cdot C_i\cdot C_{n-i} - (4n+2)\cdot C_n \cdot C_1 \\
   &=& (2n+2) \cdot C_{n+1} - (4n+2) \cdot C_n \\
   &=& (2n+2) \cdot C_{n+1} - (n+2) \cdot C_{n+1} \\
   &=& n\cdot C_{n+1}
  \end{eqnarray*} 
  This finishes the proof of part (2).
  
 \end{proof}
 
 \begin{corollary}
  Let $d > 0$. Then the number of top-dimensional irreducible components of $\cA_I^{(d)}$ equals
  \[
    2^d \cdot {g \choose d} \cdot C_{\frac{g-d}{2}}
  \] 
  if $g-d$ is even and
  \[
    2^d \cdot {g \choose d} \cdot \frac{g-d+1}{2} \cdot C_{\frac{g-d+1}{2} }
  \]
  if $g-d$ is odd.
 \end{corollary}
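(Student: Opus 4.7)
The plan is to reduce the corollary directly to the preceding proposition together with the irreducibility result of G\"ortz and Yu. Since each top-dimensional irreducible component of $\cA_I^{(d)}$ is contained in the closure of precisely one top-dimensional KR stratum, it suffices to count the top-dimensional KR strata of $\cA_I^{(d)}$, each counted with its number of irreducible components. These strata are indexed by $M^{(d)}$, so the task will be to show that for $d>0$ each $\cA_x$ with $x \in M^{(d)}$ contributes exactly one irreducible component.

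First I would observe that an abelian variety of positive $p$-rank is never supersingular, so $\cA_I^{(d)}$ and in particular every KR stratum $\cA_x$ with $x \in \Adm^{(d)}$ for $d>0$ lies outside the supersingular locus. By the cited theorem of G\"ortz and Yu (\cite{GY2}, Thm.~7.4), each such $\cA_x$ is therefore irreducible. Hence the number of top-dimensional irreducible components of $\cA_I^{(d)}$ equals $\# M^{(d)}$.

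Next I would simply quote the formula for $\# M^{(d)}$ from the proposition just proved. In the case $g-d$ even, this gives $2^d \cdot \binom{g}{d}\cdot C_{(g-d)/2}$, matching the claim. In the case $g-d$ odd, the proposition yields $2^d\cdot\binom{g}{d}\cdot\frac{g-d+1}{2}\cdot C_{(g-d+1)/2}$, again matching the statement. No further work is required: the corollary is a direct translation of the counting result once the irreducibility input is in place.

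The only step that requires genuine care is the application of the G\"ortz--Yu theorem, specifically checking that the hypothesis ``not contained in the supersingular locus'' is met; but this is immediate from the fact that supersingular abelian varieties have $p$-rank $0$, whereas we are assuming $d > 0$. Consequently this proof is essentially bookkeeping, and I do not expect any substantive obstacle.
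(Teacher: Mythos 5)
Your proposal is correct and follows essentially the same route as the paper: invoke the G\"ortz--Yu irreducibility theorem (valid here since $d>0$ forces the strata outside the supersingular locus) to identify the number of top-dimensional irreducible components with $\# M^{(d)}$, and then quote the counting formula from the preceding proposition. No gaps.
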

 
 \begin{remark}
  One can also give a decription of the number of top-dimensional irreducible components of $\cA_I^{(0)}$. If $g$ is odd then according to Proposition 8.9 in \cite{GY2} there is no supersingular KR stratum among the top-dimensional ones, so that the above formula still holds for $d=0$. If $g$ is even, this proposition tells us that there is exactly one supersingular top-dimensional KR stratum. A formula for its number of connected components is given in \cite{GY1}, Corollary 6.6.
 \end{remark}

 \subsection{The relative position of $p$-rank strata} \label{sectrelpos}

 The aim of this section is to describe the closure of a $p$-rank stratum as a union of KR-strata. The main part of it will be to prove the following theorem:

 \begin{theorem} \label{prank}
  Let $0 \leq d \leq g$. \smallskip \\
  \emph{(1)} If $g-d$ is even,
  \[
   \overline{\cA_I^{(d)}} = \bigcup_{d'\leq d} \cA_I^{(d')}.
  \]
 \emph{(2)} If $g-d \not= 1$ is odd,
  \[
   \overline{\cA_I^{(d)}} = \bigcup_{d'\leq d} \cA_I^{(d')} \setminus \bigcup_{x \in M^{(d'')} \atop d''<d,\, 2 | g-d''} \cA_x.
  \]
 \emph{(3)} $\overline{\cA_I^{(g-1)}} = \bigcup\limits_{x=t^{x_0}(u\sigma) \in \Adm \atop u \not= 0} \cA_x$
  
 \end{theorem}
 Recall that we have shown that the closure of a $p$-rank stratum is again a union of KR strata at the beginning of section \ref{sectgoingup}. 
 
 We begin with the stepwise proof of the first two parts of the theorem. Note that we already proved in Lemma \ref{prank1} that the closure of a $p$-rank stratum and strata corresponding to a higher $p$-rank are disjoint. Now we determine the intersection of $\overline{\cA_I^{(d)}}$ and $\cA_I^{(d-1)}$, which is the main ingredient of the proof of part (1) and (2).

  \begin{proposition} \label{prank2} Let $1 \leq d \leq g$. \smallskip \\
   \emph{(1)} If $g-d$ is even then $\cA_I^{(d-1)} \subset \overline{\cA_I^{(d)}}$. \smallskip \\
   \emph{(2)} If $g-d$ is odd then
   \[
    \overline{\cA_I^{(d)}} \cap \cA_I^{(d-1)} = \cA_I^{(d-1)} \setminus \bigcup_{x \in M^{(d-1)}} \cA_x.
   \]
  \end{proposition}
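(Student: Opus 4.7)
The plan is to translate the statement into a combinatorial question on the admissible set. By Proposition \ref{KR}\teil{1} we have $\overline{\cA_I^{(d)}} = \bigcup_{y \in \Adm^{(d)}} \overline{\cA_y} = \coprod_{y \in \Adm^{(d)},\, z \leq y} \cA_z$, so intersecting with $\cA_I^{(d-1)}$ reduces both parts to asking, for which $x \in \Adm^{(d-1)}$ does there exist $y \in \Adm^{(d)}$ with $x \leq y$ in the Bruhat order.

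For part (1) my plan is to exploit parity. When $g-d$ is even, the sum of cycle orders of any $x \in \Adm^{(d-1)}$ equals $g-d+1$, which is odd, so the $S_g$-component $\sigma$ of $x$ cannot be a product of $(g-d+1)/2$ disjoint transpositions. Corollary \ref{cor going up} then yields $y \in \Adm^{(d)}$ with $x \leq y$ directly, proving $\cA_I^{(d-1)} \subset \overline{\cA_I^{(d)}}$.

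For part (2) I would prove the two inclusions separately. For ``$\subseteq$'' the key input is Theorem \ref{dim}: since $g-d$ is odd we have $\dim \cA_I^{(d-1)} = \dim \cA_I^{(d)} = \frac{g^2+d-1}{2}$. For $x \in M^{(d-1)}$ any $y \in \Adm^{(d)}$ with $x \leq y$ would satisfy $\cl(x) \leq \cl(y) \leq \dim \cA_I^{(d)} = \cl(x)$, hence $x = y$, contradicting that $x$ and $y$ have different $p$-rank; so $\cA_x$ must be disjoint from $\overline{\cA_I^{(d)}}$. For ``$\supseteq$'', I would show that every $x \in \Adm^{(d-1)} \setminus M^{(d-1)}$ is dominated by some $y \in \Adm^{(d)}$. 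If the $S_g$-component $\sigma$ of $x$ is not a product of $(g-d+1)/2$ disjoint transpositions, this again follows from Corollary \ref{cor going up}. Otherwise, by Proposition \ref{topdim}\teil{1}, the hypothesis $x \notin M^{(d-1)}$ forces either $x$ to be not possibly maximal or $C_\sigma \neq 0$.

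The main obstacle is the latter case, because all cycles of $x$ then have order exactly $2$, so Lemma \ref{going up}\teil{1} and \ref{going up}\teil{3} are unavailable. My plan is to construct $y$ explicitly. If $x$ is not possibly maximal, I choose $i$ with $u(i) = 1$ and $\sigma(i) = j \neq i$ and let $\sigma'$ be $\sigma$ with the transposition $(i\,j)$ removed, together with $u'(i) = 1$, $u'(j) = 0$, $u'(k) = u(k)$ elsewhere; the point is that $\F(y) = \F(x) \cup \{j\}$, so that $y \in \Adm^{(d)}$. If instead $x$ is possibly maximal but $C_\sigma \neq 0$, there exist $a < b < c < d$ with crossing transpositions $(a\,c),(b\,d) \in \sigma$, and I replace them by the $3$-cycle $(a\,c\,d)$ with $b$ becoming fixed, keeping the other transpositions and taking $u' = 0$; here $\F(y) = \F(x) \cup \{b\}$. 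In both constructions I would determine the unique $y_0$ making $y$ possibly maximal via Lemma \ref{mu-adm}, which automatically places $y$ in $\Adm$ by Theorem \ref{KR453}. The comparability $x \leq y$ I would then verify by factoring $y$ from $x$ through a short sequence of affine simple reflections in the spirit of the proof of Lemma \ref{going up} and applying Lemma \ref{wall}; alternatively, Corollary \ref{lastminute} reduces this to showing that $x$ and $y$ are merely related in Bruhat order, which simplifies the verification considerably.
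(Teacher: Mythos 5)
Your overall strategy coincides with the paper's: part (1) is exactly the paper's parity argument via Corollary \ref{cor going up}; the inclusion ``$\subseteq$'' in part (2) is the paper's dimension count (you phrase it through lengths via Corollary \ref{lengthdim}, which is equivalent); and your case split for ``$\supseteq$'' ($\sigma$ not an involution with $d-1$ fixed points, $x$ not possibly maximal, $C_\sigma \neq 0$) matches the paper's cases (a), (c), (b).

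The one genuine gap is the case where $x$ is possibly maximal and $C_\sigma \neq 0$. Your target $y$, obtained by replacing the crossing transpositions $(a\,c)(b\,d)$ by the $3$-cycle $(a\,c\,d)$, differs from $x$ by a product of \emph{two} reflections (the quotient of the $S_g$-components is a $3$-cycle), so neither of your proposed justifications of $x\leq y$ applies as stated: Lemma \ref{wall} compares elements differing by a single reflection, and Corollary \ref{lastminute} only upgrades ``related'' to ``$\leq$'', where relatedness is itself something one gets for free only from a single reflection. To close this you must exhibit and verify an intermediate element, and that is precisely the substantive content of the paper's proof: it first passes to $y'=s_{b,c}\cdot x$ (in your notation), whose $S_g$-component merges the two transpositions into a $4$-cycle, checks the criterion of Lemma \ref{mu-adm} at the four affected coordinates, proves $x\leq y'$ by the explicit computation $\langle\beta^1_{b,c},x_0\rangle=-1$ together with Lemma \ref{wall} --- Corollary \ref{lastminute} is unavailable at this step because $x$ and $y'$ have the \emph{same} $p$-rank --- and only then applies Corollary \ref{cor going up} to $y'$. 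A smaller point: in the ``not possibly maximal'' subcase your construction tacitly assumes that $u(k)=1$ with $\sigma(k)\neq k$ occurs at a single coordinate; you need the reduction via Lemma \ref{GY82} (as the paper performs) before your $y$ is a single reflection away from $x$ and Corollary \ref{lastminute} applies.
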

  
 \begin{proof}
  \emph{(1)} Let $g-d$ be even. We show that every KR-stratum with $p$-rank $d-1$ is contained in the closure of a KR-stratum of $p$-rank $d$.
   
  Let $x = t^{x_0} (u\sigma) \in \Adm^{(d-1)}$. Since $g-d+1$ is odd, $\sigma$ cannot be the product of $\frac{g-d+1}{2}$ disjoint transpositions. So by Corollary \ref{cor going up} there exists an $y \in \Adm^{(d)}$ such that $x \leq y$ and thus $\cA_x \subset \overline{\cA_y}$. \smallskip

   \emph{(2)} Let $g-d$ be odd. Then by Theorem \ref{dim}, we have \[\dim \cA_I^{(d-1)} = \dim \cA_I^{(d)} > \dim \left(\overline{\cA_I^{(d)}} \setminus \cA_I^{(d)}\right).\] Thus no top-dimensional KR stratum of $\cA^{(d-1)}$ can be contained in the closure of $\cA^{(d)}$. Since $\overline{\cA_I^{(d)}}$ is a union of KR strata, we conclude that it is disjoint to any top-dimensional stratum in $\cA_I^{(d-1)}$.

 So we are left to show that any non-top-dimensional KR stratum $\cA_x \subset \cA_I^{(d-1)}$ is contained in the closure of $\cA_I^{(d)}$. We prove this by constructing for every $x \in\Adm^{(d-1)}\setminus M^{(d-1)}$ a $y \in\Adm^{(d)}$ which dominates $x$ w.r.t.\ the Bruhat order. Now let $x = t^{x_0}(u\sigma) \in \Adm^{(d-1)}$ be not of maximal length. We distinguish between three different cases:
   
  (a) \emph{$\sigma$ is not an involution with $d-1$ fixed points.} Then $x$ is dominated by an element of $\Adm^{(d)}$ by Corollary \ref{cor going up}.
   
  (b) \emph{$\sigma$ is an involution with $d-1$ fixed points, $C_\sigma > 0$.} Recall that every element of $\Adm^{(d-1)}$ is dominated by a possibly maximal element of $\Adm^{(d-1)}$ which has the same $S_g$-component. So we can assume without loss of generality that $x$ is possibly maximal. Since $C_\sigma > 0$, we find $i,j$ such that $i<j<\sigma(i)<\sigma(j)$. We write $a_i = \sigma (i)$ and $a_j = \sigma(j)$. Let $y' = t^{y'_0} w' = t^{y'_0} (u'\sigma')$ be defined as $s_{j, a_i} \cdot x$. We have to check the $\mu$-admissibility criterion of Lemma \ref{mu-adm} at the coordinates $i,j,a_i$ and $a_j$.
   \begin{twoeqnarray}    
    y'_0(i) = x_0(i) &=& 0 & w'^{-1} (i) &=& a_i > i \\
    y'_0(j) = x_0(a_i) &=& 1 & w'^{-1} (j) &=& i < j \\
    y'_0 (a_i) = x_0 (j) &=& 0 & w'^{-1} (a_i) &=& a_j > a_i \\
    y'_0 (a_j) = x_0 (a_j) &=& 1  &\quad w'^{-1} (a_j) &=& j < a_j        
   \end{twoeqnarray}
   So $y'$ is $\mu$-admissible. Obviously $w'$ has $d-1$ fixed points and $\sigma'$ is not an involution. Hence $y'$ is dominated by some $y \in\Adm^{(d)}$ by Corollary \ref{cor going up}. We have $x\leq y'$ by Lemma \ref{wall} since
   \[
    \langle \beta^1_{j,a_i} , x_0 \rangle = x_0 (j) - x_0 (a_i)= -1,
   \]
   so $y$ also dominates $x$.
    
  (c) \emph{$\sigma$ is an involution with $d-1$ fixed points, $C_\sigma = 0$.} Since $x$ is not of maximal length, it is not possibly maximal. However, using Lemma \ref{GY82} we may (and will) assume there is only a single coordinate $1 \leq c_1 \leq g$ such that $u(c_1) = 1$ and $\sigma(c_1) \not= c_1$. Let $c_2 = \sigma (c_1)$. Then $\sigma$ interchanges $c_1$ and $c_2$ and $u(c_2)=0$. Now $y = s_{c_1, c_2}\cdot x$ is obviously contained in $\Adm^{(d)}$ and thus also $x \leq y$ by Corollary \ref{lastminute}.
  \end{proof}
 
 In order to deduce a comparison between arbitrary $p$-rank strata from this proposition we need the following lemma.

 \begin{lemma} \label{prank3}
  If $d \not= g-1$, the stratum $\cA_I^{(d-2)}$ is contained in $\overline{\cA_I^{(d)}}$.
 \end{lemma}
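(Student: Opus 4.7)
The plan is to reduce the lemma to a combinatorial statement in $\Adm$: for every $x \in \Adm^{(d-2)}$, exhibit a $y \in \Adm^{(d)}$ with $x \leq y$ in the Bruhat order. Once this is done, Proposition \ref{KR}\teil{1} yields $\cA_x \subset \overline{\cA_y} \subset \overline{\cA_I^{(d)}}$, and taking the union over $x$ gives the lemma. By the observation at the start of Section \ref{sectgoingup}, every $\mu$-admissible element is dominated by a possibly maximal element with the same cycles, so we may and do assume $x$ is possibly maximal.

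The only tool available is Lemma \ref{going up}, which, depending on a chosen cycle $Z \in \Z(x)$, produces a possibly maximal $y \geq x$ with a controlled effect on $\F$ and $\Z$: removing a $1$-cycle (move M1, $p$-rank $+1$), removing a $2$-cycle (M2, $p$-rank $+2$), or truncating a cycle of order $l \geq 3$ to one of order $l-1$ (M3, $p$-rank $+1$). Since $\sum_{Z \in \Z(x)} \ord Z = g-d+2$ and the hypothesis $d \neq g-1$ translates into $g-d+2 \neq 3$, the task becomes choosing a sequence of two moves whose $p$-rank increments sum to exactly $2$ without overshooting.

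Let $n_1$ denote the number of $1$-cycles in $\Z(x)$. If $\Z(x)$ contains a $2$-cycle, a single application of M2 suffices. Otherwise I split into four sub-cases. If $n_1 \geq 2$, apply M1 to two distinct $1$-cycles; by inspection of Lemma \ref{going up}\teil{1} the second $1$-cycle is untouched by the first move. If $n_1 = 1$, the bound $g-d+2 \geq 2$ forces a further cycle, and it must have order $\geq 3$ since we have excluded $2$-cycles, so M1 followed by M3 on disjoint cycles works. If $n_1 = 0$ and some cycle has order $\geq 4$, apply M3 twice to that cycle; by Lemma \ref{going up}\teil{3} the truncated cycle still lies in $\Z(y_1)$ with order $\geq 3$, so the second application is legitimate. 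Finally, if $n_1 = 0$ and every cycle has order exactly $3$, then $g-d+2$ is a positive multiple of $3$ distinct from $3$, so there are at least two $3$-cycles, and we apply M3 to each. In every case, transitivity of the Bruhat order yields the desired $y \in \Adm^{(d)}$.

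The hypothesis $d \neq g-1$ is precisely what rules out the main obstacle. If $g-d+2 = 3$ and $\Z(x)$ is a single $3$-cycle, the only options are M3 (gain $+1$) and M3 followed by M2 (gain $+3$), so no composition lands in $\Adm^{(d)}$ and the argument breaks down; this degenerate behaviour is why $\cA_I^{(g-1)}$ must be treated separately in Theorem \ref{prank}\teil{3}. Away from that case, the combinatorial inventory of $\Z(x)$ is always rich enough for the two-move strategy above.
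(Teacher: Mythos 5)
Your proof is correct and takes essentially the same route as the paper's: reduce to a possibly maximal element and compose the moves of Lemma \ref{going up} so that the $p$-rank rises by exactly two, with the hypothesis $d \neq g-1$ ruling out the single-$3$-cycle obstruction. The only difference is organizational --- the paper performs one move and then delegates the second step to Corollary \ref{cor going up}, checking that the intermediate element of $\Adm^{(d-1)}$ still has a cycle of order $\neq 2$ unless $g-d=1$, whereas you enumerate the two-move compositions explicitly; both come down to the same case analysis on $\Z(x)$.
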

 \begin{proof}
  Let $x = t^{x_0} (u\sigma) \in \Adm^{(d-2)}$. We assume without loss of generality that $x$ is possibly maximal. If there exists a $Z\in\Z(x)$ of order 2 we done by Lemma \ref{going up}\teil{2}.
  
  Assume none of the cycles has order two. Then we can apply Lemma \ref{going up}\teil{1} or \mbox{\ref{going up}\teil{3}} to it. We get an element $y' \in \Adm^{(d-1)}$ which dominates $x$. If we applied Lemma \mbox{\ref{going up}\teil{1}}, then $\Z(y')$ does not contain a cycle of order $2$, if we applied Lemma \ref{going up}\teil{3} there is at most one. So unless $g-d = 1$ we can apply Corollary \ref{cor going up} to $y'$ to obtain a $y \in \Adm^{(d)}$ such that $x\leq y'\leq y$.
 \end{proof}

 \begin{proof}[Proof of Theorem \ref{prank}\teil{1-2}]
  Successive application of Lemma \ref{prank3} yields that
  \begin{equation} \label{succprank2}
   \overline{\cA_I^{(d-2d')}} \subset \overline{\cA_I^{(d)}}
  \end{equation}
  for any $d' \leq \lfloor \frac{d}{2} \rfloor$. Now we combine this assertion with Proposition \ref{prank2}. \smallskip 
  
  \emph{(1)} Let $g-d$ be even. Then Proposition \ref{prank2} says that
  $
   \cA_I^{(d-2d'-1)} \subset \overline{\cA_I^{(d-2d')}}
  $
  for any $d' \leq \lfloor \frac{d-1}{2} \rfloor$. But then (\ref{succprank2}) implies that
  \[
   \cA_I^{(d')} \subset \overline{\cA_I^{(d)}}
  \]
  for any $d' \leq d$. Thus we have $\bigcup_{d'<d} \cA_I^{(d')} \subset \overline{\cA_I^{(d)}}$. Since the converse inclusion was proven in Lemma \ref{prank1} this gives the desired result. \smallskip
  
  \emph{(2)} Let $g-d$ be odd. Then Proposition \ref{prank2} states that
  \[
   \cA_I^{(d-2d'-1)} \setminus \bigcup_{x \in M^{(d-2d'-1)}} \cA_x \subset \overline{\cA_I^{(d-2d')}}
  \]
  for every $d' \leq \lfloor \frac{d-1}{2} \rfloor$. In combination with (\ref{succprank2}) this implies that
  \[
   \overline{\cA_I^{(d)}} \supset \bigcup_{d'\leq d} \cA_I^{(d')} \setminus \bigcup_{x \in M^{(d'')} \atop d''<d,\, 2 | g-d''} \cA_x.
  \]
  Since we know by Lemma \ref{prank1} that $\overline{\cA_I^{(d)}} \subset \bigcup_{d' \leq d} \cA_I^{(d')}$, the claim is reduced to the following lemma:

  \begin{lemma} \label{prank4}
  Let $g-d$ even and $u < g-d$ be an odd integer. Then none of the top-dimensional KR-strata $\cA_x \subset \cA_I^{(d)}$ are contained in the closure of $\cA_I^{(d+u)}$.
  \end{lemma}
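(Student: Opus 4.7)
My plan is to combine the dimension formula from Theorem \ref{dim} with a uniform bound on how much the $p$-rank can change along a single cover in the Bruhat order of $\widetilde{W}$. Suppose for contradiction that $\cA_x \subseteq \overline{\cA_I^{(d+u)}}$ for some $x \in M^{(d)}$; as observed at the start of Section \ref{sectrelpos}, this is equivalent to the existence of $y \in \Adm^{(d+u)}$ with $x \leq y$. Because $g-d$ is even and $u$ is odd, $g^2+d$ is even while $g^2+d+u$ is odd, so Theorem \ref{dim} gives
\[
 \cl(x) = \dim\cA_I^{(d)} = \frac{g^2+d}{2}, \qquad \cl(y) \leq \dim\cA_I^{(d+u)} = \frac{g^2+d+u-1}{2},
\]
and hence $\cl(y) - \cl(x) \leq \frac{u-1}{2}$.

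The heart of the argument is the following claim, which I would establish as a separate lemma: for every pair $z \leq z'$ in $\widetilde{W}$,
\[
 \bigl|r(z') - r(z)\bigr| \leq 2\bigl(\cl(z')-\cl(z)\bigr),
\]
where for $z = t^{z_0}w$ I set $r(z) := \#\{i \in \{1,\ldots,g\} \colon w(i)=i\}$, which agrees with the $p$-rank on $\Adm$ by Corollary \ref{relationship}. To prove the claim I would pick a chain of covers $z = z_0 \lessdot z_1 \lessdot \cdots \lessdot z_k = z'$ of length $k = \cl(z')-\cl(z)$ in the Bruhat order; each cover is of the form $z_i = t_i z_{i-1}$ for a reflection $t_i = s_\alpha \in \widetilde{W}$. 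Since the translation component of $s_\alpha = t^{n\alpha_0^\vee} s_{\alpha_0}$ has no bearing on $r$, it suffices to show that left multiplication of an element $w = u\sigma \in \FF_2^g \rtimes S_g$ by any of the three types of finite reflection in $W$, namely $s_{\beta^1_{ij}} = (0,(i,j))$, $s_{\beta^2_{ij}} = (e_i+e_j,(i,j))$ and $s_{\beta^3_i} = (e_i,\id)$, alters the set of fixed points of $w$ inside $\{1,\ldots,g\}$ only at the two indices $i,j$ (or at $i$ alone in the third case). This yields $|r(z_i)-r(z_{i-1})| \leq 2$ per cover, from which the claim follows by the triangle inequality.

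With the claim in hand the proof collapses to the one-line estimate
\[
 u = r(y) - r(x) \leq 2\bigl(\cl(y)-\cl(x)\bigr) \leq 2 \cdot \frac{u-1}{2} = u-1,
\]
which is absurd. Notice that the degenerate case $u=1$ is precisely the bare dimension inequality $\cl(y) \leq \cl(x) < \cl(y)$, with no Bruhat step available. The only nontrivial ingredient is the per-cover bound, which I expect to be the main obstacle; however, because each of the three finite reflections above only touches two coordinates among $\{1,\ldots,g\}$, this reduces to a short case-analysis (split according to whether $\sigma$ fixes $i$ and $j$, exchanges them, or moves each to a third index) and the bound of two drops out uniformly.
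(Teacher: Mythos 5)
Your argument is correct and is essentially the paper's own proof: both rest on the dimension formula of Theorem \ref{dim} together with the observation that each reflection in a Bruhat chain from $x$ to $y$ changes the number of fixed points in $\{1,\ldots,g\}$ by at most two, yielding the same contradiction $u \leq u-1$ (the paper phrases it as $k \geq \frac{u+1}{2}$ forcing $\cl(y)$ to exceed $\dim \cA_I^{(d+u)}$). Your case analysis of the three root types merely makes explicit the per-cover bound that the paper asserts in one line.
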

  \begin{proof}
  Assume the contrary: Then there exist $x \in \Adm^{(d)}$ of maximal length and $y \in \Adm^{(d+u)}$ with $x \leq y$, i.e.\ we can find reflections $s^{(1)}, \ldots , s^{(k)}$ such that $y = s^{(k)} \cdot \ldots \cdot s^{(1)} \cdot x$ and $\cl (s^{(l+1)} \cdot \ldots \cdot s^{(1)} \cdot x) = \cl (s^{(l)} \cdot \ldots \cdot s^{(1)} \cdot x) +1$ for every $1\leq l < k$. But with each of these reflections we gain at most two additional fixed points. Thus we get $k \geq \frac{u+1}{2}$ and
  \[
   \cl(y) = \cl (x) + k \geq \frac{g^2+d}{2} + \frac{u+1}{2} = \left\lfloor \frac{g^2 + d+ u}{2} \right\rfloor  + 1,
  \]
  contradicting Theorem \ref{dim}.
 \end{proof}

 \noindent This finishes the proof of Theorem \ref{prank}\teil{1-2}.
 \end{proof}
 
 In order to prove the third part, we need the following lemma. 
 
 \begin{lemma} \label{reduced decomposition}
  Let $w\in W$ and $t^{w(\mu)} = s^{(\frac{g(g+1)}{2})}\cdot \cdots \cdot s^{(2)} \cdot s^{(1)} \cdot \tau$ a reduced decomposition. Then $\# \{j;\, s^{(j)} = s_0 \textnormal{ or } s^{(j)}=s_g\} = g$. 
 \end{lemma}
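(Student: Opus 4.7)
My plan is to reformulate the claim in purely Coxeter-theoretic terms and then apply the Iwahori--Matsumoto formula. Writing $x := t^{w(\mu)}\tau^{-1}$, which lies in $W_a$ since $t^{w(\mu)} \in \Adm \subset W_a\tau$, the given reduced decomposition of $t^{w(\mu)}$ translates into a reduced decomposition $x = s^{(g(g+1)/2)}\cdots s^{(1)}$ in the Coxeter system of affine type $\widetilde{C}_g$. Letting $N_0(x)$ and $N_g(x)$ denote the number of indices $j$ with $s^{(j)} = s_0$ and $s^{(j)} = s_g$ respectively, the claim amounts to $N_0(x) + N_g(x) = g$. For brevity I write $s_i := s_{i,i+1}$ for $1 \leq i \leq g-1$.

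First I would check that $N_0$ and $N_g$ are invariants of $x$. By Matsumoto's theorem any two reduced expressions are linked by braid moves, so I examine the braid relations: the length-three moves $s_is_{i+1}s_i = s_{i+1}s_is_{i+1}$ with $1 \leq i \leq g-2$ involve neither $s_0$ nor $s_g$, while the two length-four moves $s_0s_1s_0s_1 = s_1s_0s_1s_0$ and $s_{g-1}s_gs_{g-1}s_g = s_gs_{g-1}s_gs_{g-1}$ preserve the count of every generator on both sides. Commutation moves preserve all counts trivially. Hence $N_0(x)$ and $N_g(x)$ depend only on $x$.

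Next I would identify $N_0(x) + N_g(x)$ geometrically. A reduced expression of $x$ corresponds to a minimal gallery from the base alcove to $x \cdot A_0 = t^{w(\mu)} \cdot A_0$, and the simple reflection used at step $j$ is $W_a$-conjugate to the reflection in the wall $H_{\beta,m}$ crossed at that step. Two such walls lie in the same $W_a$-orbit iff the finite roots $\beta$ belong to the same $W$-orbit, and the root system of $\GSp_{2g}$ has exactly two such orbits: the long roots $\{\pm \beta_i^3\}_{i=1,\ldots,g}$ and the short roots $\{\pm \beta_{i,j}^1, \pm \beta_{i,j}^2\}$. Since $s_g$ is the reflection in the long simple root $\beta_g^3$ and $s_0$ is the affine reflection attached to the highest root $\widetilde{\alpha} = \beta_1^3$ (which is also long), while $s_1,\ldots,s_{g-1}$ are the reflections in short simple roots, $N_0(x) + N_g(x)$ equals the number of long-type walls crossed by the gallery. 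By Lemma \ref{IMF} applied to $t^{w(\mu)}$, whose $W$-component is trivial, this number equals the long-root contribution to the length formula:
\[
  N_0(x) + N_g(x) \;=\; \sum_{i=1}^g \bigl|\langle \beta_i^3, w(\mu) \rangle\bigr|.
\]

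Finally I evaluate the sum. As $w \in W \subset S_{2g}$ permutes the coordinates of $\mu = (1^{(g)}\,0^{(g)})$, each entry $w(\mu)(i)$ lies in $\{0,1\}$ and satisfies $w(\mu)(i) + w(\mu)(2g+1-i) = 1$; hence $\langle \beta_i^3, w(\mu)\rangle = 2w(\mu)(i) - 1 \in \{-1,1\}$ for every $i$, and the sum equals $g$. The main obstacle will be the third step: cleanly justifying that the count of long-type walls crossed in a minimal gallery coincides with the long-root part of the Iwahori--Matsumoto formula. This rests on identifying the $W_a$-orbits of affine walls with the $W$-orbits of finite roots in type $C_g$, together with the standard fact that for each finite positive root $\beta$, the walls of the form $H_{\beta,m}$ separating the base alcove from $t^y\cdot A_0$ number exactly $|\langle \beta, y\rangle|$.
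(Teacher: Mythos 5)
Your argument is correct, but it takes a genuinely different route from the one in the paper. You compute the number of occurrences of $s_0$ and $s_g$ geometrically: a reduced word for $t^{w(\mu)}\tau^{-1}$ traces a minimal gallery from the base alcove to $t^{w(\mu)}\cdot A_0$, the wall crossed at step $j$ is $W_a$-conjugate to the base-alcove wall fixed by $s^{(j)}$ and hence has a long underlying finite root exactly when $s^{(j)}\in\{s_0,s_g\}$, and the root-by-root refinement of the Iwahori--Matsumoto formula counts the long walls separating the two alcoves as $\sum_{i=1}^g |\langle\beta_i^3,w(\mu)\rangle| = \sum_{i=1}^g|2w(\mu)(i)-1| = g$. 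The paper instead argues entirely inside $W=\FF_2^g\rtimes S_g$: writing $a$ for the number of factors equal to $s_0$ or $s_g$ and $b$ for the rest, one has $a+b=\cl(t^{w(\mu)})=\frac{g(g+1)}{2}$; each factor $s_0$ or $s_g$ flips exactly one coordinate of the $\FF_2^g$-component, which must pass from $(1^{(g)})$ for $\tau$ to $(0^{(g)})$ for $t^{w(\mu)}$, giving $a\geq g$, while each $s_{i,i+1}$ multiplies the $S_g$-component by an adjacent transposition, which must pass from the longest element of $S_g$ to $\id$, giving $b\geq\frac{g(g-1)}{2}$; the total then forces equality. The paper's argument is shorter, more elementary, and its two counting observations are reused directly in the proof of Theorem \ref{prank}\teil{3}; yours is more conceptual and generalizes immediately to counting, in a reduced word for any translation element, the reflections belonging to each wall type.

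Two small remarks on your write-up. First, the invariance of $N_0+N_g$ under braid moves is correct but not needed: the gallery argument applies to the given reduced expression directly. Second, your assertion that two walls lie in the same $W_a$-orbit if and only if their finite roots lie in the same $W$-orbit is false in type $C_g$: the long walls $H_{\beta,m}$ split into two $W_a$-orbits according to the parity of $m$, which is precisely why $s_0$ and $s_g$ are not conjugate in $W_a$. This does not damage the proof, since you only use the implication that $W_a$-conjugate walls have finite roots of the same length, but the ``iff'' should be weakened accordingly.
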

 
 \begin{proof}
  We denote
  \begin{eqnarray*}
   a &:=& \#\{j;\, s^{(j)} = s_0 \textnormal{ or } s^{(j)} = s_g\} \\
   b &:=& \#\{j;\, s^{(j)} = s_{i,i+1} \textnormal{ for some } i\}
  \end{eqnarray*}
 Then $a+b=\cl(t^{w(\mu)}) = \frac{g(g+1)}{2}$.
 
 Recall the description of the $s_i$ in subsection \ref{GSp1}. (Left) multiplication with $s_0$ or $s_g$ changes exactly one coordinate of the $\FF_2^g$-component, multiplication with $s_{i,i+1}$ only permutes its coordinates; in particular it does not change the number of coordinates which are zero resp.\ one. Since the $\FF_2^g$-component of $t^{w(\mu)}$ resp.\ $\tau$ is $(0^{(g)})$ resp.\ $(1^{(g)})$ we get $a \geq g$.
 
 On the other hand, multiplication with $s_0$ or $s_g$ does not change the $S_g$-component and multiplication with $s_{i,i+1}$ induces a multiplication with an adjacent transposition on the $S_g$-component. Since the $S_g$-component of $t^{w(\mu)}$ resp.\ $\tau$ is $\id$ resp.\ $(1\enspace g)(2\enspace g-1)\cdots$ we get $b \geq \cl( (1\enspace g)(2\enspace g-1)\cdots )= \frac{g(g-1)}{2}$, proving our claim. 
 \end{proof}

 \begin{proof}[Proof of Theorem \ref{prank}\teil{3}]
  Recall that for $x,y \in \widetilde{W}$, we have $x \leq y$ if and only if $x$ can be realized as a subsequence of a reduced decomposition of $y$. We know that an element $y = t^{y_0}\cdot (u'\sigma')\in\Adm$ is contained in $\Adm^{(g-1)}$ if and only if $u' = (0^{(i-1)}\, 1\, 0^{(2g-i)})$ for some $i$ and $\sigma' = \id$. Using Lemma \ref{reduced decomposition} and the observations made in its proof, we see that this is equivalent to the claim that $y$ can be realised by removing one $s_0$ or $s_g$ from a reduced decomposition of $t^{w(\mu)}$ for some $w\in W$. Thus $x = t^{x_0}(u\sigma)\in\Adm$ is dominated by an element $y\in\Adm^{(g-1)}$ if and only if $x$ can be realized as a subsequence of a reduced decomposition of some $t^{w(\mu)}$ with at most $g-1$ reflections equal to $s_0$ or $s_g$. In particular we get that $u \not= 0$.
  
 Now let $x=t^{x_0}\cdot(u\sigma)$ with $u \not= 0$. By repeatedly applying Lemma \ref{GY82} and Lemma \ref{going up}\teil{1}, we may assume that $u(i_0)=1$ for exactly one integer $i_0$. Then $i_0 \not\in \F(x)$, so it occurs in a unique cycle $Z = (i_0 \, \cdots\, i_n)$ of $x$. We proceed by induction on $\ord Z$.
 
  If $\ord Z =1$, then $x$ is possibly maximal. By subsequently applying Lemma \mbox{\ref{going up}\teil{2-3}} we get a possibly maximal element $y$ that dominates $x$ such that $\Z(y) = \{(i_0)\}$ and thus $y \in \Adm^{(g-1)}$.
  
  If $\ord Z \geq 2$, let $\widetilde{x} = t^{\widetilde{x}_0}\cdot\widetilde{w} = t^{\widetilde{x}_0}\cdot (\widetilde{u} \widetilde{\sigma}):=s_{i_0, i_1} \cdot x$. Then
  \begin{eqnarray*} 
   \widetilde{u}(j) &=& \left\{\begin{array}{ll} 0 & j \not= i_1 \\ 1 & j=i_1 \end{array}\right. \\
   \F(\widetilde{x}) &=& \F(x) \cup \{i_0\} \\
   \Z(\widetilde{x}) &=& \Z(x) \setminus \{Z\} \cup \{Z'\}
  \end{eqnarray*}
  where $Z' = (i_1\,\cdots\, i_n)$, in particular $\ord Z' = \ord Z -1$. To prove that $\widetilde{x}$ is $\mu$-admissible, it suffices to check the criterion of Lemma \ref{mu-adm} at the coordinate $i_1$. Indeed, we have $\widetilde{x_0}(i_1) = x_0(i_0) = 1$ and $\widetilde{w}^{-1}(i_1) > g \geq i_1$ because of $\widetilde{u}(i_1) = 1$. Now $x\leq \widetilde{x}$ by Corollary \ref{lastminute}. Using the induction hypothesis we find that there exists a $y\in\Adm^{(g-1)}$ such that $x\leq\widetilde{x}\leq y$.
 \end{proof}

\end{document}